\documentclass[reqno]{amsart}

\usepackage[T1]{fontenc}
\usepackage{amsmath}
\usepackage{amssymb}
\usepackage{amsthm}
\usepackage{hyperref}
\usepackage{fancyhdr}
\usepackage[textsize=small]{todonotes}
\usepackage{enumitem}
\usepackage{comment}
\usepackage{nicefrac}
\usepackage{bm}
\usepackage{mathrsfs}
\usepackage{graphicx}
\usepackage[utf8]{inputenc}
\usepackage{cancel}
\usepackage{mathtools}
\usepackage{tikz}
\usetikzlibrary{cd,decorations.pathreplacing,matrix,arrows,positioning,automata,shapes,shadows,calc,fadings,decorations,snakes,through,intersections}
\usepackage{float}

\theoremstyle{plain}
\newtheorem{theorem}{Theorem}[section]
\newtheorem{lemma}[theorem]{Lemma}
\newtheorem{corollary}[theorem]{Corollary}
\newtheorem{proposition}[theorem]{Proposition}

\theoremstyle{definition}
\newtheorem{definition}[theorem]{Definition}

\newtheorem{example}[theorem]{Example}
\newtheorem{remark}[theorem]{Remark}

\newtheorem{question}[theorem]{Question}

\newtheorem{claim}{Claim}

\newcommand{\bN}{\mathbb{N}}
\newcommand{\N}{\bN}

\newcommand{\cA}{\mathcal{A}}
\newcommand{\cB}{\mathcal{B}}

\newcommand{\cI}{\mathcal{I}}
\newcommand{\I}{\cI}
\newcommand{\cJ}{\mathcal{J}}
\newcommand{\J}{\cJ}

\newcommand{\fin}{\mathrm{Fin}}
\newcommand{\Fin}{\fin}

\begin{document}


\title{Characterized subgroups on the unit circle}


\author[R.~Filip\'{o}w]{Rafa\l{} Filip\'{o}w}
\address[Rafa\l{}~Filip\'{o}w]{Institute of Mathematics\\ Faculty of Mathematics, Physics and Informatics\\ University of Gda\'{n}sk\\ ul.~Wita Stwosza 57\\ 80-308 Gda\'{n}sk\\ Poland}
\email{Rafal.Filipow@ug.edu.pl}
\urladdr{\url{http://mat.ug.edu.pl/~rfilipow}}

\author[A.~Kwela]{Adam Kwela}
\address[Adam Kwela]{Institute of Mathematics\\ Faculty of Mathematics\\ Physics and Informatics\\ University of Gda\'{n}sk\\ ul.~Wita  Stwosza 57\\ 80-308 Gda\'{n}sk\\ Poland}
\email{Adam.Kwela@ug.edu.pl}
\urladdr{\url{https://mat.ug.edu.pl/~akwela}}

\author[P.~Leonetti]{Paolo Leonetti}
\address[Paolo Leonetti]{Department of Economics\\ Universit\'{a} degli Studi dell’Insubria\\ via Monte Generoso 71 \\ Varese 21100\\ Italy}
\email{leonetti.paolo@gmail.com}
\urladdr{\url{https://sites.google.com/site/leonettipaolo}}

\author[J.~Tryba]{Jacek Tryba}
\address[Jacek Tryba]{Institute of Mathematics\\ Faculty of Mathematics\\ Physics and Informatics\\ University of Gda\'{n}sk\\ ul.~Wita  Stwosza 57\\ 80-308 Gda\'{n}sk\\ Poland}
\email{Jacek.Tryba@ug.edu.pl}




\subjclass[2020]{
Primary: 54H11, 
03E15;          
Secondary: 22A05, 
40A35,          
43A40,          
11K06           
}




\keywords{characterized subgroups; 
ideal convergence; 
statistically characterized subgroups; 
analytic ideals; 
Rudin--Keisler order;
Rudin--Blass order;
Katětov order;
circle group}


\begin{abstract}
\noindent Given an ideal $\mathcal I$ on $\omega$, a subgroup $H$ of the unit circle $\mathbb{T}$ is said to be $\mathcal{I}$-characterized if there exists an integer sequence $\bm{a}=(a_n: n \in \omega)$ such that
$$
H=\mathsf H_{\bm a}(\mathcal I):=
\left\{x\in\mathbb T:\mathcal I\text{-}\lim_{n\to \infty} a_nx=0\right\}.
$$
We also consider the corresponding \(\mathcal I^\star\)-version. We provide upper bounds for the
topological complexities of those subgroups in terms of the complexity of $\mathcal I$. 

Moreover, we prove that Rudin--Keisler and Rudin--Blass reductions between ideals induce inclusions between the corresponding families of characterized subgroups. As a
consequence, every characterized subgroup, and in particular every countable
subgroup of $\mathbb T$, is $\mathcal I$-characterized for every meager
ideal $\mathcal I$. 
We also show that if the image of $(a_n: n \in \omega)$ contains arbitrarily large intervals, then every subgroup of $\mathbb{T}$ can be written as $\mathsf H_{\bm a}(\mathcal J)$ for some ideal $\mathcal{J}=\mathcal{J}_{H,\bm{a}}$. 
We analyze the descriptive complexity and $P$-properties of these ideals. 

Finally, we study when the equality $\mathsf H_{\bm a}(\mathcal I)=\mathbb T$ forces $\mathrm{supp}(\bm a)\in\mathcal I$. 
We prove this for a class of ideals satisfying a Katětov-type condition involving $\mathcal{ED}$, including nowhere tall ideals as well as the ideals $\mathsf{nwd}$ and $\mathsf{null}$. 
We also obtain non-inclusion results between families of $\mathcal{I}$-characterized subgroups: for instance, we show that if the ideal $\mathcal{I}$ is tall and translation invariant then the subgroup $\mathsf H_{(2^n)}(\mathcal I)$ cannot be characterized.

We use our results to answer several open problems posed in the literature. 
\end{abstract}


\maketitle



\thispagestyle{empty}

\section{Introduction}\label{sec:intro}

Let $\mathbb{T}:=\mathbb{R}/\mathbb{Z}$ be the unit circle. By a usual abuse of notation, we identify each real $x \in \mathbb{R}$ with its equivalence class $x+\mathbb{Z}$, and we write $\|x\|:=\min\{|x-z|:z \in \mathbb{Z}\}$. In particular, $\mathbb{T}$ is a compact Polish group. 
Let us recall that a map $\chi: \mathbb{T}\to \mathbb{T}$ is a character (that is, a continuous homomorphism) if and only if there exists $a \in \mathbb{Z}$ such that $\chi(x):=ax$ for all $x \in \mathbb{T}$. In particular, a sequence of characters $(\chi_n: n\in \omega)$ can be uniquely represented by a sequence of integers $\bm{a}=(a_n: n \in \omega) \in \mathbb{Z}^\omega$. 

Let $\mathcal{I}$ be an ideal on the nonnegative integers, that is, a family of subsets of $\omega$ stable under finite unions and subsets. Unless otherwise stated, it is always assumed that the family of finite sets $\mathrm{Fin}:=[\omega]^{<\omega}$ is contained in $\mathcal{I}$, and that $\mathcal{I}$ is proper, that is, $\omega\notin \mathcal{I}$. Set  $\mathcal{I}^+:=\mathcal{P}(\omega)\setminus \mathcal{I}$. In place of $\omega$, we will consider also ideals on countably infinite sets. 
Among important examples of ideals are the family $\mathcal{Z}$ of asymptotic density zero sets and maximal ideals (that is, the complements of free ultrafilters on $\omega$). 
An ideal $\mathcal{I}$ is said to be a $P$-ideal if for every family $\{S_n: n \in \omega\}$ of sets in $\mathcal{I}$ there exists $S \in \mathcal{I}$ such that $S_n\setminus S$ is finite for all $n \in \omega$. 
Identifying $\mathcal{P}(\omega)$ with the Cantor space, we can speak about the topological complexity of ideals. 
For instance, $\mathcal{Z}$ is an $F_{\sigma\delta}$ $P$-ideal which is not $F_\sigma$. 
We refer the reader to \cite{MR1711328} for an excellent textbook on the theory of ideals on $\omega$.

Let us recall that a sequence $\bm{x}=(x_n: n \in \omega) \in \mathbb{T}^\omega$ is $\mathcal{I}$-convergent to $\eta \in \mathbb{T}$, shortened as $\mathcal{I}\text{-}\lim \bm{x}=\eta$ or $\mathcal{I}\text{-}\lim_n x_n=\eta$, if $\{n \in \omega: \|x_n-\eta\|\ge \varepsilon\} \in \mathcal{I}$ for all $\varepsilon>0$. Of course, $\mathrm{Fin}$-convergence coincides with ordinary convergence. Given a sequence of integers $\bm{a}=(a_n: n \in \omega)$, define the subgroup 
\begin{equation}\label{eq:defHaI}
\mathsf{H}_{\bm{a}}(\mathcal{I}):=\left\{x \in \mathbb{T}: \mathcal{I}\text{-}\lim_{n\to \infty} a_nx=0\right\}.
\end{equation}
Note that the above definition can be extended along the same lines by replacing the circle group $\mathbb{T}$ with an arbitrary topological group $X$, and the sequence of integers $(a_n:n\in\omega)$ with a sequence of characters $(\chi_n:n\in\omega)$ from $X$ to $\mathbb{T}$. In this work, we restrict ourselves to the circle group mainly for the sake of simplicity of exposition. 

It is readily seen that, if $\bm{q}=(q_n: n \in \omega)$ is an increasing sequence of positive integers, then $\mathsf{H}_{\bm{q}}(\mathrm{Fin})$ might be interpreted as the family of reals with a sufficiently good rational approximation along the prescribed sequence of denominators $(q_n: n \in \omega)$. In fact, pick $x \in \mathbb{T}$ and, for each $n \in \omega$, let $p_{n}$ be an integer which minimizes $\|x-\frac{p_n}{q_n}\|$. Then 
$$
x \in \mathsf{H}_{\bm{q}}(\mathrm{Fin})
\quad\text{ if and only if }\quad 
\left\|x-\frac{p_n}{q_n}\right\|=o\left(\frac{1}{q_n}\right) \text{ as }n\to \infty.
$$
\begin{definition}\label{def:Ichar}
    Let $\mathcal{I}$ be an ideal on $\omega$. A subgroup $H$ of $\mathbb{T}$ is said to be $\mathcal{I}$\emph{-characterized} if there exists a sequence of integers $\bm{a}=(a_n: n \in \omega)$ such that 
    $$
    H=\mathsf{H}_{\bm{a}}(\mathcal{I}).
    $$ 
    In the case $\mathcal{I}=\mathrm{Fin}$, the subgroup $H$ is said to be \emph{characterized}. In addition, we denote the family of $\mathcal{I}$-characterized subgroups by 
    $$
    \mathscr{H}(\mathcal{I}):=\{\mathsf{H}_{\bm{a}}(\mathcal{I}): \bm{a} \in \mathbb{Z}^\omega\}.
    $$
\end{definition}

The study of characterized subgroups is rooted in the broader problem of understanding how the algebraic structure of a topological group is reflected by convergence phenomena. 
Already in the classical theory of topological groups, topologically torsion elements were introduced independently by Braconnier \cite{MR13158} and Vilenkin \cite{MR14104} and played an important role in the structure theory of locally compact abelian groups and profinite groups. 
From this point of view, a characterized subgroup of $\mathbb T$ is not an ad hoc object: it is the subgroup of elements which are ``topologically torsion'' with respect to a prescribed sequence of characters, cf. \cite{MR346087}. 
Thus the condition $\lim_n a_nx=0$, which goes in the opposite direction of equidistribution-type results, gives a concrete way to test algebraic information by asymptotic behaviour in the dual group. 
This makes the class of characterized subgroups rigid enough to have strong structural and regularity properties, but flexible enough to contain many nontrivial examples: 
for instance, it was proved by Erd\H{o}s and Taylor \cite{MR92032} that if a positive sequence $\bm{a}$ satisfies $\sup_n a_{n+1}/a_n<\infty$ then $\mathsf{H}_{\bm{a}}(\mathrm{Fin})$ is countable; vice versa, 
B\'ir\'o, Deshouillers and S\'os \cite{MR1877772} 
showed that every countable subgroup of $\mathbb{T}$ is characterized. 
Hence they provide a natural meeting point between harmonic analysis, Diophantine approximation, and descriptive set theory. 
Lastly, it is worth remarking that alternative constructions of subgroups which involve ideals on $\omega$ in the same spirit of \eqref{eq:defHaI} are known to be fruitful, see e.g. Farah and Solecki \cite[p. 516]{MR2189217}, Solecki \cite[Section 3]{MR2197115}, and Hu and Solecki \cite[Section 2]{SoleckiHu}. 
We refer the reader to \cite{MR3864800, MR2032835, MR4932230, MR3145818, MR3288121, MR2921827} for detailed surveys on the history and motivations on characterized subgroups and their ideal version. 

In a similar direction, recall that a sequence $\bm{x}=(x_n: n \in \omega) \in \mathbb{T}^\omega$ is $\mathcal{I}^\star$-convergent to $\eta\in \mathbb{T}$, shortened as $\mathcal{I}^\star\text{-}\lim \bm{x}=\eta$ or $\mathcal{I}^\star\text{-}\lim_n x_n=\eta$, if there exists $S \in\mathcal{I}$ such that $(x_n: n \in \omega\setminus S)$ is convergent (in the ordinary sense) to $\eta$. Mimicking the definition in \eqref{eq:defHaI}, define also the subgroup
$$
\mathsf{H}^\star_{\bm{a}}(\mathcal{I}):=
\left\{x \in \mathbb{T}: \mathcal{I}^\star\text{-}\lim_{n\to \infty} a_nx=0\right\}.
$$

\begin{definition}\label{def:Istarchar}
    Let $\mathcal{I}$ be an ideal on $\omega$. A subgroup $H$ of $\mathbb{T}$ is said to be $\mathcal{I}^\star$\emph{-characterized} if there exists a sequence of integers $\bm{a}=(a_n: n \in \omega)$ such that  
    $$
    H=\mathsf{H}^\star_{\bm{a}}(\mathcal{I}).
    $$ 
    We denote the family of $\mathcal{I}^\star$-characterized subgroups by 
    $$
    \mathscr{H}^\star(\mathcal{I}):=\{\mathsf{H}^\star_{\bm{a}}(\mathcal{I}): \bm{a} \in \mathbb{Z}^\omega\}.
    $$
\end{definition}

Since $\mathcal{I}^\star$-convergence implies $\mathcal{I}$-convergence, we obtain that $\mathsf{H}^\star_{\bm{a}}(\mathcal{I})$ is a subgroup of $\mathsf{H}_{\bm{a}}(\mathcal{I})$. 
On the other hand, if $\mathcal{I}$ is a $P$-ideal, then $\mathcal{I}^\star$-convergence coincides with $\mathcal{I}$-convergence, so  $\mathsf{H}^\star_{\bm{a}}(\mathcal{I})= \mathsf{H}_{\bm{a}}(\mathcal{I})$, see e.g. \cite[Theorem 2.4(iii)]{MR3920799}. 

Some instances of our main results follow below: 
\begin{enumerate}[label={\rm (\arabic*)}]
\item [(a)] $\mathcal{I}$-characterized subgroups coincide with the corresponding  $\mathcal{I}^\star$-characterized subgroups if and only if $\mathcal{I}$ is a $P$-ideal (see Theorem \ref{thm:Pproperty});
\item [(b)] $\mathcal{I}$-characterized subgroups are Borel whenever $\mathcal{I}$ is Borel (see Theorem \ref{thm:upperboundscomplexity}); 
\item [(c)] if $\mathcal{I}$ is meager, then every characterized subgroup is also $\mathcal{I}$-characterized (see Corollary \ref{RB:corollary}); 
\item [(d)] If $\bm{a}$ is a sequence of naturals whose image contains arbitrarily large intervals, then every subgroup is $\mathcal{I}$-characterized through the same sequence $\bm{a}$ for some ideal $\mathcal{I}$ (see Theorem \ref{thm:beigblock_representation}); 
\item [(e)] If $\mathcal{I}$ is a nowhere tall ideal, then $\mathsf H_{\bm a}(\mathcal I)=\mathbb T$ if and only if
$\operatorname{supp}(\bm a)\in\mathcal I$ (see Corollary \ref{cor:1P^-A}); 
\item [(f)] For every tall translation invariant ideal $\mathcal{I}$ there exists an $\mathcal{I}$-characterized subgroup which cannot be characterized (see Corollary \ref{cor:Hpowernotfin}).
\end{enumerate}

The paper is organized as follows. In Section \ref{sec:main} we present a summary of our main results (without proofs) and use them to answer several open problems posed in the literature. Section \ref{sec:preliminaries} is devoted to some preliminary results. The proofs of our main results can be found in Section \ref{sec:proofs}.

\section{Main results}\label{sec:main}

\subsection{Preliminary results}

Hereafter, we will consider the \textquotedblleft universal\textquotedblright\, sequence 
$$
\bm{u}=(u_n: n\in \omega)
\quad \text{ defined by }\quad 
u_n:=n \text{ for all }n \in \omega.
$$ 
As mentioned in Section \ref{sec:intro}, we will show in Theorem \ref{thm:beigblock_representation} below that \emph{every} subgroup of $\mathbb{T}$ can be represented as $\mathsf{H}_{\bm{u}}(\mathcal{I})$ for some ideal $\mathcal{I}$ on $\omega$, cf. also \cite[Theorem 2.1]{MR2227021}. On the other hand, with the exception of the trivial subgroup $\{0\}$, Example \ref{example:singletonzero} shows that such ideals cannot be contained in $\mathcal{Z}$, cf. Lemma \ref{lem:basicIH}\ref{item:4basiclemma}.
\begin{example}\label{example:singletonzero}
    Pick an ideal $\mathcal{I}$ on $\omega$ such that $\mathcal{I}\subseteq \mathcal{Z}$. Then 
    $
    \mathsf{H}^\star_{\bm{u}}(\mathcal{I})=\mathsf{H}_{\bm{u}}(\mathcal{I})=\{0\}.
    $ 
    
    Indeed, taking into account that $\mathsf{H}^\star_{\bm{u}}(\mathcal{I})\le \mathsf{H}_{\bm{u}}(\mathcal{I})\le \mathsf{H}_{\bm{u}}(\mathcal{Z})$, it is enough to show that $\mathsf{H}_{\bm{u}}(\mathcal{Z})=\{0\}$. 
    To this aim, observe that if $x=\nicefrac{p}{q} \in \mathbb{T}$ is rational with $\mathrm{gcd}(p,q)=1$ and $q\ge 2$, then $\{n \in \omega: \|nx\|\ge \nicefrac{1}{q}\}$ admits positive asymptotic density $1-\nicefrac{1}{q}$. At the same time, if $x\in \mathbb{T}$ is irrational then $(\|nx\|: n \in \omega)$ is equidistributed in $\mathbb{T}$ by the Kronecker--Weyl theorem, hence not $\mathcal{Z}$-convergent to $0$. Therefore $\mathsf{H}_{\bm{u}}(\mathcal{Z})=\{0\}$. 
\end{example}

Before we proceed to our main results, some remarks are in order. Let us recall that $\eta \in \mathbb{T}$ is an $\mathcal{I}$\emph{-cluster point} of a sequence $\bm{x}=(x_n: n \in \omega) \in \mathbb{T}^\omega$ if $\{n \in \omega: \|x_n-\eta\|\le  \varepsilon\} \notin \mathcal{I}$ for all $\varepsilon>0$. The set of $\mathcal{I}$-cluster points of $\bm{x}$ is denoted by $\Gamma_{\bm{x}}(\mathcal{I})$. Accordingly, it is easy to see that for each sequence of integers $\bm{a}=(a_n: n \in \omega) \in \mathbb{Z}^\omega$ we have
    $$
    \mathsf{H}_{\bm{a}}(\mathcal{I})
    =\{x \in \mathbb{T}: \Gamma_{\bm{a}x}(\mathcal{I})\subseteq \{0\}\}
    =\{x \in \mathbb{T}: \Gamma_{\bm{a}x}(\mathcal{I})= \{0\}\},
    $$
    where $\bm{a}x:=(a_nx: n \in \omega)$; cf. \cite[Lemma 3.1(vi) and Lemma 3.3]{MR3920799}.

However, an analogous representation does not hold for $\mathcal{I}$-limit points; here we recall that $\eta \in \mathbb{T}$ is an $\mathcal{I}$\emph{-limit point} of a sequence $\bm{x} \in \mathbb{T}^\omega$ if there exists $S \in \mathcal{I}^+$ such that $\lim_{n \in  S}x_n=\eta$. The set of $\mathcal{I}$-limit points of $\bm{x}$ is denoted by $\Lambda_{\bm{x}}(\mathcal{I})$. In the example below, we show that the analogue subsets of $\mathcal{I}$-limit points might not be subgroups. 

\begin{example}\label{example:Ilimitpoints} 
    Given a sequence $\bm{a}$ that we will define below, set 
    $$
    H_1:=\{x \in \mathbb{T}: \Lambda_{\bm{a}x}(\mathcal{Z})\subseteq \{0\}\}
    \quad \text{ and }\quad 
    H_2:=\{x \in \mathbb{T}: \Lambda_{\bm{a}x}(\mathcal{Z})= \{0\}\}.
    $$
    Let $(d_k: k \in \omega)$ be a uniformly distributed sequence in $\mathbb{T}$, pick a nonzero $c \in \mathbb{T}$ and define $v_{2k}:=d_k$, $w_{2k}:=d_k-c$, and $v_{2k+1}:=w_{2k+1}:=0$ for all $k \in \omega$. Then it is easy to check that $\Lambda_{\bm{v}}(\mathcal{Z})=\Lambda_{\bm{w}}(\mathcal{Z})=\{0\}$, cf. \cite[Example 4]{MR1181163}. Now, pick $x,y \in \mathbb{T}$ such that $\{1,x,y\}$ is $\mathbb{Q}$-independent. By the Kronecker--Weyl theorem, the sequence $((nx,ny): n \in \omega)$ is dense in $\mathbb{T}^2$. Hence there is a strictly increasing sequence $\bm{a} \in \omega^\omega$ such that 
    $$
    \|a_nx-v_n\|<2^{-n} 
    \quad \text{ and }\quad 
    \|a_ny-w_n\|<2^{-n}
    $$
    for all $n \in \omega$. 
    It follows from \cite[Lemma 3.5(ii)]{MR3920799} that $\Lambda_{\bm{a}x}(\mathcal{Z})=\Lambda_{\bm{a}y}(\mathcal{Z})=\{0\}$. Taking into account that $\bm{v}-\bm{w}=(c,0,c,0,\ldots)$, we obtain by the same reasoning that $\Lambda_{\bm{a}(x-y)}(\mathcal{Z})=\Lambda_{\bm{v}-\bm{w}}(\mathcal{Z})=\{0,c\}$. Therefore both $H_1$ and $H_2$ are not subgroups of $\mathbb{T}$.
\end{example}


\subsection{A characterization of the $P$-property} 
In our first result, we prove that the equality between the subgroups $\mathsf{H}^\star_{\bm{a}}(\mathcal{I})$ and $\mathsf{H}_{\bm{a}}(\mathcal{I})$ for every integer sequence $\bm{a}$ characterizes the $P$-property of the ideal $\mathcal{I}$.
\begin{theorem}\label{thm:Pproperty}
    Let $\mathcal{I}$ be an ideal on $\omega$. 
    Then the following are equivalent\textup{:}
    \begin{enumerate}[label={\rm (\roman*)}]
    \item\label{item:01Pidealchar} $\mathcal{I}$ is a $P$-ideal\textup{;}
    \item\label{item:02Pidealchar} $\mathsf{H}_{\bm{a}}(\mathcal{I})=\mathsf{H}^\star_{\bm{a}}(\mathcal{I})$ for all sequences $\bm{a} \in \mathbb{Z}^\omega$\textup{;}
    \item\label{item:03Pidealchar} $\mathsf{H}_{\bm{a}}(\mathcal{I})=\mathsf{H}^\star_{\bm{a}}(\mathcal{I})$ for some sequence $\bm{a} \in (\omega\setminus \{0\})^\omega$ such that $\lim_n a_{n}/a_{n+1}=0$\textup{.}
    \end{enumerate}
\end{theorem}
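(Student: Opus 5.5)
We prove the cycle \ref{item:01Pidealchar}$\Rightarrow$\ref{item:02Pidealchar}$\Rightarrow$\ref{item:03Pidealchar}$\Rightarrow$\ref{item:01Pidealchar}. The first implication is the standard fact, recalled in the Introduction, that $\mathcal{I}^\star$-convergence and $\mathcal{I}$-convergence coincide for $P$-ideals (cf. \cite[Theorem 2.4(iii)]{MR3920799}). The second is trivial once we exhibit a sequence with $\lim_n a_n/a_{n+1}=0$, for instance $a_n:=(n+1)!$. All the work is therefore in \ref{item:03Pidealchar}$\Rightarrow$\ref{item:01Pidealchar}, which we prove by contraposition.

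Assume $\mathcal{I}$ is not a $P$-ideal, and fix $\bm a\in(\omega\setminus\{0\})^\omega$ with $a_n/a_{n+1}\to 0$. We will construct $x\in\mathsf{H}_{\bm a}(\mathcal{I})\setminus\mathsf{H}^\star_{\bm a}(\mathcal{I})$. Since $\mathcal{I}$ is not a $P$-ideal, there is a partition $\{S_k:k\in\omega\}$ of $\omega$ with the following two properties. First, $S_k\in\mathcal I$ for every $k\ge 1$. Second, for every $S\in\mathcal I$ there is some $k$ with $S_k\setminus S$ infinite. To get this, take witnesses $A_k\in\mathcal I$ of the failure of the $P$-property, disjointify them, and let $S_0$ be the complement of their union. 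We may also discard finitely many points, using $\mathrm{Fin}\subseteq\mathcal I$.

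The goal is then to find $x$ whose orbit $\|a_nx\|$ has a prescribed shape: $a_nx\to 0$ along $S_0$, and $\|a_nx\|$ is close to a fixed value $\varepsilon_k$ on $S_k$ for $k\ge1$, where $\varepsilon_k\downarrow 0$ (say $\varepsilon_k=2^{-k-2}$).

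With this $x$, we check both claims.
\begin{itemize}
\item \emph{$x\in\mathsf{H}_{\bm a}(\mathcal I)$.} For each $\varepsilon>0$, the set $\{n:\|a_nx\|\ge\varepsilon\}$ is contained, up to a finite set, in a finite union of the sets $S_k$ with $k\ge1$, so it lies in $\mathcal I$.
\item \emph{$x\notin\mathsf{H}^\star_{\bm a}(\mathcal I)$.} If $a_nx\to0$ along $\omega\setminus S$ for some $S\in\mathcal I$, then each $S_k\setminus S$ would be finite, since $\|a_nx\|\approx\varepsilon_k>0$ on $S_k$. This contradicts the choice of the partition.
\end{itemize}

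The main obstacle is realizing the prescribed pattern, i.e.\ finding $x$ with $\|a_nx-t_n\|\to0$, where $t_n:=\varepsilon_k$ for $n\in S_k$ ($k\ge1$) and $t_n:=0$ for $n\in S_0$. Here the lacunarity hypothesis is used. We build a nested sequence of closed arcs $J_n$, where $J_n$ has length roughly $\delta_n/a_n$ with $\delta_n\to0$ slowly, and $a_ny\in[t_n-\delta_n,t_n+\delta_n]$ for $y\in J_n$. Given $J_n$, the map $y\mapsto a_{n+1}y$ wraps $J_n$ around $\mathbb T$ about $a_{n+1}\delta_n/a_n$ times. Because $a_n/a_{n+1}\to0$, we can choose $\delta_n$ so that this number tends to infinity. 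In particular it is at least $1$ for large $n$, so $J_n$ contains a subarc $J_{n+1}$ of the required form around any prescribed target $t_{n+1}$.

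Then $x\in\bigcap_n J_n$ satisfies $\|a_nx-t_n\|\le\delta_n\to0$, as needed; the first finitely many indices are absorbed into $\mathrm{Fin}$. This is a standard Cantor-type nested interval argument, in the spirit of the classical constructions for lacunary sequences. The only care needed is to choose $\delta_n\to0$ with $a_{n+1}\delta_n/a_n\ge 2$ eventually, for instance $\delta_n:=\max\{\sqrt{a_n/a_{n+1}},\,2a_n/a_{n+1}\}$.
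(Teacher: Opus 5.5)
Your proposal is correct and is essentially the paper's proof. The paper likewise disjointifies a sequence witnessing the failure of the $P$-property, sets $t_n=\delta_k$ on the $k$-th piece and $t_n=0$ elsewhere, and realizes these targets with the nested-arc interpolation result (Proposition~\ref{prop:interpolation}, with tolerances $\sqrt{a_n/a_{n+1}}$), as you sketch. One point to state precisely: the second property of your partition should read ``some $k\ge 1$'' (which your disjointification does give), because your argument for $x\notin\mathsf{H}^\star_{\bm a}(\mathcal I)$ only works on the pieces where $t_n=\varepsilon_k>0$.
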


The proof of Theorem \ref{thm:Pproperty} relies on the following property (which will be proved in Proposition \ref{prop:interpolation} below): for each sequence $\bm{a}$ as in item \ref{item:03Pidealchar} and for each sequence $(t_n:n\in\omega)$ in $\mathbb T$, there exists $x\in\mathbb T$ such that $\lim_{n} (a_nx-t_n)=0$. 

We do not know whether the weaker equality $\mathscr{H}(\mathcal{I})=\mathscr{H}^\star(\mathcal{I})$ characterizes the $P$-property as well, hence we leave it as an open question.
\begin{question}
    Let $\mathcal{I}$ be an ideal on $\omega$. Is it true that $\mathcal{I}$ is a $P$-ideal if and only if $\mathscr{H}(\mathcal{I})=\mathscr{H}^\star(\mathcal{I})$?
\end{question}





\subsection{Upper bounds on topological complexities}

In the next result, following \cite[p. 199]{MR2849045}, an ideal $\I$ on $\omega$ is said to be a \emph{Farah ideal} if there exists a sequence $(C_i: i \in \omega)$ of hereditary\footnote{A family $\mathcal{F}\subseteq \mathcal{P}(\omega)$ is said to be hereditary if $A \in \mathcal{F}$ for all $A,B\subseteq \omega$ with $A\subseteq B\in \mathcal{F}$.} compact subsets of $\mathcal{P}(\omega)$ such that 
\begin{equation}\label{eq:Farahrepresentation}
\I=\bigcap_{i\in\omega}\bigcup_{n\in\omega}\{A\subseteq\omega:A\setminus n\in C_i\}.
\end{equation}
It is well known that if $\I$ is $F_\sigma$ or an analytic $P$-ideal then $\I$ is a Farah ideal, and that every Farah ideal is $F_{\sigma\delta}$ (We recall that it is an open question whether every $F_{\sigma\delta}$ ideal on $\omega$ is a Farah ideal.)
\begin{theorem}\label{thm:upperboundscomplexity}
    Let $\mathcal{I}$ be an ideal on $\omega$ and pick  an integer sequence $\bm{a} \in \mathbb{Z}^\omega$ and a countable ordinal $1\le \alpha<\omega_1$. Then the following hold\textup{:}
        \begin{enumerate}[label={\rm (\roman*)}]
        \item \label{item:1upperbound} If $\mathcal{I}$ is a Farah ideal \textup{(}in particular, if $\I$ is $F_\sigma$ or an analytic $P$-ideal\textup{)} then $\mathsf{H}_{\bm{a}}(\mathcal{I})$ is an $F_{\sigma\delta}$ subgroup\textup{;}
        \item \label{item:2upperbound} If $\mathcal{I}$ is a $\Pi^0_\alpha$ ideal then $\mathsf{H}_{\bm{a}}(\mathcal{I})$ is a $\Pi^0_{\alpha+1}$ subgroup\textup{;}
        \item \label{item:3upperbound} If $\mathcal{I}$ is a $\Sigma^0_\alpha$ ideal then $\mathsf{H}_{\bm{a}}(\mathcal{I})$ is a $\Pi^0_{\alpha+2}$ subgroup\textup{;}
        \item \label{item:4upperbound} If $\mathcal{I}$ is Borel \textup{[}analytic, co-analytic, respectively\textup{]} then $\mathsf{H}_{\bm{a}}(\mathcal{I})$ is a Borel \textup{[}analytic, co-analytic, respectively\textup{]} subgroup\textup{;} 
        \item \label{item:5upperbound} If $\mathcal{I}$ is an analytic ideal then $\mathsf{H}^\star_{\bm{a}}(\mathcal{I})$ is an analytic subgroup\textup{.}
        \end{enumerate}
\end{theorem}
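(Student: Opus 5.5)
The plan is to write $\mathsf{H}_{\bm a}(\mathcal I)$ as a countable intersection of preimages of $\mathcal I$ under maps that are continuous, or of Baire class one, from $\mathbb T$ to $\mathcal P(\omega)$. For $x\in\mathbb T$ and $k\in\omega$ set
$$
A_k(x):=\{n\in\omega:\|a_nx\|\ge 2^{-k}\},\qquad U_k(x):=\{n\in\omega:\|a_nx\|> 2^{-k}\}.
$$
Then $x\in\mathsf{H}_{\bm a}(\mathcal I)$ iff $A_k(x)\in\mathcal I$ for all $k$, iff $U_k(x)\in\mathcal I$ for all $k$, because $A_{k}(x)\subseteq U_{k+1}(x)\subseteq A_{k+1}(x)$ and $\mathcal I$ is hereditary. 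Hence
$$
\mathsf{H}_{\bm a}(\mathcal I)=\bigcap_k U_k^{-1}[\mathcal I]=\bigcap_k A_k^{-1}[\mathcal I].
$$
For fixed $n$, the set $\{x:\|a_nx\|>2^{-k}\}$ is open in $\mathbb T$ and $\{x:\|a_nx\|\ge 2^{-k}\}$ is closed. So each coordinate of $U_k$ is the indicator of an open set, and each coordinate of $A_k$ is the indicator of a closed set.

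\textbf{Key step: controlling preimages.} For a basic clopen set of $\mathcal P(\omega)$, the preimage under $U_k$ is a finite Boolean combination of open and closed sets. Such a combination is $\Delta^0_2$. Hence $U_k$ is $\Sigma^0_2$-measurable, that is, of Baire class one, and preimages of $\Pi^0_\alpha$ sets under $U_k$ are $\Pi^0_{\alpha+1}$, with the same shift for $\Sigma$ classes. Intersecting over $k$ gives part \ref{item:2upperbound}. For part \ref{item:3upperbound}, a $\Sigma^0_\alpha$ set pulls back to a $\Sigma^0_{\alpha+1}$ set, and the countable intersection is $\Pi^0_{\alpha+2}$. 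For part \ref{item:4upperbound}, preimages of Borel, analytic or co-analytic sets under Borel maps keep their class, and both classes are closed under countable intersections.

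\textbf{Part \ref{item:1upperbound}: the main obstacle.} Here I cannot just apply \ref{item:2upperbound} to an $F_{\sigma\delta}$ ideal, since that would give only $\Pi^0_4$. I will use the Farah representation \eqref{eq:Farahrepresentation} together with monotonicity in a direct computation. Each $C_i$ is hereditary and compact, so $\{A: A\subseteq B\}\subseteq C_i$ whenever $B\in C_i$. Then $x\in\mathsf{H}_{\bm a}(\mathcal I)$ iff
$$
\forall k\ \forall i\ \exists m:\ A_k(x)\setminus m\in C_i.
$$
I claim that for fixed $k,i,m$ the set $\{x: A_k(x)\setminus m\in C_i\}$ is closed. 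Since $C_i$ is compact and hereditary, $B\in C_i$ iff $B\cap j\in C_i$ for all $j$. For fixed $j$, the condition $(A_k(x)\setminus m)\cap j\in C_i$ says that a finite set determined by finitely many closed conditions lies in a hereditary family. Heredity makes this equivalent to: for every finite $F\subseteq j\setminus m$ not in $C_i$, some $n\in F$ has $\|a_nx\|<2^{-k}$. That is a finite union of open sets, hence open, not closed. So I switch to $U_k$. The condition $(U_k(x)\setminus m)\cap j\in C_i$ becomes: for each bad finite $F$, some $n\in F$ satisfies $\|a_nx\|\le 2^{-k}$. This is a finite intersection of finite unions of closed sets, hence closed. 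Intersecting over $j$ keeps it closed, taking the union over $m$ gives $F_\sigma$, and intersecting over $k$ and $i$ gives $F_{\sigma\delta}$. The delicate point is precisely this choice between strict and non-strict inequalities, which the equivalence of the $A_k$ and $U_k$ formulations makes harmless.

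\textbf{Part \ref{item:5upperbound}.} We have $x\in\mathsf H^\star_{\bm a}(\mathcal I)$ iff there is $S\in\mathcal I$ with $\lim_{n\notin S}a_nx=0$. The set of pairs $(x,S)$ satisfying the limit condition is Borel (indeed $\Pi^0_3$). Intersecting it with $\mathbb T\times\mathcal I$ gives an analytic set, and its projection to $\mathbb T$ is analytic.

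In every part the set is a subgroup because $\mathcal I$- and $\mathcal I^\star$-convergence to $0$ are preserved under differences.
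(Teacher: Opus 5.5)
Your proposal is correct and follows the paper's proof essentially step by step. The paper also writes $\mathsf{H}_{\bm a}(\mathcal I)=\bigcap_k\varphi_k^{-1}[\mathcal I]$ using the strict-inequality maps $\varphi_k=U_k$, which are of Baire class one; this gives (ii)--(iv). For (i) it shows that the preimages of the hereditary compact sets $\{A: A\setminus n\in C_i\}$ are closed, writing each such set as $\bigcap_{F\in\mathcal B}\{A: F\not\subseteq A\}$, which is the same idea as your finite-truncation argument. For (v) it projects the Borel set $R\cap(\mathbb T\times\mathcal I)$.
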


In particular, it follows from item \ref{item:4upperbound} and \cite[Theorem 13.6]{MR1321597} that, if $\mathcal{I}$ is Borel then each subgroup $\mathsf{H}_{\bm{a}}(\mathcal{I})$ is either countable or has cardinality $\mathfrak{c}$.

We remark that special instances of Theorem \ref{thm:upperboundscomplexity}\ref{item:1upperbound} (for certain analytic $P$-ideals) appeared in \cite{protasov, MR1240629, MR4078214}. 
In addition, the analogue of Theorem \ref{thm:upperboundscomplexity} which replaces the unit circle $\mathbb{T}$ with a Polish topological group $X$ and the sequence $\bm{a} \in \mathbb{Z}^\omega$ with a sequence of characters $v_n: X\to \mathbb{T}$ holds verbatim; more generally, item \ref{item:1upperbound} holds for arbitrary topological groups $X$, extending \cite[Theorem 6.5]{protasov} and answering \cite[Problem 6.11]{MR4078214} and \cite[Problem 7.27]{MR4932230} (we omit further details). 

\subsection{When every $\mathcal{I}$-characterized subgroup is $\mathcal{J}$-characterized?}

Properties of ideals can be often expressed by finding critical ideals with respect to some (pre-)order, cf. e.g. the survey \cite{MR2777744}.  
To this aim, given two ideals $\mathcal{I}$ and $\mathcal{J}$ on countably infinite sets $X$ and $Y$, respectively, we say that:
\begin{itemize}
\item (Rudin--Blass order) $\mathcal{I} \le_{\mathrm{RB}} \mathcal{J}$ if there is a finite-to-one function $f: Y\to X$ such that $f^{-1}[I] \in \mathcal{J}$ if and only if $I \in \mathcal{I}$;
\item (Rudin--Keisler order) $\mathcal{I} \le_{\mathrm{RK}} \mathcal{J}$ if there is a function $f: Y\to X$ such that $f^{-1}[I] \in \mathcal{J}$ if and only if $I \in \mathcal{I}$.
\end{itemize}
We write $\mathcal{I} \cong_{\mathrm{RB}} \mathcal{J}$ for $\mathcal{I} \le_{\mathrm{RB}} \mathcal{J} \le_{\mathrm{RB}} \mathcal{I}$, similarly for $\cong_{\mathrm{RK}}$. 
It is clear that $\mathcal{I} \le_{\mathrm{RB}} \mathcal{J}$ implies $\mathcal{I} \le_{\mathrm{RK}} \mathcal{J}$. 
Rudin--Blass and Rudin--Keisler orderings on the maximal ideals are extensively studied in the literature, cf. \cite[Section 1.3]{MR1711328} and references therein. In addition, if $\mathcal{J}$ is a $P$-ideal then $\le_{\mathrm{RB}}$ coincides with $\le_{\mathrm{RK}}$, see \cite[Proposition 1.3.1(b)]{MR1711328}.

In the following, we provide a sufficient condition for which every $\mathcal{I}$-characterized subgroup is also $\mathcal{J}$-characterized. 
\begin{theorem}\label{thm:RKinclusion}
Let $\mathcal{I}, \mathcal{J}$ be ideals on $\omega$. Then the following hold\textup{:}
\begin{enumerate}[label={\rm (\roman*)}]
        \item \label{item:1RK} If $\mathcal{I} \le_{\mathrm{RK}} \mathcal{J}$ then $\mathscr{H}(\mathcal{I})\subseteq \mathscr{H}(\mathcal{J})$\textup{;}
        \item \label{item:2RK} If $\mathcal{I} \le_{\mathrm{RB}} \mathcal{J}$ then $\mathscr{H}^\star(\mathcal{I})\subseteq \mathscr{H}^\star(\mathcal{J})$\textup{.}
\end{enumerate}
\end{theorem}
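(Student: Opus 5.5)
Given $f:\omega\to\omega$ witnessing $\mathcal{I}\le_{\mathrm{RK}}\mathcal{J}$ and $\bm a\in\mathbb Z^\omega$, I would take the pulled-back sequence $\bm b=(b_m: m\in\omega)$ with $b_m:=a_{f(m)}$, and show that $\mathsf H_{\bm a}(\mathcal I)=\mathsf H_{\bm b}(\mathcal J)$ for (i), and $\mathsf H^\star_{\bm a}(\mathcal I)=\mathsf H^\star_{\bm b}(\mathcal J)$ under $\le_{\mathrm{RB}}$ for (ii). The key identity is that, for every $x\in\mathbb T$ and $\varepsilon>0$,
$$
\{m\in\omega: \|b_m x\|\ge\varepsilon\}=f^{-1}\bigl[\{n\in\omega:\|a_n x\|\ge \varepsilon\}\bigr].
$$

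For (i) I would argue as follows. Because $f^{-1}[A]\in\mathcal J$ if and only if $A\in\mathcal I$, the left set lies in $\mathcal J$ exactly when $\{n:\|a_nx\|\ge\varepsilon\}\in\mathcal I$. Quantifying over $\varepsilon$ then gives $x\in \mathsf H_{\bm b}(\mathcal J)\iff x\in\mathsf H_{\bm a}(\mathcal I)$. So each $\mathcal I$-characterized subgroup is $\mathcal J$-characterized. This part is routine.

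For (ii), I would now take $f$ finite-to-one and prove both inclusions.

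For $\subseteq$, let $S\in\mathcal I$ with $a_nx\to0$ along $\omega\setminus S$. Then $f^{-1}[S]\in\mathcal J$. For $m\notin f^{-1}[S]$ we have $f(m)\notin S$, and since $f$ is finite-to-one, $f(m)\to\infty$ as $m\to\infty$. Hence $b_mx=a_{f(m)}x\to0$ along $\omega\setminus f^{-1}[S]$.

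For $\supseteq$, let $T\in\mathcal J$ with $b_mx\to 0$ off $T$. I would set $S:=\omega\setminus f[\omega\setminus T]$, so that $f^{-1}[S]\subseteq T\in\mathcal J$ and therefore $S\in\mathcal I$. Every $n\notin S$ has a preimage $m_n\notin T$. Given $\varepsilon$, only finitely many $m\notin T$ satisfy $\|b_mx\|\ge\varepsilon$, so only finitely many such $n$ have $\|a_nx\|=\|b_{m_n}x\|\ge\varepsilon$. Thus $a_nx\to0$ along $\omega\setminus S$.

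The main point to watch is this last direction. There I need the image to be taken on the complement of $T$, and I need to check that $f^{-1}[S]\subseteq T$ really forces $S\in\mathcal I$. It does, since $\mathcal J$ is hereditary and the RK equivalence applies. Finite-to-oneness is used only in the forward inclusion, to guarantee $f(m)\to\infty$. I expect no other obstacles.
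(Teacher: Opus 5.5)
Your proposal is correct and follows the paper's proof essentially verbatim: the same pulled-back sequence $b_m=a_{f(m)}$ and the same preimage identity settle (i). For (ii), your set $\omega\setminus f[\omega\setminus T]$ is exactly the paper's $\{m: f^{-1}[\{m\}]\subseteq T\}$, and your finiteness step in the reverse inclusion (the bad indices $n\notin S$ lie in the image under $f$ of a finite set) is just a rephrasing of the paper's choice of $M$ with $f[\{0,\ldots,N-1\}]\subseteq\{0,\ldots,M-1\}$.
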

In particular, it is immediate that $\mathscr{H}(\I)=\mathscr{H}(\J)$ whenever $\mathcal{I} \cong_{\mathrm{RK}} \mathcal{J}$. 
In the following, we denote by $\mathsf{Sub}_{\le \omega}$ the family of countable subgroups of $\mathbb{T}$. 
\begin{corollary}\label{RB:corollary}
    Let $\mathcal{I}$ be a meager ideal on $\omega$. Then 
    $$
    \mathsf{Sub}_{\le \omega}\subseteq 
    \mathscr{H}(\mathrm{Fin})\subseteq 
    \mathscr{H}(\mathcal{I}) \cap \mathscr{H}^\star(\mathcal{I}).
    $$
    In particular, every characterized subgroup of $\mathbb{T}$ is also $\mathcal{I}$-characterized. 
\end{corollary}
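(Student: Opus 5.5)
The plan is to derive the corollary from Theorem~\ref{thm:RKinclusion} together with the classical Talagrand--Jalali-Naini characterization of meager ideals. The first inclusion $\mathsf{Sub}_{\le\omega}\subseteq\mathscr{H}(\mathrm{Fin})$ is exactly the theorem of B\'ir\'o, Deshouillers and S\'os \cite{MR1877772} recalled in the introduction: every countable subgroup of $\mathbb{T}$ is characterized. So the whole work is to show $\mathscr{H}(\mathrm{Fin})\subseteq\mathscr{H}(\mathcal{I})$ and $\mathscr{H}(\mathrm{Fin})\subseteq\mathscr{H}^\star(\mathcal{I})$. By Theorem~\ref{thm:RKinclusion}\ref{item:2RK}, and since $\mathscr{H}^\star(\mathrm{Fin})=\mathscr{H}(\mathrm{Fin})$ (Fin is a $P$-ideal), both inclusions follow once we know $\mathrm{Fin}\le_{\mathrm{RB}}\mathcal{I}$. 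The same relation also gives $\mathrm{Fin}\le_{\mathrm{RK}}\mathcal{I}$, which yields the $\mathscr{H}(\mathcal{I})$ part through item~\ref{item:1RK}.

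To establish $\mathrm{Fin}\le_{\mathrm{RB}}\mathcal{I}$, I will use the Talagrand--Jalali-Naini theorem. An ideal $\mathcal{I}$ containing $\mathrm{Fin}$ is meager if and only if there is a partition of $\omega$ into consecutive finite intervals $P_0<P_1<\cdots$ such that every set containing infinitely many of the $P_k$ lies outside $\mathcal{I}$. Given such a partition, define $f:\omega\to\omega$ by $f(n)=k$ for $n\in P_k$. This $f$ is finite-to-one because each $P_k$ is finite.

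It remains to check that $f^{-1}[A]\in\mathcal{I}$ if and only if $A\in\mathrm{Fin}$. If $A$ is finite, then $f^{-1}[A]=\bigcup_{k\in A}P_k$ is finite, hence in $\mathcal{I}$. If $A$ is infinite, then $f^{-1}[A]$ contains infinitely many of the $P_k$, hence is in $\mathcal{I}^+$ by the choice of the partition.

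There is no real obstacle here; the only point to take care over is invoking the correct direction of Talagrand's characterization, which is the implication from meager to the existence of the interval partition. The final sentence of the statement simply restates the middle inclusion.
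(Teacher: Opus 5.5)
Your proposal is correct and follows the same route as the paper. You use B\'ir\'o--Deshouillers--S\'os for the first inclusion, Talagrand's characterization to get $\mathrm{Fin}\le_{\mathrm{RB}}\mathcal{I}$, and then Theorem~\ref{thm:RKinclusion} together with $\mathscr{H}^\star(\mathrm{Fin})=\mathscr{H}(\mathrm{Fin})$. The only difference is that you explicitly verify the finite-to-one map $f(n)=k$ for $n\in P_k$ from the interval-partition form of Talagrand's theorem, whereas the paper leaves that step to the citation.
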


As a special case, since $\mathcal{Z}$ is meager, it follows that $\mathsf{Sub}_{\le \omega}\subseteq \mathscr{H}(\mathcal{Z})$, which provides an answer to \cite[Question 7.12 and Question 7.13]{MR4932230},  \cite[Question 6.7]{MR4078214}, and the first part of \cite[Question 6.3]{MR4173319}. In turn, this extends the recent result \cite[Section 3.2]{DasGhoshAziz2025JLMS} that $\mathscr{H}(\mathcal{Z})$ contains some countably infinite subgroup of $\mathbb{T}$, cf. also \cite{MR5000582}. 


\subsection{Every subgroup is $\I$-characterized for some ideal $\I$}

Hereafter, for each sequence $\bm{a} \in \mathbb{Z}^\omega$, we denote its support and image by 
$$
\mathrm{supp}(\bm{a}):=\{n \in \omega: a_n\neq 0\} 
\,\, \text{ and }\,\,
\mathrm{ran}(\bm{a}):=
\{a_n:n\in \omega\}.
$$

Also, for each sequence $\bm{a}=(a_n: n \in \omega) \in \mathbb{Z}^\omega$, and for each $\alpha \in \mathbb{T}$ and $\varepsilon>0$, define 
$$
N_{\bm{a}}(\alpha,\varepsilon):=\{a_n 
: \|a_n\alpha\|\ge \varepsilon\}.
$$
The definition of $N_{a}(\alpha,\varepsilon)$
can be rephrased in the following manner which will be used later:
$$
N_{\bm{a}}(\alpha,\varepsilon)=\{n 
 \in \mathrm{ran}(\bm{a})
: \|n\alpha\|\ge \varepsilon\}
.$$

Also, we say that a sequence $\bm{a} \in \mathbb{Z}^\omega$ is \emph{thick} if $\mathrm{ran}(\bm{a})$ contains arbitrarily large intervals of integers (in particular, $\mathrm{ran}(\bm{a})$ is an infinite subset of $\mathbb{Z}$). 

If $\cA$ is a family of subsets of $\omega$ then the ideal generated by $\cA$ is the family of all $S\subseteq\omega$ such that
$$
    \exists_{k \in \omega} \exists_{A_0,\ldots,A_{k-1} \in \cA} 
    \exists_{F \in \mathrm{Fin}}\qquad
    S\subseteq A_0\cup\ldots\cup A_{k-1}\cup F.
$$
It is clear that the ideal generated by $\cA$ is an ideal: contains $\mathrm{Fin}$ and is stable under subsets and finite unions. However, it may happen that it is not proper.

\begin{definition}\label{def:IHa}
    For each thick sequence $\bm{a} \in \mathbb{Z}^\omega$ and for each subgroup $H$ of $\mathbb{T}$, let 
    $
    \mathcal{I}_{H,\bm{a}}
    $ 
    be the ideal on $\mathrm{ran}(\bm{a})$ generated by the family of sets $N_{\bm{a}}(\alpha,\varepsilon)$ with $\alpha \in H$ and $\varepsilon>0$. 
    Accordingly, define 
    $$
    \mathcal{J}_{H,\bm{a}}:=\{S\subseteq \omega: \{a_n: n \in S\} \in  \mathcal{I}_{H,\bm{a}}\}.
    $$
    In the special case of the sequence $\bm{a}=\bm{u}$, we simply write $\mathcal{J}_H:=\mathcal{I}_H:=\mathcal{I}_{H,\bm{u}}$. 
\end{definition}

It is worth noting that $\mathcal{I}_{H,\bm{a}}$ can be written as 
the family of all $S\subseteq \mathrm{ran}(\bm{a})$ for which there exist $k \in \omega$, $\alpha_0,\ldots,\alpha_{k-1} \in H$, $\varepsilon>0$, and a finite subset $F\subseteq \mathrm{ran}(\bm{a})$ such that $S\subseteq F\cup \bigcup_{i \in k}N_{\bm{a}}(\alpha_i,\varepsilon)$. Equivalently, it is the family of all $S\subseteq \mathrm{ran}(\bm{a})$ such that 
$$
\exists_{k \in \omega} 
\exists_{\alpha_0,\ldots,\alpha_{k-1} \in H} 
\exists_{m \in \omega}  
\exists_{N \in \omega} 
\forall_{n \in S\setminus [-N,N]} 
\exists_{i\in k}
\qquad 
\|n\alpha_i\|\ge 2^{-m}.
$$
\begin{remark}
We prove that each $\mathcal{I}_{H,\bm{a}}$ is a proper ideal,  that is $\mathrm{ran}(\bm{a})\notin \mathcal{I}_{H,\bm{a}}$. In fact, this is equivalent to
$$
    \forall_{k \in \omega} \forall_{\alpha_0,\ldots,\alpha_{k-1} \in \mathbb{T}} 
    \forall_{m \in \omega}  
    \forall_{N \in \omega}  
    \exists_{n \in \mathrm{ran}(\bm{a})\setminus [-N,N]} 
    \forall_{i \in k}
    \qquad \|\,n\alpha_i\,\|<2^{-m},
$$
    which holds by Dirichlet's approximation theorem (taking into account that $\mathrm{ran}(\bm{a})$ contains arbitrarily large intervals), cf. also Theorem \ref{thm:hardapproximation} below for a stronger claim. It follows that also $\mathcal{J}_{H,\bm{a}}$ is a (proper) ideal on $\omega$. 
\end{remark}

We list below some basic properties of $\mathcal{I}_{H,\bm{a}}$. 
If $\I$ is an ideal and $A\in\I^+$ then $\I\restriction A=\{B\cap A:B\in\I\}$. 

\begin{lemma}\label{lem:basicIHa}
Let $H$ be a subgroup of $\mathbb{T}$. Pick also thick sequences $\bm{a}, \bm{b} \in \mathbb{Z}^\omega$. 
Then the following  hold\text{:}
\begin{enumerate}[label={\rm (\roman*)}]
\item \label{item:1basiclemma} If $\mathrm{ran}(\bm{a})\subseteq \mathrm{ran}(\bm{b})$ then $\mathcal{I}_{H,\bm{a}}=\mathcal{I}_{H,\bm{b}}\upharpoonright \mathrm{ran}(\bm{a})$\textup{;}
\item \label{item:2basiclemma} 
$\mathcal{I}_{H,\bm{a}}=
\bigcup\{\mathcal{I}_{F,\bm{a}}:F\le H, F \text{ is finitely generated}\}$\textup{;}
\item \label{item:3basiclemma} If $H$ is countable then $\mathcal{I}_{H,\bm{a}}$ is countably generated\textup{.}
\end{enumerate}
\end{lemma}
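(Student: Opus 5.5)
Each of the three items reduces to unwinding Definition \ref{def:IHa} and using the explicit description of $\mathcal{I}_{H,\bm{a}}$ stated right after it: $S\in\mathcal{I}_{H,\bm{a}}$ iff $S\subseteq \mathrm{ran}(\bm{a})$ and $S\subseteq F\cup\bigcup_{i\in k}N_{\bm{a}}(\alpha_i,\varepsilon)$ for some finite $F$, some $\alpha_0,\ldots,\alpha_{k-1}\in H$ and some $\varepsilon>0$. The basic observation I will use throughout is
$$
N_{\bm{a}}(\alpha,\varepsilon)=\{n\in\mathrm{ran}(\bm{a}):\|n\alpha\|\ge\varepsilon\}=N_{\bm{b}}(\alpha,\varepsilon)\cap\mathrm{ran}(\bm{a})
\quad\text{whenever } \mathrm{ran}(\bm{a})\subseteq\mathrm{ran}(\bm{b}),
$$
which is the rephrased form of the definition given in the paper.

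For \ref{item:1basiclemma}, first note that $\mathcal{I}_{H,\bm{b}}\upharpoonright\mathrm{ran}(\bm{a})$ makes sense: by the Remark, $\mathcal{I}_{H,\bm{a}}$ is proper, i.e.\ $\mathrm{ran}(\bm{a})\notin\mathcal{I}_{H,\bm{a}}$, and this will give $\mathrm{ran}(\bm{a})\in\mathcal{I}_{H,\bm{b}}^+$ once the equality is established. Alternatively one can repeat the Dirichlet argument directly on $\mathrm{ran}(\bm{a})$.

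The two inclusions then go as follows.
\begin{enumerate}
\item If $S\in\mathcal{I}_{H,\bm{a}}$, witnessed by $F,\alpha_i,\varepsilon$, then $S\subseteq F\cup\bigcup_i N_{\bm{b}}(\alpha_i,\varepsilon)$. The right-hand side lies in $\mathcal{I}_{H,\bm{b}}$, and intersecting it with $\mathrm{ran}(\bm{a})$ still contains $S$. Hence $S$ belongs to the restriction.
\item Conversely, if $S=B\cap\mathrm{ran}(\bm{a})$ with $B\subseteq F\cup\bigcup_iN_{\bm{b}}(\alpha_i,\varepsilon)$, then intersecting with $\mathrm{ran}(\bm{a})$ gives $S\subseteq (F\cap\mathrm{ran}(\bm{a}))\cup\bigcup_iN_{\bm{a}}(\alpha_i,\varepsilon)$. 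Hence $S\in\mathcal{I}_{H,\bm{a}}$.
\end{enumerate}
This inclusion $\mathcal{I}_{H,\bm{b}}\upharpoonright\mathrm{ran}(\bm{a})\subseteq\mathcal{I}_{H,\bm{a}}$, combined with the properness of $\mathcal{I}_{H,\bm{a}}$, also yields $\mathrm{ran}(\bm{a})\notin\mathcal{I}_{H,\bm{b}}$. So this is the only slightly delicate bookkeeping point.

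For \ref{item:2basiclemma}, monotonicity is immediate: if $F\le H$ then every generator of $\mathcal{I}_{F,\bm{a}}$ is a generator of $\mathcal{I}_{H,\bm{a}}$, so the union is contained in $\mathcal{I}_{H,\bm{a}}$. For the reverse inclusion, a set $S\in\mathcal{I}_{H,\bm{a}}$ is witnessed by finitely many $\alpha_0,\ldots,\alpha_{k-1}\in H$. The subgroup $F$ generated by them is finitely generated and $F\le H$, and the same witness shows $S\in\mathcal{I}_{F,\bm{a}}$.

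For \ref{item:3basiclemma}, I will use that $N_{\bm{a}}(\alpha,\varepsilon)$ increases as $\varepsilon$ decreases. Hence every generator $N_{\bm{a}}(\alpha,\varepsilon)$ is contained in $N_{\bm{a}}(\alpha,2^{-m})$ for $m$ large enough. It follows that the countable family $\{N_{\bm{a}}(\alpha,2^{-m}):\alpha\in H,\ m\in\omega\}$ generates the same ideal, and it is countable since $H$ is countable. No step here is a genuine obstacle; the only care needed is the properness check in \ref{item:1basiclemma} so that the restriction is well defined.
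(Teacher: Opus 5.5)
Your proposal is correct and matches the paper's proof: (i) rests on the identity $N_{\bm a}(\alpha,\varepsilon)=N_{\bm b}(\alpha,\varepsilon)\cap\mathrm{ran}(\bm a)$, and (ii) on observing that any witness uses only finitely many $\alpha_i\in H$, exactly as in the paper. For (iii) the paper just says it follows from (ii), whereas you exhibit the countable generating family $\{N_{\bm a}(\alpha,2^{-m}):\alpha\in H,\ m\in\omega\}$ directly (essentially the observation needed to make that deduction work). Your check that $\mathrm{ran}(\bm a)\notin\mathcal{I}_{H,\bm b}$ also supplies a detail the paper leaves unstated.
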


Now, let $\bm{a}$ be a thick sequence of integers, and set
$A:=\mathrm{ran}(\bm a)$. By an abuse of notation,
if $\I$ is an ideal on the countably infinite set $A$, let us write 
$$\mathsf H_A(\mathcal I)
:=\{x\in\mathbb T:\forall\varepsilon>0\ 
\{n\in A:\|nx\|\geq\varepsilon\}\in\mathcal I\}.$$ 
As it follows, by the definition of $\mathcal{I}_{H,\bm{a}}$, if $H$ is a subgroup of $\mathbb{T}$ and $\mathcal{I}$ is an ideal on $A$ such that $H=\mathsf{H}_{A}(\mathcal{I})$ then necessarily $\mathcal{I}_{H,\bm{a}}\subseteq \mathcal{I}$. Essentially, the following theorem shows that $\mathcal{I}_{H,\bm{a}}$ is indeed the minimal ideal $\mathcal{I}$ for which such equality holds, cf. \eqref{eq:equalityrepresentations} below. 
(A proof of the next result in the case where $(a_n: n \in \omega)$ is an enumeration of $\mathbb{Z}$ can be found in \cite[Theorem 2.1]{MR2227021}.)

\begin{theorem}\label{thm:beigblock_representation}
Let $\bm{a}\in \mathbb{Z}^\omega$ be a thick sequence of integers. Then 
$$
H=\mathsf{H}_{\bm{a}}(\mathcal{J}_{H,\bm{a}})
$$
for each subgroup $H$ of $\mathbb{T}$.
\end{theorem}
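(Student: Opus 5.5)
We have to show two inclusions. Write $A:=\mathrm{ran}(\bm a)$ and $\mathcal{J}:=\mathcal{J}_{H,\bm a}$. The first step is to translate back to $A$. By definition, $S\in\mathcal{J}$ if and only if $\{a_n:n\in S\}\in\mathcal{I}_{H,\bm a}$. For $x\in\mathbb{T}$ and $\varepsilon>0$, the set $\{n:\|a_nx\|\ge\varepsilon\}$ has image exactly $N_{\bm a}(x,\varepsilon)$. Hence
$$
\mathsf{H}_{\bm a}(\mathcal{J})=\{x\in\mathbb{T}:\forall\varepsilon>0\ N_{\bm a}(x,\varepsilon)\in\mathcal{I}_{H,\bm a}\}=\mathsf{H}_A(\mathcal{I}_{H,\bm a}).
$$

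\emph{The inclusion $H\subseteq\mathsf{H}_{\bm a}(\mathcal J)$.} This is immediate from the reformulation: if $\alpha\in H$, then each $N_{\bm a}(\alpha,\varepsilon)$ is one of the generators of $\mathcal{I}_{H,\bm a}$.

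\emph{The reverse inclusion.} Fix $x\notin H$. We need some $\varepsilon>0$ with $N_{\bm a}(x,\varepsilon)\notin\mathcal{I}_{H,\bm a}$. By the explicit description of $\mathcal{I}_{H,\bm a}$, it suffices to find one $\varepsilon>0$ such that for every $k$, all $\alpha_0,\dots,\alpha_{k-1}\in H$, every $m$ and every $N$, there is $n\in A\setminus[-N,N]$ with
$$
\|n\alpha_i\|<2^{-m}\ \text{for all } i<k \qquad\text{and}\qquad \|nx\|\ge\varepsilon .
$$
The $\alpha_i$ generate a finitely generated subgroup $F\le H$, and $x\notin F$. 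So the core is a simultaneous approximation lemma (presumably the Theorem \ref{thm:hardapproximation} alluded to in the Remark):

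\emph{If $F\le\mathbb{T}$ is finitely generated and $x\notin F$, then there is $\varepsilon>0$, depending only on $x$, such that for any $\alpha_0,\dots,\alpha_{k-1}\in F$, any $\delta>0$ and any $L$, every sufficiently long interval $I$ of integers beyond $L$ contains $n$ with $\|n\alpha_i\|<\delta$ for all $i$ and $\|nx\|\ge\varepsilon$.}

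Thickness of $\bm a$ then supplies such an interval inside $A$, which gives the required $n$.

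\emph{Proof of the approximation lemma, splitting by the structure of $\langle F,x\rangle$.}
\begin{itemize}
\item \emph{Case $x$ has infinite order modulo $F$.} Reduce to generators $\beta_1,\dots,\beta_r$ of $F$. By the Kronecker--Weyl theorem, the closure $K$ of $\{n(\beta_1,\dots,\beta_r,x)\}$ in $\mathbb{T}^{r+1}$ is a closed subgroup. Since $x$ is independent of $F$ over $\mathbb{Z}$, $K$ contains points whose $F$-coordinates are $0$ and whose $x$-coordinate is $1/2$, or at least a fixed nonzero value. Taking $\varepsilon$ to be a fixed positive quantity tied to that value works.
\item \emph{Case $qx\in F$ for the least $q\ge2$.} Then $K$ contains a point $(0,\dots,0,j/q)$ with $j\not\equiv0\pmod q$, so $\varepsilon=1/(2q)$ works.
\end{itemize}
In both cases we use that the orbit $n\mapsto n\gamma$ in a compact monothetic group returns to every nonempty open set syndetically, i.e.\ with bounded gaps. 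Hence every long enough interval, in either direction, hits the open set
$$
\{\,y\in K:\ \|y_i\|<\delta \text{ on the } F\text{-coordinates},\ \|y_{x}\|>\varepsilon\,\},
$$
where $y_x$ denotes the $x$-coordinate. This open set is nonempty by the choice of point above.

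\emph{Expected main obstacle.} The delicate point is that $\varepsilon$ must be chosen \emph{before} the $\alpha_i$, uniformly over all finitely generated $F\le H$. In the infinite-order case the achievable $x$-coordinate in the closure may depend on $F$. I would handle this by showing that the set of attainable $x$-values, given that the $F$-coordinates vanish, is a nontrivial closed subgroup of $\mathbb{T}$ containing the class of $x$'s annihilator obstruction. A cleaner route: the image of $\ker(K\to F\text{-coordinates})$ in $\mathbb{T}$ is a subgroup that is nontrivial because $x\notin F$. Every nontrivial closed subgroup of $\mathbb{T}$ contains a point of norm at least $1/3$, so $\varepsilon=1/4$ works uniformly in the infinite-order case as well.
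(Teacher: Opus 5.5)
Your proposal is correct and follows the paper's proof essentially step for step: the same reformulation $\mathsf{H}_{\bm a}(\mathcal{J}_{H,\bm a})=\mathsf{H}_{\mathrm{ran}(\bm a)}(\mathcal{I}_{H,\bm a})$, the same trivial inclusion, and the same reduction to the simultaneous approximation result (Theorem~\ref{thm:hardapproximation}), which the paper proves as you do (its homomorphism $\varphi$ killing $\langle\alpha_0,\dots,\alpha_{k-1}\rangle$ with $\|\varphi(x)\|\ge\nicefrac13$, plus Kronecker's theorem, is your kernel-image/duality argument), followed by syndetic returns and thickness. One correction: the uniformity issue lies in the torsion case, not the infinite-order one. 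Your bullet's $\varepsilon=1/(2q)$ depends on $F$ through $q$, whereas in the infinite-order case every $(0,\dots,0,t)$ lies in $K$. Your closing observation that every nontrivial closed subgroup of $\mathbb{T}$ has an element of norm at least $\nicefrac13$ is what gives $\varepsilon=\nicefrac14$ uniformly in both cases, exactly as in the paper.
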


In particular, since $\mathrm{ran}(\bm{u})=\omega$, it follows by Theorem \ref{thm:beigblock_representation} that for each subgroup $H$ of $\mathbb{T}$ there exists an ideal $\mathcal{I}$ on $\omega$ such that $H$ is $\mathcal{I}$-characterized (and, more precisely, $H=\mathsf{H}_{\bm{u}}(\mathcal{I})$ with $\mathcal{I}=\mathcal{I}_H$). 
This provides an answer to \cite[Question 6.6(a) and Question 6.6(b)]{protasov}. 
In addition, if $H$ and $K$ are two distinct subgroups of $\mathbb{T}$, then $\mathcal{J}_{H,\bm{a}} \neq \mathcal{J}_{K,\bm{a}}$, hence also  $\mathcal{I}_{H,\bm{a}} \neq \mathcal{I}_{K,\bm{a}}$.

At this point it is natural to ask which ideals are of the form $\mathcal{I}_H$. The following result gives a partial answer. 
Here, $\langle x\rangle$ stands for the subgroup generated by $x\in \mathbb{T}$. 
Two ideals $\mathcal{I}_1$ and $\mathcal{I}_2$ on countably infinite sets $W_1$ and $W_2$, respectively, are isomorphic if there exists a bijection $f: W_1\to W_2$ such that $f[S] \in \mathcal{I}_2$ if and only if $S \in \mathcal{I}_1$ for all $S\subseteq W_1$. $\Fin\oplus\mathcal{P}(\omega)$ is the ideal on $\{0,1\}\times\omega$ generated by the one-element family $\{\{1\}\times\omega\}$. $\Fin\otimes \emptyset$ is the ideal on $\omega^2=\omega\times \omega$ generated by the family $\{\{n\}\times\omega: n\in\omega\}$. 

\begin{lemma}\label{lem:basicIH}
Let $H$ be a subgroup of $\mathbb{T}$. Pick also a point $x \in \mathbb{T}$. 
Then the following  hold\text{:}
\begin{enumerate}[label={\rm (\roman*)}]
\item \label{item:4basiclemma} If $H\neq \{0\}$ then $\mathcal{I}_{H}$ is generated by sets with positive asymptotic density\textup{;}
\item \label{item:5basiclemma} If $x=0$, then $\I_{\langle x\rangle}=\Fin$\textup{;}
\item \label{item:6basiclemma} If $x\neq 0$ is rational, then $\I_{\langle x\rangle}$ is isomorphic to $\Fin\oplus\mathcal{P}(\omega)$\textup{;}
\item \label{item:7basiclemma} If $x$ is irrational, then $\I_{\langle x\rangle}$ is isomorphic to $\Fin\otimes \emptyset$\textup{.}
\end{enumerate}
\end{lemma}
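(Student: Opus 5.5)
We treat the four items separately; all concern $\mathcal{I}_H=\mathcal{I}_{H,\bm u}$ on $\omega$, which is generated by the sets $N_{\bm u}(\alpha,\varepsilon)=\{n\in\omega:\|n\alpha\|\ge\varepsilon\}$ for $\alpha\in H$ and $\varepsilon>0$.

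For item (i), observe that enlarging a generator keeps the generated ideal unchanged: if $N_{\bm u}(\alpha,\varepsilon)\subseteq N_{\bm u}(\alpha,\varepsilon')$ with $\varepsilon'<\varepsilon$, the smaller set is redundant. So it suffices to show that each nonzero $\alpha\in H$ and each sufficiently small $\varepsilon$ give a generator of positive density, and then replace every generator by such a larger one. If $\alpha=p/q$ in lowest terms with $q\ge2$, then $N_{\bm u}(\alpha,\varepsilon)\supseteq\{n:q\nmid n\}$ whenever $\varepsilon\le 1/q$, and this set has density $1-1/q$. If $\alpha$ is irrational, Weyl equidistribution gives $d(N_{\bm u}(\alpha,\varepsilon))=1-2\varepsilon>0$ for $\varepsilon<1/2$. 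Since the generators with $\alpha=0$ are empty, the nonzero $\alpha$ and small $\varepsilon$ suffice, and using the monotonicity in $\varepsilon$ the ideal is generated by positive-density sets. (If $H=\{0\}$ every generator is empty.)

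Item (ii) is immediate: $\langle 0\rangle=\{0\}$, every $N_{\bm u}(0,\varepsilon)$ is empty, and the generated ideal is $\mathrm{Fin}$.

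For item (iii), let $x=p/q$ in lowest terms with $q\ge2$, so $\langle x\rangle=\{k/q:k<q\}$. Each $N_{\bm u}(k/q,\varepsilon)$ is contained in $\omega\setminus q\omega$, since $n\in q\omega$ gives $\|nk/q\|=0$. Conversely, $N_{\bm u}(x,1/q)=\omega\setminus q\omega$, because $nx\in\frac1q\mathbb Z$ is nonzero exactly when $q\nmid n$. Hence $\mathcal{I}_{\langle x\rangle}$ is generated by the single set $\omega\setminus q\omega$ together with $\mathrm{Fin}$, and both $q\omega$ and its complement are infinite. A bijection sending $\omega\setminus q\omega$ to $\{1\}\times\omega$ and $q\omega$ to $\{0\}\times\omega$ then witnesses the isomorphism with $\mathrm{Fin}\oplus\mathcal P(\omega)$.

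Item (iv) is the main work. Let $x$ be irrational. Every element of $\langle x\rangle$ is $kx$, and $N_{\bm u}(kx,\varepsilon)\subseteq N_{\bm u}(x,\varepsilon/|k|)$ because $\|nkx\|\le|k|\|nx\|$. So $\mathcal{I}_{\langle x\rangle}$ is generated by the increasing chain $A_m:=N_{\bm u}(x,2^{-m})$, and its complement $\omega\setminus A_m=\{n:\|nx\|<2^{-m}\}$ is infinite by Dirichlet. Put $B_0:=A_0$ and $B_m:=A_m\setminus A_{m-1}=\{n:2^{-m}\le\|nx\|<2^{-m+1}\}$. Each $B_m$ is infinite, since by equidistribution it has positive density. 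The sets $B_m$ partition $\omega\setminus\{0\}$, and $0$ lies in every complement, so it can be absorbed as a finite modification. Now choose a bijection $f:\omega\setminus\{0\}\to\omega\times\omega$ mapping $B_m$ onto $\{m\}\times\omega$, and handle $0$ by a finite adjustment, for instance by shifting one column. Under $f$, a set $S$ lies in $\mathcal{I}_{\langle x\rangle}$ iff $S\subseteq A_m\cup F$ for some $m$ and finite $F$, iff $f[S]$ is contained, up to finitely many points, in finitely many columns. This is exactly membership in $\mathrm{Fin}\otimes\emptyset$, which is generated by the columns and contains $\mathrm{Fin}$. The main obstacle is checking that every $B_m$ is infinite, which equidistribution settles, and keeping track of the finite sets correctly in both directions.
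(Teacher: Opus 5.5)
Your proposal follows essentially the same route as the paper's proof, with more of the details filled in. Two of your additions are correct and needed: the reduction $N_{\bm u}(kx,\varepsilon)\subseteq N_{\bm u}(x,\varepsilon/|k|)$ in (iv), and the restriction to small $\varepsilon$ in (i), which handles the fact that for large $\varepsilon$ the generators are empty (a point the paper glosses over).

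There is one slip, in (iv). For irrational $x$ one has $B_0=A_0=N_{\bm u}(x,1)=\emptyset$ and $B_1=\{n:\|nx\|=\nicefrac12\}=\emptyset$. So ``each $B_m$ is infinite'' fails for $m\le 1$, and no bijection can map these sets onto columns. The fix is to start the partition at $B_2:=A_2$ (which has density $\nicefrac12$) and index the columns by $m-2$. The paper's sketch has the same slip, since $A_1\setminus A_0=\emptyset$ there.
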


In the following result, we provide a relationship between the complexity of $H$ and that of $\J_{H,\bm{a}}$. In addition, we characterize the $P$-property of $\J_{H,\bm{a}}$. Lastly, we show that if $H$ is countable then $\J_{H,\bm{a}}$ is a $P^+$-ideal, namely, for all decreasing sequences $(A_n: n\in \omega)$ in $\J_{H,\bm{a}}^+$ there exists $A \in \J_{H,\bm{a}}^+$ such that $A\setminus A_n$ is finite for all $n \in \omega$. 
\begin{proposition}\label{prop:HFsigmaIHFsigma}
Let $\bm{a}$ be a thick sequence, and let $H$ be a subgroup of $\mathbb{T}$. Then the following hold\text{:}
\begin{enumerate}[label={\rm (\roman*)}]
 \item \label{item:1complexIH} If $H$ is $F_\sigma$ then $\mathcal{J}_{H,\bm{a}}$ is an $F_\sigma$ ideal \textup{(}and, hence, a $P^+$-ideal\textup{)}\textup{;}
    \item \label{item:2complexIH} If $H$ is analytic then $\mathcal{J}_{H,\bm{a}}$ is an analytic ideal\textup{;}
    \item \label{item:4complexIH} $H$ is finite if and only if $\mathcal{I}_{H,\bm{a}}$ is a $P$-ideal\textup{;}
    \item \label{item:5complexIH} if $\bm{a}$ is finite-to-one, then $H$ is finite if and only if $\mathcal{J}_{H,\bm{a}}$ is a $P$-ideal\textup{.}
 \end{enumerate}
\end{proposition}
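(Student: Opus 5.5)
The plan is to treat the four items separately. Items (i) and (ii) come from a direct definability computation, and items (iii) and (iv) from a combinatorial analysis of the generators $N_{\bm a}(\alpha,\varepsilon)$.

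\emph{Items (i) and (ii).} Unwinding Definition \ref{def:IHa} with the equivalent description of $\mathcal{I}_{H,\bm a}$ given after it, a set $S\subseteq\omega$ lies in $\mathcal{J}_{H,\bm a}$ if and only if there are $k,m,N\in\omega$ and $\alpha_0,\dots,\alpha_{k-1}\in H$ such that for every $n\in S$ with $|a_n|>N$ there is $i<k$ with $\|a_n\alpha_i\|\ge 2^{-m}$.

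For (i), write $H=\bigcup_j K_j$ with each $K_j$ compact, and fix $j,k,m,N$. The set of pairs $(\vec\alpha,S)\in K_j^k\times\mathcal P(\omega)$ satisfying the displayed condition is closed. Indeed, for each $n$ the condition reads "$n\notin S$ or $\bigvee_{i<k}\|a_n\alpha_i\|\ge 2^{-m}$", which is a closed condition, and we take the intersection over $n$. Projecting a closed set along the compact factor $K_j^k$ gives a closed set. So $\mathcal{J}_{H,\bm a}$ is a countable union of closed sets, hence $F_\sigma$. The $P^+$ property then follows from the standard fact that $F_\sigma$ ideals are $P^+$ (via Mazur's lower semicontinuous submeasure representation).

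For (ii), the same formula exhibits $\mathcal{J}_{H,\bm a}$ as a countable union of projections of Borel subsets of $H^k\times\mathcal P(\omega)$. Since $H$ is analytic, each such projection is analytic, so $\mathcal{J}_{H,\bm a}$ is analytic.

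\emph{Item (iii), "only if" direction.} Assume $H$ is finite, so every $\alpha\in H$ is rational, say $\alpha=p/q$. Then $\|n\alpha\|$ is either $0$ or at least $1/q$, so $N_{\bm a}(\alpha,\varepsilon)$ stabilizes once $\varepsilon\le 1/q$. Hence $\mathcal{I}_{H,\bm a}$ is generated by a single set $G$ (the union of finitely many stabilized generators), together with $\mathrm{Fin}$. Such an ideal is $\{S:S\setminus G \text{ is finite}\}$, which is clearly a $P$-ideal.

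\emph{Item (iii), "if" direction.} Assume $H$ is infinite. Then $H$ is not discrete, so it contains nonzero elements of arbitrarily small norm; fix one nonzero $\beta\in H$. Consider the sets $N_{\bm a}(\beta,1/k)\in\mathcal{I}_{H,\bm a}$. Suppose $S\in\mathcal{I}_{H,\bm a}$ satisfies $N_{\bm a}(\beta,1/k)\subseteq^* S$ for all $k$. Then $S\subseteq F\cup\bigcup_{i<k}N_{\bm a}(\alpha_i,2^{-m})$ for some finite $F$. We may assume $\beta$ is among the $\alpha_i$.

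We derive a contradiction by producing infinitely many $n\in\mathrm{ran}(\bm a)$ with $\|n\alpha_i\|<2^{-m}$ for all $i$ and $\|n\beta\|\ge 1/k$, for a suitable large $k$. The construction has three steps.
\begin{itemize}
\item By Dirichlet, choose $d\neq 0$ with $\|d\alpha_i\|<\delta$ for all $i$, where $\delta$ is tiny, and with $d\beta\ne 0$. This last condition can be ensured if $\beta$ is irrational. If $H$ is torsion, replace $\beta$ by elements $\beta_j$ of growing orders and argue analogously with their denominators.
\item Walk along multiples $jd$, for $j\le J$, so that $jd\beta$ enters the band $[1/k,2^{-m})$ while the quantities $\|jd\alpha_i\|$ stay below $2^{-m-1}$.
\item Translate by some $c$ lying in a long interval of $\mathrm{ran}(\bm a)$ (thickness), again chosen by Dirichlet so that all $\|c\alpha_i\|$ and $\|c\beta\|$ are tiny. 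This gives infinitely many such $n$ inside $\mathrm{ran}(\bm a)$.
\end{itemize}
I expect this step to be the main obstacle. One has to tune $\delta$, $J$ and $k$ simultaneously, and handle the torsion case, where no irrational element is available, by using denominators of elements of $H$ that tend to infinity.

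\emph{Item (iv).} When $\bm a$ is finite-to-one, the map $n\mapsto a_n$ is a finite-to-one surjection $\omega\to\mathrm{ran}(\bm a)$, and $\mathcal{J}_{H,\bm a}$ is exactly the preimage ideal of $\mathcal{I}_{H,\bm a}$. The $P$-property transfers in both directions along finite-to-one preimages: take pseudo-unions of images, respectively of preimages, and note that finite sets pull back to finite sets. Combining this with (iii) gives (iv).
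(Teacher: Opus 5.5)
Your proposal is correct. The ``only if'' half of (iii) and item (iv) match the paper, with (iv) being Lemma \ref{lem:charactePideal}. The rest takes a different route in two places.

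For (i)--(ii) you work directly in $\mathcal P(\omega)$. The paper instead writes $\mathcal I_{H,\bm a}$ on $Z=\mathrm{ran}(\bm a)$ as a union of hereditary compact sets $C_{k,m,N}(K_p)$ and pulls back along $\pi(S)=\{a_n:n\in S\}$ via Lemma \ref{lem:piBorel}. Your version is shorter and needs no auxiliary lemma; just take the $K_j$ increasing, so that finitely many $\alpha_i$ lie in a single $K_j$.

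The substantive difference is the ``if'' direction of (iii).
\begin{itemize}
\item The paper splits into two cases. If $H$ is not finitely generated, it uses the sets $N_{\bm a}(x_j,\nicefrac14)$, with $x_j$ escaping every finitely generated subgroup, together with Theorem \ref{thm:hardapproximation}. If $H$ is infinite and finitely generated, it uses the sets $B_m$ built from the generators, plus the fact that $0$ is not isolated in the infinite compact group $\overline{\{ny:n\in\mathbb Z\}}$.
\item You split into ``$H$ has an irrational element $\beta$'' and ``$H$ is infinite torsion''. The single family $N_{\bm a}(\beta,1/k)$ then handles every $H$ with an irrational element. The torsion case reduces to divisibility: with $Q$ the lcm of the orders of the $\alpha_i$, every $n\in Q\mathbb Z$ annihilates all $\alpha_i$. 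Once the order of $\beta_j$ exceeds $Q$, we have $n\beta_j\neq0$ on a nonempty periodic subset of $Q\mathbb Z$.
\item Your route therefore never needs Kronecker's theorem or Claim \ref{claim:homom}. The paper's route instead recycles Theorem \ref{thm:hardapproximation}, which it needs anyway for Theorem \ref{thm:beigblock_representation}.
\end{itemize}

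The obstacle you anticipate in the irrational case is not real. The threshold $k$ may be chosen after $S$ and the $\alpha_i$ are fixed, so no walking along $jd$ is needed. A single Dirichlet multiplier $d\neq0$ with $\max_i\|d\alpha_i\|<2^{-m-1}$ gives $\eta:=\|d\beta\|>0$, and any $k\ge2/\eta$ works.

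What the translation step does require is that the Bohr set $\{c:\max_i\|c\alpha_i\|<\min(\eta/2,2^{-m-1})\}$ be syndetic. This is what lets you place $c+d$ inside a long interval of $\mathrm{ran}(\bm a)$ beyond any given $N$. Plain Dirichlet does not give this, but the compactness argument of Claim \ref{claim:Bsyndetic} does.
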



Theorem \ref{thm:beigblock_representation} also leads us to the following open question: 
\begin{question}\label{question:Istarrepresentation}
Is it true that every subgroup of $\mathbb{T}$ 
is $\I^\star$-characterized for some ideal $\I$?
\end{question}

Following \cite{MR1396895}, we recall that a subgroup $H$ of $\mathbb{T}$ is said to be \emph{Polishable} if it admits a Polish group topology $\tau$ on $H$ having the same Borel sets as $H$ when considered as a topological subgroup of $\mathbb{T}$, cf. \cite[Definition 1.18]{MR3461178}. 
Such topology $\tau$, if it exists, is necessarily unique \cite{MR1720580}. 
All Polishable subgroups are Borel, and they might attain arbitrarily high Borel complexity; detailed studies on Polishable subgroups can be found in \cite{MR2189217, MR2267154, MR5045103}. 
It is known that characterized subgroups are necessarily Polishable, see e.g. \cite[Corollary~1]{MR2729343} or \cite[Theorem 1.21]{MR3461178} and cf. Remark \ref{rmk:polishabilityZ} below. 
\begin{corollary}\label{cor:notPolishable}
    There exists an $F_\sigma$ ideal $\mathcal{I}$ on $\omega$ and an $F_\sigma$ subgroup $H$ of $\mathbb{T}$ such that $H$ is $\mathcal{I}$-characterized and not Polishable \textup{(}hence, not characterized\textup{)}.
\end{corollary}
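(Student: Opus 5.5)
We will take $\bm{a}=\bm{u}$, pick a suitable $F_\sigma$ subgroup $H$ of $\mathbb{T}$ that is not Polishable, and set $\mathcal{I}:=\mathcal{J}_H=\mathcal{I}_H$. Theorem \ref{thm:beigblock_representation} then gives $H=\mathsf{H}_{\bm{u}}(\mathcal{I}_H)$, so $H$ is $\mathcal{I}_H$-characterized. Proposition \ref{prop:HFsigmaIHFsigma}\ref{item:1complexIH} then gives that $\mathcal{I}_H$ is an $F_\sigma$ ideal on $\omega$. Finally, since characterized subgroups are Polishable (as recalled just before the statement), a non-Polishable $H$ cannot be characterized. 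So the whole argument reduces to exhibiting a single $F_\sigma$ subgroup of $\mathbb{T}$ that is not Polishable.

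For the choice of $H$ we would use a countably generated, non-Borel-rigid construction. The simplest natural candidate is any \emph{uncountable} $F_\sigma$ subgroup that is also $\sigma$-compact yet of first category and dense. First, consider the Polishability obstruction: a Polishable subgroup $H$ with Polish topology $\tau$ that is $\sigma$-compact in $\mathbb{T}$ is a countable union of compact sets $K_n$. Each $K_n$ is Borel in $(H,\tau)$, so by Baire category in $(H,\tau)$ some $K_n$ is $\tau$-nonmeager. By Pettis' theorem, $K_n-K_n$ then contains a $\tau$-neighbourhood of $0$. We then want to conclude that $\tau$ is locally compact, or at least derive a contradiction with the choice of $H$. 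One must be careful, since the identity $(H,\tau)\to \mathbb{T}$ is continuous (the Borel structures coincide, so this follows by automatic continuity of Borel homomorphisms between Polish groups).

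A cleaner route, which I would try first, is to cite a known example. Take $H$ to be the subgroup generated by a perfect compact set $C\subseteq\mathbb{T}$ that is independent over $\mathbb{Q}$ (Mycielski). Then $H=\bigcup_n (\pm C\pm\cdots\pm C)$ ($n$ summands) is $\sigma$-compact, hence $F_\sigma$. Suppose $\tau$ were a Polish group topology. By the argument above, some $n$-fold sum $K_n$ is $\tau$-nonmeager, and the identity is continuous. On $K_n$, the topology $\tau$ and the Euclidean topology must then coincide, since a continuous bijection from the compact set $K_n$ onto a Hausdorff space is a homeomorphism when we restrict the inverse direction appropriately. It follows that $(H,\tau)$ is locally compact. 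A locally compact, $\sigma$-compact group topology on $H$ finer than the Euclidean one with an open compact-ish neighbourhood $V\subseteq K_{2n}$ of $0$ then forces $H$ to be generated by a neighbourhood contained in a finite sum of copies of $C$. By independence of $C$, elements of $K_{2n}$ have bounded "length" in the free basis $C$, while $H$ contains elements of arbitrarily large length. One then needs to show that $V$, being a $\tau$-open subgroup-generating set, yields a contradiction: for instance, $(H,\tau)$ locally compact and non-discrete would need uncountably many elements near $0$ of short length. In fact $V$ is a $\tau$-neighbourhood, and $\tau$-convergent sequences are Euclidean-convergent, so $\tau$ restricted to $K_{2n}$ is Euclidean; hence $K_{2n}$ is $\tau$-compact with nonempty $\tau$-interior. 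Translates $c+V$ for $c\in C$ would then need, by independence, to meet $V$ only when $c$ is near $0$. This is the delicate combinatorial step.

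The main obstacle is precisely the non-Polishability of the chosen $H$. If the independent-Cantor-set argument becomes messy, the fallback is to quote a known example from the literature of an $F_\sigma$ (even $\sigma$-compact) subgroup of $\mathbb{T}$ that is not Polishable, e.g.\ from \cite{MR2189217} or \cite{MR3461178}. With that in hand, the remaining steps are immediate applications of Theorem \ref{thm:beigblock_representation} and Proposition \ref{prop:HFsigmaIHFsigma}\ref{item:1complexIH}.
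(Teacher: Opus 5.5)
Your reduction is exactly the paper's. With $\bm a=\bm u$ and $\mathcal I=\mathcal I_H$, Theorem~\ref{thm:beigblock_representation} and Proposition~\ref{prop:HFsigmaIHFsigma}\ref{item:1complexIH} do all the work. The paper gets the non-Polishable $F_\sigma$ subgroup simply by citing \cite[Theorem 2]{MR2388789} (cf.\ \cite[Theorem 1.20]{MR3461178}), so your fallback \emph{is} the paper's proof.

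Your self-contained argument for $H=\langle C\rangle$, however, fails at a concrete step. The map you know to be continuous is $\mathrm{id}\colon (K_n,\tau)\to (K_n,\text{Euclidean})$. Its \emph{domain} is not known to be compact, so the compact-to-Hausdorff lemma does not apply. Likewise, ``$\tau$-convergence implies Euclidean convergence'' gives only one inclusion of topologies. The claim that $\tau$ is Euclidean on $K_n$ is false in general: $\mathbb Q/\mathbb Z$ with the discrete topology is Polishable, yet that topology is discrete on the Euclidean-compact set $\{0\}\cup\{1/m: m\ge 2\}$. So local compactness of $(H,\tau)$ is not established, and the ``delicate combinatorial step'' is left open. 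Your first heuristic is also insufficient: $\{x\in\mathbb T:\sum_n\|2^{n^2}x\|^2<\infty\}$ is uncountable, dense, meager and $\sigma$-compact, but it is Polishable via the complete, separable, invariant metric $\|x-y\|+(\sum_n\|2^{n^2}(x-y)\|^2)^{1/2}$.

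Your example does work, but the right intuition is the opposite of local compactness: $\tau$ would have to be almost discrete on $C$. Here is how to finish.
\begin{enumerate}
\item After Pettis you have a $\tau$-neighbourhood $V\subseteq K_n-K_n\subseteq K_{2n}$ of $0$. By independence, every element of $V$ has word length at most $2n$ in the free basis $C$.
\item Since $(C,\tau)$ is separable metrizable, only countably many points of $C$ are $\tau$-isolated in $C$. Pick distinct non-isolated $c_0,\dots,c_n\in C$, and $c_{k,i}\in C\setminus\{c_i\}$ with $c_{k,i}\to c_i$ in $\tau$.
\item Then $h_k:=\sum_{i\le n}(c_{k,i}-c_i)\to 0$ in $\tau$, so $h_k\in V$ for large $k$.
\item For large $k$ the $2n+2$ points $c_{k,i},c_i$ are pairwise distinct, since $\tau$ is Hausdorff. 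By independence, $h_k$ then has word length $2n+2$, a contradiction.
\end{enumerate}
With this argument, plus Mycielski's theorem to get $C$, your route gives a self-contained replacement for the citation.
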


\begin{remark}\label{rmk:polishabilityZ}
    It has been claimed in \cite[Proposition 2.4]{MR4078214} that the subgroup $\mathsf{H}_{\bm{a}}(\mathcal{Z})$ is Polishable for every sequence of natural numbers $\bm{a} \in \omega^\omega$. However, the argument given in \cite[Proposition 2.4]{MR4078214} appears to show that $\mathsf{H}_{\bm{a}}(\mathcal{Z})$, as a Borel subset of $\mathbb{T}$, admits a finer Polish topology. It does not seem to verify that this topology makes the group operations continuous, which is required for Polishability as a subgroup. 
    The same gap appears in \cite[Remark 1.12]{MR4780094}, replacing $\mathcal{Z}$ with an arbitrary analytic $P$-ideal. 
\end{remark}
Hence, we state the following open question.
\begin{question}\label{eq:polishability}
Is it true that, if $\mathcal{I}$ is an analytic $P$-ideal, every $\mathcal{I}$-characterized subgroup of $\mathbb{T}$ is Polishable?
\end{question}

\subsection{When the $\I$-characterized subgroup is the whole group $\mathbb{T}$?}

Next, we recover, with a different proof, the known result that only finitely supported sequences $\bm{a}$ generate the whole group $\mathbb{T}$, see \cite{Schoen}; cf. \cite[Theorem D]{MR4932230} and also \cite[Theorem 7.8]{MR419394} for a textbook exposition.
\begin{proposition}\label{prop:basiccase}
For each $\bm{a} \in \mathbb{Z}^\omega$, we have $\mathsf{H}_{\bm{a}}(\mathrm{Fin})=\mathbb{T}$ if and only if $\mathrm{supp}(\bm{a}) \in \mathrm{Fin}$. 
\end{proposition}

This leads us to the following question.
\begin{question}\label{question:support}
    For which ideals $\mathcal{I}$ on $\omega$ we have  
    \begin{equation}\label{eq:equivalencetorus}
    \mathsf{H}_{\bm{a}}(\mathcal{I})=\mathbb{T}
    \,\,\,\text{ if and only if }\,\,\,
    \mathrm{supp}(\bm{a}) \in \mathcal{I}
    \end{equation}
    for all integer sequences $\bm{a} \in \mathbb{Z}^\omega$? 
    In particular, does it hold for the ideal $\mathcal{Z}$? 
\end{question}

The first part of Question \ref{question:support} appeared also as \cite[Section 3, Question (i)]{MR4078214}; there, the authors claim that the study of Question \ref{question:support} is relatively easy, since using Proposition \ref{prop:basiccase} one can conclude the equivalence \eqref{eq:equivalencetorus} holds for the ideal $\mathcal{Z}$. However, we believe that Question \ref{question:support} is more delicate than what it seems at first. 

\begin{remark}\label{rmk:negative}
We remark that the equivalence \eqref{eq:equivalencetorus} does \emph{not} hold for all ideals $\mathcal{I}$ on $\omega$, even if we restrict to ideals with low topological complexities. In fact, choosing the thick sequence $\bm{u}$, it follows by Theorem \ref{thm:beigblock_representation} and Proposition \ref{prop:HFsigmaIHFsigma} that $\mathcal{J}_{\mathbb{T}}$ is an $F_\sigma$ ideal on $\omega$, $\mathsf{H}_{\bm{u}}(\mathcal{J}_{\mathbb{T}})=\mathbb{T}$, and $\mathrm{supp}(\bm{u})=\omega\setminus \{0\} \notin \mathcal{J}_{\mathbb{T}}$. In turn, this provides a negative answer to \cite[Question 7.22]{MR4932230}. 
\end{remark}

In the following result, we obtain a necessary technical condition on the integer sequence $\bm{a}$ to satisfy $\mathsf{H}_{\bm{a}}(\mathcal{I})=\mathbb{T}$.

\begin{proposition}\label{prop:necessaryfullT}
Let $\mathcal I$ be an ideal on $\omega$, and pick a sequence 
$\bm a=(a_n:n\in\omega)\in\mathbb Z^\omega$ such that $\mathrm{supp}(\bm{a}) \in \mathcal{I}^+$ and $\mathsf H_{\bm a}(\mathcal I)=\mathbb T$. 
Then, for each $S \in \mathcal{I}^+$ with $S\subseteq \mathrm{supp}(\bm{a})$, we have 
\begin{equation}\label{eq:hadamard}
\liminf_{j\to \infty}\frac{q_{j+1}}{q_j}=1,
\end{equation}
where $(q_j: j \in \omega)$ is the increasing enumeration of $\{|a_n|: n \in S\}$. 

\end{proposition}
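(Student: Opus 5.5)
We argue by contradiction. Suppose some $S \in \mathcal{I}^+$ with $S\subseteq \mathrm{supp}(\bm{a})$ has
\[
\liminf_{j} q_{j+1}/q_j > 1 .
\]
Then there are $\delta>0$ and $j_0$ with $q_{j+1}\ge (1+\delta)q_j$ for all $j\ge j_0$. So $\{|a_n|: n\in S\}$ is, up to finitely many terms, a lacunary (Hadamard) set. The plan is to build a single $x\in\mathbb{T}$ for which $\|a_nx\|$ stays bounded below by a fixed $\varepsilon>0$ for all but finitely many $n\in S$. Then the set
\[
\{n\in\omega: \|a_nx\|\ge \varepsilon\}
\]
contains $S$ minus a finite set. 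Since $\mathrm{Fin}\subseteq\mathcal{I}$ and $S\in\mathcal{I}^+$, this set lies in $\mathcal{I}^+$. Hence $x\notin \mathsf H_{\bm a}(\mathcal I)$, contradicting $\mathsf H_{\bm a}(\mathcal I)=\mathbb T$. Note that $\|a_nx\|=\||a_n|x\|$, so we may work with the values $q_j$; several indices $n\in S$ may share the same $|a_n|=q_j$, but they all give the same value, which is harmless.

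The core step is the classical fact that Hadamard lacunary sequences of integers admit a point badly approximable along them. For every $\delta>0$ there is $\varepsilon=\varepsilon(\delta)>0$ such that, whenever $q_{j+1}/q_j\ge 1+\delta$ for all $j\ge j_0$, some $x$ satisfies $\|q_jx\|\ge\varepsilon$ for all $j\ge j_0$. This goes back to Pollington and de Mathan.

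If I cannot cite it, I would give a simple nested-interval proof, first in the case $q_{j+1}/q_j\ge 3$ or some large fixed ratio $M$. The set
\[
\{x: \|q_jx\|\ge 1/4\}
\]
is a union of intervals of length $1/(2q_j)$, periodic with period $1/q_j$. Any closed interval of length $1/(2q_j)$ contains a full such good interval for $q_{j+1}$, provided $q_{j+1}\ge 4 q_j$ or so. Intersecting these nested compact intervals gives $x$.

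For a general ratio $1+\delta$, I would split the sequence $(q_j)_{j\ge j_0}$ into $r$ subsequences $(q_{j_0+i+rk})_k$ for $i<r$, each with ratio at least $(1+\delta)^r\ge M$. This alone does not give one $x$ working for all subsequences simultaneously. The better route is the standard counting/Cantor-set argument: at stage $j$, keep the subintervals of level $1/q_{j}$ on which $\|q_jx\|\ge\varepsilon$. With $\varepsilon$ small relative to $\delta$, each surviving interval contains at least one surviving interval of the next level. The obstacle is that consecutive $q_j$ may be nearly equal when $\delta$ is small, so a single step may not refine the interval. The fix is to group indices in blocks where $q$ grows by a factor of at least $4$. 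Each block contains at most $K=K(\delta)$ terms, so each block forbids at most $K$ sub-intervals of relative size $\approx\varepsilon$ inside an interval. Choosing $\varepsilon\ll 1/K$ leaves room at each stage. This block-counting step is the main technical point of the proof.

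Once such an $x$ exists, the contradiction above finishes the proof. The conclusion $\liminf_j q_{j+1}/q_j=1$ then follows, since $q_{j+1}>q_j$ forces the ratio to be at least $1$.
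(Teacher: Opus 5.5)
There is a real gap at the end of your argument, though it is easily repaired. Your core step gives $x$ with $\|q_jx\|\ge\varepsilon$ only for $j\ge j_0$. You then conclude that $\{n:\|a_nx\|\ge\varepsilon\}$ contains $S$ minus a finite set, and that is false in general. The omitted indices form the set $\{n\in S:|a_n|\in\{q_0,\dots,q_{j_0-1}\}\}$. This set takes only finitely many \emph{values}, but it may contain infinitely many, indeed $\mathcal I$-positively many, \emph{indices}, and $\mathrm{Fin}\subseteq\mathcal I$ says nothing about it. For the same reason you never show that $\{|a_n|:n\in S\}$ is infinite, which is needed for the $\liminf$ to make sense.

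The missing observation is the first step of the paper's proof: $A_q:=\{n:|a_n|=q\}\in\mathcal I$ for every $q\ge1$. Otherwise $x=1/(2q)$ satisfies $\|a_nx\|=1/2$ on $A_q$, contradicting $\mathsf H_{\bm a}(\mathcal I)=\mathbb T$. Consequently $\{|a_n|:n\in S\}$ is infinite, and $S_1:=\{n\in S:|a_n|=q_j\text{ for some }j\ge j_0\}\in\mathcal I^+$, because $S\setminus S_1$ is a finite union of sets $A_{q_j}$. Within your own framework you could instead note that every ratio $q_{j+1}/q_j$ exceeds $1$. Then $\liminf_j q_{j+1}/q_j>1$ forces $\inf_j q_{j+1}/q_j>1$, so you may take $j_0=0$; but the infinitude still has to be argued.

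With this repair your proof is correct, granted the Pollington--de Mathan theorem, but it is much heavier than needed. You considered splitting $(q_j)_{j\ge j_0}$ into $r$ residue classes and dismissed it because no single $x$ handles all classes at once. The paper's point is that one class suffices. Let $R_s$ be the set of $n\in S_1$ with $|a_n|=q_{j_0+s+tr}$ for some $t$. Then $S_1=R_0\cup\dots\cup R_{r-1}$, and closure of $\mathcal I$ under finite unions gives some $R_s\in\mathcal I^+$. For the single subsequence $b_t:=q_{j_0+s+tr}$, with $b_{t+1}>5b_t$, an elementary nested-interval argument gives $x$ with $\|b_tx\|\ge 1/4$ for all $t$. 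Hence $\{n:\|a_nx\|\ge1/4\}\supseteq R_s\in\mathcal I^+$. So the ideal structure replaces the Diophantine theorem entirely. Your route proves a stronger intermediate fact (one $x$ bounded away from $0$ along all of $S_1$) that the proposition does not need.

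Your fallback block-counting sketch also does not close with the stated parameters. With growth factor $4$ per block, the $K$ terms of a block can cut an interval of length about $1/Q$ into $O(K)$ pieces. You can then only guarantee a good piece of length about $1/(KQ)$. The ratio of interval length to current scale therefore shrinks by a factor of order $1/K$ per block, until a single forbidden arc can swallow the whole interval. You need a growth factor $M$ per block that is large compared with $K$. This is possible, since $K\approx\log M/\log(1+\delta)$, and then $\varepsilon$ of order $1/K^2$ suffices.
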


The above result provides a generalization of \cite[Theorem 5.16]{MR4932230}, which corresponds to the case where $\bm{a}$ is a strictly increasing sequence of positive integers, $S=\omega$, and the inferior limit in \eqref{eq:hadamard} is replaced by an infimum. 

Below, we will use another preorder between ideals.  
If $\mathcal I$ and $\mathcal J$ are ideals on countably infinite sets $X$ and $Y$, respectively, we say that $\mathcal{I}$ is below $\mathcal{J}$ in the \emph{Katětov order}, shortened as $\mathcal I\leq_{\mathrm{K}} \mathcal J$, if there exists a map $f:Y\to X$ such that $f^{-1}[A]\in\mathcal J$ for every
$A\in\mathcal I$. 
Hence, $f[B]\in\mathcal I^+$ for all $B \in \mathcal{J}^+$.

Let us also recall the definition of the ideal $\mathcal{ED}$, which is known to be useful in  the study of the Katětov order on Borel ideals \cite{MR3600759, MR4247792, MR3696069}. To this aim, for $A\subseteq\omega^2$ and $i\in\omega$, write $A_i:=\{j\in\omega:(i,j)\in A\}$. Then
$$
\mathcal{ED}:=
\left\{A\subseteq\omega^2:\sup_{i>k}|A_i|<\infty \text{ for some }k\in\omega\right\}.
$$
$\mathcal{ED}$ is critical for a selectivity property of ideals: for every ideal $\I$ on a countably infinite set $X$, $\mathcal{ED}\leq_{\mathrm{K}}\I$ is equivalent to the existence of a partition of $X$ into sets in $\I$ such that every selector of that partition is in $\I$ \cite[p. 57]{alcantara-phd-thesis}.

We will be interested in the following property of an ideal $\I$:
\begin{equation}\label{eq:technicalP-EDfin}
\mathcal{ED}\not\leq_{\mathrm{K}}\I\restriction A 
\,\,\,\text{ for all }A \in \I^+.
\end{equation}
By Hru\v{s}\'{a}k's Category Dichotomy \cite[Theorem 3.1]{MR3696069}, in the case of Borel ideals $\I$, (\ref{eq:technicalP-EDfin}) implies $\I\leq_{\mathrm{K}}\mathsf{nwd}$ (the ideal $\mathsf{nwd}$ is defined above Corollary \ref{cor:1P^-B}).

Accordingly, we provide a sufficient condition on $\I$ which satisfies the property stated in Question \ref{question:support}. 
\begin{theorem}\label{thm:1P^-}
Let $\I$ be an ideal on $\omega$ satisfying (\ref{eq:technicalP-EDfin}). Then for every $\bm{a} \in \mathbb{Z}^\omega$ we have 
$\mathsf{H}_{\bm{a}}(\mathcal{I})=\mathbb{T}$ if and only if $\mathrm{supp}(\bm{a}) \in \mathcal{I}$. 
\end{theorem}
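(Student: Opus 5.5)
The implication ``$\mathrm{supp}(\bm a)\in\mathcal I\Rightarrow\mathsf H_{\bm a}(\mathcal I)=\mathbb T$'' is immediate: for every $x\in\mathbb T$ and $\varepsilon>0$ we have $\{n:\|a_nx\|\ge\varepsilon\}\subseteq\mathrm{supp}(\bm a)\in\mathcal I$. For the converse I will argue by contradiction. Assume $\mathsf H_{\bm a}(\mathcal I)=\mathbb T$ while $S:=\mathrm{supp}(\bm a)\in\mathcal I^+$. The plan is to build a Katětov reduction $f\colon S\to\omega^2$ witnessing $\mathcal{ED}\le_{\mathrm K}\mathcal I\restriction S$, which contradicts (\ref{eq:technicalP-EDfin}) with $A:=S$.

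\textbf{The map $f$.} Group the indices of $S$ by the dyadic size of $|a_n|$. For $i\in\omega$ let
$$
B_i:=\{n\in S: 2^i\le |a_n|<2^{i+1}\},
$$
so that $S=\bigcup_i B_i$ is a partition. Define $f(n):=(i,j)$ when $n\in B_i$ and $n$ is the $j$-th element of $B_i$ in the increasing enumeration. Then $f$ is injective, and any $D\in\mathcal{ED}$ is contained in finitely many columns $\{i\}\times\omega$ together with a set meeting every column in at most $m$ points. Hence it suffices to verify two things.

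\textbf{Step 1: each column preimage $B_i$ is in $\mathcal I$.} The set $B_i$ is a finite union of the fibres $\{n\in S:|a_n|=k\}$ with $2^i\le k<2^{i+1}$, so it is enough to show each such fibre is in $\mathcal I$. Suppose a fibre $\{n\in S:|a_n|=k\}$ with $k\ge 1$ were $\mathcal I$-positive. Choose $x$ with $\|kx\|\ge 1/4$, for example $x=1/(2k)$. Then $\{n:\|a_nx\|\ge 1/4\}$ contains this positive fibre, so $x\notin\mathsf H_{\bm a}(\mathcal I)$, a contradiction.

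\textbf{Step 2: a set $T\subseteq S$ meeting every $B_i$ in at most $m$ points is in $\mathcal I$.} This is the heart of the proof, and I expect it to carry the main work. First split $T$ into $m$ pieces, each meeting every $B_i$ in at most one point. Then split each piece into two according to the parity of $i$. This gives $2m$ sets $P$, each of which has at most one index in each block and uses only blocks whose indices $i$ share a fixed parity.

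Fix such a $P$ and suppose $P\in\mathcal I^+$. Let $(q_j:j\in\omega)$ be the increasing enumeration of $\{|a_n|:n\in P\}$. Distinct elements of $P$ lie in distinct blocks, so their values of $|a_n|$ are distinct. Moreover, two consecutive values lie in blocks $i<i'$ with $i'\ge i+2$, which gives
$$
\frac{q_{j+1}}{q_j}>\frac{2^{i+2}}{2^{i+1}}=2 .
$$
Thus $\liminf_j q_{j+1}/q_j\ge 2$. On the other hand, $P\subseteq\mathrm{supp}(\bm a)$ and $P\in\mathcal I^+$, so Proposition \ref{prop:necessaryfullT} forces this $\liminf$ to equal $1$, a contradiction. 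Therefore every piece $P$ lies in $\mathcal I$, and $T$, a finite union of such pieces, lies in $\mathcal I$.

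\textbf{Conclusion.} Let $D\in\mathcal{ED}$. Then $f^{-1}[D]$ is the union of finitely many sets $B_i$, each in $\mathcal I$ by Step 1, and a set $T$ meeting each $B_i$ in at most $m$ points, which is in $\mathcal I$ by Step 2. Hence $f^{-1}[D]\in\mathcal I\restriction S$ for every $D\in\mathcal{ED}$, so $\mathcal{ED}\le_{\mathrm K}\mathcal I\restriction S$ with $S\in\mathcal I^+$. This contradicts (\ref{eq:technicalP-EDfin}), completing the proof.

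The only nontrivial analytic input is Proposition \ref{prop:necessaryfullT}, which is assumed here. The main care needed is in arranging Step 2 so that every piece has lacunary values; the parity splitting is what guarantees that consecutive ratios exceed $2$ rather than possibly being close to $1$.
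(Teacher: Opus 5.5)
Your proof is correct, and it takes a genuinely different route from the paper's.

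\textbf{The paper's route.} The paper proves the nontrivial direction constructively. It first handles the case of an $\I$-positive fibre $\{n: a_n=k\}$ with the point $x=1/(2k)$. Otherwise it invokes Lemma~\ref{lem:P^-}, which applies \eqref{eq:technicalP-EDfin} twice: once to select one index per value of $a_n$, and once to select one index from each block of a partition with large ratios between blocks. A parity split then yields an $\I$-positive set along which $a_{k(n+1)}/a_{k(n)}\to\infty$. Finally, Proposition~\ref{prop:interpolation} produces an explicit $x$ with $\|a_{k(n)}x\|\to 1/2$.

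\textbf{Your route.} You argue by contradiction and build a single explicit Katětov reduction from the dyadic partition $B_i=\{n\in S: 2^i\le |a_n|<2^{i+1}\}$.
\begin{itemize}
\item The sets $B_i$ lie in $\I$ by the same $1/(2k)$ trick as the paper's first case.
\item Every set meeting each $B_i$ in boundedly many points lies in $\I$ by Proposition~\ref{prop:necessaryfullT}. That proposition only needs consecutive ratios bounded away from $1$, and your parity split gives ratios $>2$.
\end{itemize}

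\textbf{What each buys.} Your approach bypasses Lemma~\ref{lem:P^-} and Proposition~\ref{prop:interpolation} entirely. It also uses \eqref{eq:technicalP-EDfin} only for $A=\mathrm{supp}(\bm a)$, whereas the paper applies it to the positive subsets $E_1,E_2$ built inside Lemma~\ref{lem:P^-}. So your argument actually yields a sharper pointwise statement: $\mathsf H_{\bm a}(\I)=\mathbb T$ together with $\mathrm{supp}(\bm a)\in\I^+$ forces $\mathcal{ED}\le_{\mathrm K}\I\restriction\mathrm{supp}(\bm a)$. The paper's approach gives an explicit witness $x$ with $\|a_nx\|\to 1/2$ along an $\I$-positive set. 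Its extraction technique is also reused in the proof of Theorem~\ref{thm:2P^-}.

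\textbf{A minor wording point.} $f^{-1}[D]$ is contained in, not equal to, the union of finitely many $B_i$ and your set $T$. This suffices since $\I$ is hereditary.
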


Next, we provide some practical instances where the hypotheses of Theorem \ref{thm:1P^-} are satisfied. 
Recall that an ideal $\mathcal I$ is \emph{tall} if every $A \in \mathrm{Fin}^+$ contains an infinite subset in $\mathcal{I}$. In the opposite direction, $\mathcal{I}$ is said to be \emph{nowhere tall} if for every $A\in\mathcal I^+$ there is an infinite set $B\subseteq A$ such that $\mathcal I\restriction B=[B]^{<\omega}$. Examples of nowhere tall ideals include countably generated ideals and ideals of the form $\{A\subseteq \omega: A\cap B \in \mathrm{Fin} \text{ for all $B \in \mathcal{B}$}\}$ where $\mathcal{B}$ is an almost disjoint family on $\omega$ (i.e.~$\cB$ consists of infinite subsets of $\omega$ and $B\cap C$ is finite for distinct $B,C\in \cB$); a characterization of nowhere tall ideals can be found in \cite[Theorem 5.3]{MR4041540}.

\begin{corollary}\label{cor:1P^-A} 
Let $\mathcal I$ be nowhere tall ideal. 
Then for every $\bm a\in\mathbb Z^\omega$ we have
$\mathsf H_{\bm a}(\mathcal I)=\mathbb T$ if and only if
$\operatorname{supp}(\bm a)\in\mathcal I$. 
\end{corollary}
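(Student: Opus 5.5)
The plan is to derive Corollary \ref{cor:1P^-A} from Theorem \ref{thm:1P^-}. It therefore suffices to show that every nowhere tall ideal $\mathcal I$ satisfies \eqref{eq:technicalP-EDfin}, that is, $\mathcal{ED}\not\le_{\mathrm K}\mathcal I\restriction A$ for all $A\in\mathcal I^+$.

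First we check that nowhere tallness passes to restrictions. Let $A\in\mathcal I^+$ and $B\in(\mathcal I\restriction A)^+$ with $B\subseteq A$. Then $B\in\mathcal I^+$. By nowhere tallness there is an infinite $C\subseteq B$ with $\mathcal I\restriction C=[C]^{<\omega}$, and $(\mathcal I\restriction A)\restriction C=\mathcal I\restriction C$. So it is enough to prove one claim: if $\mathcal J$ is a nowhere tall ideal on a countable set $X$, then $\mathcal{ED}\not\le_{\mathrm K}\mathcal J$.

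We argue by contradiction. Suppose $f:X\to\omega^2$ witnesses $\mathcal{ED}\le_{\mathrm K}\mathcal J$, so $f^{-1}[E]\in\mathcal J$ for every $E\in\mathcal{ED}$.
\begin{itemize}
\item Since $\mathcal J$ is proper, $X\in\mathcal J^+$. Nowhere tallness gives an infinite $C\subseteq X$ with $\mathcal J\restriction C=[C]^{<\omega}$.
\item For $E\in\mathcal{ED}$, the set $f^{-1}[E]\cap C$ lies in $\mathcal J\restriction C$, hence is finite.
\item Each column $\{i\}\times\omega$ is in $\mathcal{ED}$, so $f[C]$ meets each column in a set with only finitely many preimages in $C$. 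Since $C$ is infinite, $f[C]$ must therefore meet infinitely many columns.
\end{itemize}

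Now choose $c_k\in C$ with $f(c_k)$ lying in pairwise distinct columns $i_k$. The set $E=\{f(c_k):k\in\omega\}$ meets every column in at most one point, so it is a partial graph of a function and belongs to $\mathcal{ED}$. But $f^{-1}[E]\cap C\supseteq\{c_k:k\in\omega\}$ is infinite, which is a contradiction.

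The one subtle point is the bookkeeping between ideals on different underlying sets. One has to make sure that $\mathcal I\restriction A$ is treated as an ideal on $A$ and that nowhere tallness is inherited by restrictions; both are verified above. With \eqref{eq:technicalP-EDfin} established, Theorem \ref{thm:1P^-} yields the stated equivalence directly.
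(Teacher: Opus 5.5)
Your proof is correct and follows the paper's route. Both arguments reduce to Theorem \ref{thm:1P^-}: inside any $\mathcal I$-positive set they find an infinite $C$ with $\mathcal I\restriction C=[C]^{<\omega}$, and then refute a Katětov witness $f$ with an infinite $\mathcal{ED}$-set in $f[C]$, whose preimage meets $C$ in an infinite set. The differences are cosmetic: you build that $\mathcal{ED}$-set explicitly as a partial selector of columns, which also absorbs the paper's separate case where $f[B]$ is finite, instead of citing tallness of $\mathcal{ED}$; and you handle the restriction bookkeeping by showing that nowhere tallness passes to restrictions.
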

In particular, taking into account that $\mathrm{Fin}$ is countably generated (hence, nowhere tall), Corollary \ref{cor:1P^-A} provides a generalization of Proposition \ref{prop:basiccase} above. 

To provide additional examples, set $D:=\mathbb Q\cap[0,1]$, and denote by $\mathsf{nwd}$ and $\mathsf{null}$ the ideals on $D$ defined respectively by
$$
\mathsf{nwd}:=\{A\subseteq D: A\text{ is nowhere dense}\}
\,\,\,\text{ and }\,\,\,
\mathsf{null}:=\{A\subseteq D:\lambda(\overline A)=0\},
$$
where the closure is taken in the usual topology of $[0,1]$ and $\lambda$ is the Lebesgue measure on $[0,1]$, see \cite{MR1955288}. It is easy to see that both $\mathsf{nwd}$ and $\mathsf{null}$ are not nowhere tall ideals (hence not covered by Corollary \ref{cor:1P^-A}). 
As usual, ideals on countably infinite sets are considered up to isomorphism, through a given bijection on $\omega$.
\begin{corollary}\label{cor:1P^-B} 
Suppose that $\mathcal I=\mathsf{nwd}$ or $\mathcal{I}=\mathsf{null}$.  
Then for every $\bm a\in\mathbb Z^\omega$ we have
$\mathsf H_{\bm a}(\mathcal I)=\mathbb T$ if and only if
$\operatorname{supp}(\bm a)\in\mathcal I$. 
\end{corollary}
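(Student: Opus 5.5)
The plan is to deduce Corollary \ref{cor:1P^-B} from Theorem \ref{thm:1P^-}. So the task reduces to checking that $\mathsf{nwd}$ and $\mathsf{null}$ satisfy condition (\ref{eq:technicalP-EDfin}): $\mathcal{ED}\not\leq_{\mathrm{K}}\I\restriction A$ for every $A\in\I^+$.

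First observe that $\mathsf{null}\subseteq\mathsf{nwd}$. Indeed, a closed null set has empty interior, so if $\lambda(\overline A)=0$ then $\overline A$ is nowhere dense. Next, recall the standard Katětov facts. If $\I\subseteq\J$ are ideals on the same set, the identity map witnesses $\I\le_{\mathrm K}\J$. Restriction is handled analogously: for $A\in\J^+$ we have $A\in\I^+$, and $\I\restriction A\le_{\mathrm K}\J\restriction A$ via the identity. Hence it suffices to show $\mathcal{ED}\not\le_{\mathrm K}\mathsf{nwd}\restriction A$ for $A\in\mathsf{nwd}^+$, and to do the same for $\mathsf{null}\restriction A$ when $A\in\mathsf{null}^+$ (where $A$ need not be in $\mathsf{nwd}^+$, so a separate argument is needed).

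For $\mathsf{nwd}$, I would use homogeneity. If $A\notin\mathsf{nwd}$, then $\overline A$ contains an interval $U$, and $A\cap U$ is dense in $U$. The restriction $\mathsf{nwd}\restriction A$ is then essentially the nowhere dense ideal of the countable dense-in-itself metric space $A\cap U$ (plus a part that is nowhere dense in $U$). By Sierpiński's theorem, $A\cap U\cong\mathbb Q$, so this is isomorphic to $\mathsf{nwd}$, possibly "$\oplus$" something; a Katětov reduction to a restriction $\mathsf{nwd}\restriction(A\cap U)$ suffices. It therefore remains to prove $\mathcal{ED}\not\le_{\mathrm K}\mathsf{nwd}$. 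For this, suppose $f:\mathbb Q\to\omega^2$ with $f^{-1}[E]\in\mathsf{nwd}$ for all $E\in\mathcal{ED}$. Then each column preimage $V_i:=f^{-1}[\{i\}\times\omega]$ is nowhere dense. Enumerate basic open sets and build, by recursion, a sequence of points $q_k$ in distinct columns, choosing in each column at most one point, so that the selector set $\{q_k\}$ is somewhere dense. Concretely, we would repeatedly pick, inside the $k$th basic open set, a point not in the finitely many columns $V_{i}$ already used; this is possible since a finite union of nowhere dense sets is nowhere dense. The image of this set is a partial selector of the columns, hence in $\mathcal{ED}$, while its preimage contains a dense set, a contradiction. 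This is essentially the statement that $\mathsf{nwd}$ lacks the selectivity property recalled before (\ref{eq:technicalP-EDfin}).

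For $\mathsf{null}$, the same selector scheme should work. Given $A\in\mathsf{null}^+$, let $K=\overline A$ with $\lambda(K)>0$. If the columns $V_i\cap A$ all lie in $\mathsf{null}$, I would choose points of $A$ from new columns so that the closure of the chosen set has positive measure. I would aim to make the chosen set dense in $K':=$ the support of $\lambda\restriction K$ (points of $K$ each of whose neighbourhoods meet $K$ in positive measure). To do so, enumerate basic intervals $J$ meeting $K'$; then $\lambda(J\cap K)>0$, so $J\cap A$ is not covered by the finitely many null-closure columns already used. Hence a point in a new column exists, and the resulting closure contains $K'$, which has positive measure.

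The main obstacle is this last step. One has to check that "$J\cap A$ minus finitely many sets with null closure is nonempty." This holds because otherwise $\overline{J\cap A}\cap J$ would be covered by a null closed set, giving $\lambda(J\cap K)=0$. Making this precise is the key point, along with handling the role of $f^{-1}$ of finite-height sets, which amounts to ensuring we select at most finitely many points per column.
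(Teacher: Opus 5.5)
Your proposal is correct, but it reaches condition \eqref{eq:technicalP-EDfin} by a different route than the paper. Both arguments then conclude by applying Theorem \ref{thm:1P^-}.

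\textbf{The paper's route.} The proof there is two lines and relies entirely on citations. It uses that $\mathsf{nwd}$ and $\mathsf{null}$ are homogeneous ($\mathcal{I}\restriction A\cong\mathcal{I}$ for every $A\in\mathcal{I}^+$). It then reads off $\mathcal{ED}\not\le_{\mathrm K}\mathsf{nwd}$ and $\mathcal{ED}\not\le_{\mathrm K}\mathsf{null}$ from Hru\v{s}\'ak's Katětov diagram.

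\textbf{Your route.} You prove the non-reductions by hand with a diagonal selector recursion. Each column preimage $V_i$ lies in the ideal. Inside each relevant basic interval $J$ you pick a point of $A$ outside the finitely many columns already used. The chosen set then has exactly one point per column, so its image is in $\mathcal{ED}$, yet the set itself is $\mathcal{I}$-positive.

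\textbf{What your route buys.}
\begin{itemize}
\item It is self-contained.
\item For $\mathsf{null}$ it avoids homogeneity altogether. You work directly with an arbitrary $A\in\mathsf{null}^+$ through the support $K'$ of $\lambda\restriction\overline{A}$, and the key step is correct. If $J$ is open and meets $K'$, then $\lambda(J\cap\overline{A})>0$. Since $J\cap\overline{A}\subseteq\overline{J\cap A}$, the set $J\cap A$ cannot be covered by finitely many sets with null closure.
\item For $\mathsf{nwd}$ you do not even need Sierpi\'nski's theorem. The same recursion, run over basic intervals inside $U\subseteq\overline{A}$, works directly on $A\cap U$, because finitely many nowhere dense sets cannot cover the dense set $A\cap U$ in a subinterval of $U$.
\end{itemize}

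\textbf{Minor points.}
\begin{itemize}
\item Your preliminary reduction through $\mathsf{null}\subseteq\mathsf{nwd}$ is superfluous, since your $\mathsf{null}$ argument already covers every $A\in\mathsf{null}^+$.
\item You should state explicitly that $\lambda(\overline{A}\setminus K')=0$. This holds because $\overline{A}\setminus K'$ is covered by countably many rational intervals $W$ with $\lambda(W\cap\overline{A})=0$.
\item Your closing phrase ``at most finitely many points per column'' is not what $\mathcal{ED}$ needs; column sizes must be bounded. Your construction does give at most one point per column, so this is only a matter of wording.
\end{itemize}
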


\subsection{When there is a $\J$-characterized subgroup, which is not $\I$-characterized?}

In our last main result, we will use the sequence 
$$
\bm{b}=(b_n: n \in \omega) 
\quad \text{ defined by }\quad 
b_n:=2^n \text{ for all }n \in \omega. 
$$
We are going to provide sufficient conditions on ideals $\mathcal{I}$ and $\mathcal{J}$ for which $\mathscr{H}(\J)$ is not contained in $\mathscr{H}(\I)$, cf. also Remark \ref{rmk:notinclusion} below. 
We say that an ideal $\J$ on $\omega$ is translation invariant if for all $A\in\J$ and $n\in\omega$ we have $A-n:=\{k\in\omega:k+n\in A\}\in\J$.

\begin{theorem}\label{thm:2P^-}
Let $\I$ be an ideal which satisfies \eqref{eq:technicalP-EDfin}. Let also $\mathcal{J}$ be a translation invariant ideal, and suppose that 
\begin{equation}\label{eq:secondtechnicalcondition}
\J\not\leq_{\mathrm{K}}\I\restriction A \text{ for all }A\in\I^+.
\end{equation}
Then $\mathsf{H}_{\bm{b}}(\J)\notin \mathscr{H}(\I)$.
\end{theorem}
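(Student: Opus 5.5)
The plan is to argue by contradiction. Suppose $\mathsf{H}_{\bm{b}}(\mathcal{J})=\mathsf{H}_{\bm{a}}(\mathcal{I})$ for some $\bm{a}\in\mathbb{Z}^\omega$, and put $H:=\mathsf{H}_{\bm{b}}(\mathcal{J})$.

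First I record two properties of $H$.
\begin{enumerate}[label={\rm (\alph*)}]
\item $H\neq\mathbb{T}$. Indeed $\|2^n\cdot\tfrac13\|=\tfrac13$ for all $n$, so $\tfrac13\notin H$ because $\mathcal{J}$ is proper.
\item $H$ contains all dyadic rationals and, more generally, is closed under the "digit pattern" constructions below. This uses translation invariance of $\mathcal{J}$: if $2y=x\in H$, then $\{n:\|2^ny\|\ge\varepsilon\}$ is contained in $(\{n:\|2^nx\|\ge\varepsilon\}+1)\cup\{0\}$. I will only ever need sets of the form $C-i$, which lie in $\mathcal{J}$ by hypothesis.
\end{enumerate}
By Theorem \ref{thm:1P^-}, (a) gives $A:=\mathrm{supp}(\bm{a})\in\mathcal{I}^+$.

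Next I refine $A$ using \eqref{eq:technicalP-EDfin}.

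\emph{Bounded case.} Suppose some value $c\neq 0$ satisfies $\{n\in A:a_n=c\}\in\mathcal{I}^+$. Then $H\subseteq\{x:cx=0\}$, which is finite. This contradicts the fact that $H$ contains all dyadic rationals.

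\emph{Unbounded case.} Otherwise the fibres $\{n\in A:a_n=c\}$ form a partition of $A$ into sets in $\mathcal{I}$. Since $\mathcal{ED}\not\le_{\mathrm K}\mathcal{I}\restriction A$, the selector characterization of $\mathcal{ED}$ quoted before \eqref{eq:technicalP-EDfin} yields an $\mathcal{I}$-positive set meeting each fibre in at most $k$ points. Splitting it into $k$ selectors, one of them is $\mathcal{I}$-positive. So I obtain $B\subseteq A$ with $B\in\mathcal{I}^+$ on which $n\mapsto a_n$ is injective. Since finitely many points can be discarded, I may also assume $|a_n|\to\infty$ along $B$.

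Then I define $f:B\to\omega$ by $f(n):=\lfloor\log_2|a_n|\rfloor$. Applying \eqref{eq:secondtechnicalcondition} to the set $B$, the map $f$ does not witness $\mathcal{J}\le_{\mathrm K}\mathcal{I}\restriction B$. Hence there is $C\in\mathcal{J}$ with $D:=f^{-1}[C]\in\mathcal{I}^+$.

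Now I apply \eqref{eq:technicalP-EDfin} again to $\mathcal{I}\restriction D$, this time with the partition of $D$ into blocks $\{n\in D:f(n)\in[2^j,2^{j+1})\}$ or similar finite-to-one groupings. The goal is an $\mathcal{I}$-positive set $D'\subseteq D$ along which the exponents $f(n)$ are extremely lacunary: $f(n')-f(n)\to\infty$ very fast for consecutive elements $n<n'$ of $D'$. Alternatively, I split $D$ into finitely many pieces with this spacing property and keep a positive one. I expect this step to need some care, and I would try to mimic the mechanism of Proposition \ref{prop:necessaryfullT}.

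Finally, I construct a witness $x$ digit by digit in binary. All digits of $x$ are $0$ except in short windows $[f(n),f(n)+L]$ for $n\in D'$, where $L$ is a fixed constant. Inside each window, working inductively along $D'$, I choose the digits so that $\|a_nx\|\ge\tfrac14$. This is possible because $2^{f(n)}\le|a_n|<2^{f(n)+1}$: changing digits near position $f(n)$ moves $a_nx$ by a controlled amount of order $1$. The earlier windows contribute a fixed value, and the later windows contribute a negligible tail thanks to lacunarity.

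With this choice, $\{n\in D':\|a_nx\|\ge\tfrac14\}=D'\in\mathcal{I}^+$, so $x\notin\mathsf{H}_{\bm{a}}(\mathcal{I})$. On the other hand, for every $\varepsilon>0$ there is $r$ such that $\|2^kx\|\ge\varepsilon$ only when $k\in\bigcup_{i\le r}(C-i)$ up to a finite set. Since $\mathcal{J}$ is translation invariant, this set lies in $\mathcal{J}$, so $x\in H$. This is the desired contradiction.

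The main obstacle is this last construction. I must check that the bad set for $(2^kx)$ only involves shifts $C-i$ with $i\ge0$, which is the direction that translation invariance controls. Digits lying ahead of position $k$ are what make $\|2^kx\|$ large, so the bad $k$ sit just below window starts, i.e.\ in $C-i$. I must also arrange the spacing so that the interference between windows is negligible. If this direction issue fails, I would instead define $f(n):=\lfloor\log_2|a_n|\rfloor-L$ and place the windows accordingly.
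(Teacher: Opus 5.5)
Your route is genuinely different from the paper's, and it can be made to work. You use multi-digit binary windows at the magnitude scale $e_n:=\lfloor\log_2|a_n|\rfloor$, whereas the paper uses single digits at the $2$-adic scale $\nu_2(a_n)$. However, the last step, $x\in\mathsf{H}_{\bm b}(\mathcal J)$, fails as you set it up.

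\emph{The gap.} The paper's translation invariance is one-sided: it gives $C-j\in\mathcal J$ for $j\ge 0$ and says nothing about forward translates $C+j$. One-sided invariance does not imply two-sided invariance. For example, the ideal generated by the sets $A-n$ ($n\in\omega$), with $A=\{4^k:k\in\omega\}$, is invariant in the paper's sense but does not contain $A+1$. Now, $\|2^kx\|\ge 2^{-i}$ forces a nonzero digit at some position $p$ with $k<p\le k+i+1$. This holds for \emph{every} nonzero digit of $x$, not only the first digit of a window. With windows $[f(n),f(n)+L]$ anchored at $f(n)\in C$, a nonzero interior digit at $f(n)+q$ makes $k=f(n)+q-1$ bad. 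For $q\ge 2$ these $k$ lie in $C+1,\dots,C+L-1$. So the claim that the bad $k$ sit just below window starts is false. Your bad set is controlled only if $\mathcal J$ is also forward-invariant, which holds for $\mathcal Z$ and $\mathcal I_{1/n}$ but is not assumed in the theorem.

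Your fallback $\lfloor\log_2|a_n|\rfloor-L$ moves in the wrong direction. Either the window still extends forward from the point of $C$, or it sits at positions $p\le e_n$. There $a_n2^{-p}$ has size at least $1$ and is governed by the low-order bits of $a_n$; for $a_n=2^{e_n}$ those digits contribute only integers, so you lose control of $a_nx$. Your remark (b) about $2y=x$ also uses the forward shift $E+1$, though you never need it.

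\emph{The repair.} Anchor the Katětov map at the \emph{last} position of each window.
\begin{enumerate}[label={\rm (\arabic*)}]
\item Put $g(n):=e_n+L$ on $B$. Since $n\mapsto a_n$ is injective on $B$, $g$ is finite-to-one. Then \eqref{eq:secondtechnicalcondition} gives $C\in\mathcal J$ with $D:=g^{-1}[C]\in\mathcal I^+$.
\item Split $D$ into the finite sets $\{n\in D:g(n)\in[(L+5)j,(L+5)(j+1))\}$. Take an $\mathcal I$-positive selector using $\mathcal{ED}\not\le_{\mathrm K}\mathcal I\restriction D$, then keep the even or the odd blocks. This gives $D'\in\mathcal I^+$ on which consecutive values of $g$ differ by at least $L+6$. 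Lemma \ref{lem:P^-}, not Proposition \ref{prop:necessaryfullT}, is the model here.
\item Use the windows $[g(n)-L,g(n)]=[e_n,e_n+L]$. Since $|a_n|2^{-e_n}\in[1,2)$, the window contributes $\pm|a_n|2^{-e_n}z$, with $z$ running through the multiples of $2^{-L}$ in $[0,2)$. So, whatever the fixed contribution of earlier windows, you can land within $2^{-L}$ of $\tfrac12$.
\item Later windows contribute less than $2^{e_n+1}\cdot 2^{1-e_{n'}}\le 2^{-L-4}$. Hence $\|a_nx\|>\tfrac14$ for $L\ge 3$.
\item All nonzero digits lie in $\bigcup_{\ell\le L}(C-\ell)$. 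Therefore $\{k:\|2^kx\|\ge 2^{-i}\}\subseteq\bigcup_{\ell\le L}\bigcup_{1\le j\le i+1}(C-(\ell+j))\in\mathcal J$.
\end{enumerate}

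\emph{Comparison with the paper.} With this correction your argument is a valid alternative. The paper takes $f(n)=\nu_2(a_n)$ and $x=\sum_t 2^{-f(k(m(t)))-1}$. Earlier digits then contribute integers to $a_{k(m(t))}x$ and the current digit contributes exactly $\tfrac12$, so no approximation is needed. The single digit sits one step past a point of $f[D]$, so only the translates $f[D]-j$ with $j\ge 0$ appear. The price is a separate case when some valuation class is $\mathcal I$-positive (handled by $x=2^{-k-1}$), plus a tail condition built into the blocks. Your magnitude version needs only injectivity of $a_n$ on a positive set and bounds the tail automatically, at the cost of the window bookkeeping.
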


Some consequences are immediate. 
The first one is a generalization of \cite[Proposition 3.2]{MR4078214} and \cite[Theorem 2.7]{MR4436350} (in case of the sequence $\bm{b}=(2^n: n \in \omega)$).
\begin{corollary}\label{cor:Hpowernotfin}
Let $\mathcal{J}$ be a tall translation invariant ideal on $\omega$ \textup{(}for instance, $\mathcal{Z}$ or the summable ideal $\mathcal{I}_{1/n}:=\{A\subseteq\omega:\sum_{n\in A}\frac{1}{n+1}<\infty\}$\textup{)}. Then $\mathsf{H}_{\bm{b}}(\J)\notin \mathscr{H}(\mathrm{Fin})$.
\end{corollary}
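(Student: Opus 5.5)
The plan is to derive this from Theorem \ref{thm:2P^-} with $\I=\mathrm{Fin}$. Two hypotheses of that theorem have to be checked for $\I=\mathrm{Fin}$.

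First, $\mathrm{Fin}$ satisfies \eqref{eq:technicalP-EDfin}. For every $A\in\mathrm{Fin}^+$, i.e.\ every infinite $A$, the restriction $\mathrm{Fin}\restriction A$ is isomorphic to $\mathrm{Fin}$ on $\omega$. So it suffices to show $\mathcal{ED}\not\le_{\mathrm K}\mathrm{Fin}$. Suppose $f:\omega\to\omega^2$ had $f^{-1}[E]$ finite for every $E\in\mathcal{ED}$. Each vertical column $\{i\}\times\omega$ lies in $\mathcal{ED}$, and so does any finite union of columns. Hence each column has finite preimage, and the image $f[\omega]$ meets infinitely many columns; otherwise $f^{-1}[f[\omega]]=\omega$ would be finite.

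Now choose a selector $E$ consisting of one point of $f[\omega]$ from each column that $f[\omega]$ meets. Then $E\in\mathcal{ED}$, since all its sections have size at most $1$. But $f^{-1}[E]$ is infinite, because it contains at least one point for each of infinitely many columns met. This is a contradiction. (Alternatively, one could cite that $\mathrm{Fin}$ is nowhere tall, which is the same reasoning used for Corollary \ref{cor:1P^-A}; but the direct argument is short.)

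Second, we need $\J\not\le_{\mathrm K}\mathrm{Fin}\restriction A$ for every infinite $A$, which again reduces to $\J\not\le_{\mathrm K}\mathrm{Fin}$. Suppose $f:\omega\to\omega$ witnessed $\J\le_{\mathrm K}\mathrm{Fin}$, so $f^{-1}[B]$ is finite for all $B\in\J$. Each singleton lies in $\J$, so $f$ is finite-to-one and $f[\omega]$ is infinite. By tallness of $\J$, the infinite set $f[\omega]$ contains an infinite subset $B\in\J$. Then $f^{-1}[B]$ is infinite, since it surjects onto $B$, which is a contradiction.

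Third, $\J$ is translation invariant by assumption, so Theorem \ref{thm:2P^-} yields $\mathsf H_{\bm b}(\J)\notin\mathscr H(\mathrm{Fin})$.

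Finally, the examples $\mathcal Z$ and $\mathcal I_{1/n}$ need a quick check. Both are tall, because every infinite set has an infinite subset that is sparse enough to have density zero and a summable reciprocal sum. Both are translation invariant, because shifting by $n$ preserves density zero and changes $\sum 1/(k+1)$ only by a bounded factor termwise. No step is a real obstacle; the only point needing care is the Katětov nonreductions, and they are elementary.
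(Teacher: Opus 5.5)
Your proposal is correct and follows the paper's proof. Both apply Theorem~\ref{thm:2P^-} with $\mathcal I=\mathrm{Fin}$ and use $\mathrm{Fin}\restriction A\cong\mathrm{Fin}$ to reduce the two hypotheses to $\mathcal{ED}\not\le_{\mathrm K}\mathrm{Fin}$ and $\mathcal J\not\le_{\mathrm K}\mathrm{Fin}$, the latter via tallness. The only difference is that you prove $\mathcal{ED}\not\le_{\mathrm K}\mathrm{Fin}$ directly with a selector argument, while the paper cites the nowhere-tallness argument from the proof of Corollary~\ref{cor:1P^-A}; you also check the examples $\mathcal Z$ and $\mathcal I_{1/n}$, which the paper leaves implicit.
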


On a similar note, in the case $\mathcal{J}=\mathcal{Z}$, we have also the following:
\begin{corollary}\label{cor:caseZ}
    $\mathsf{H}_{\bm{b}}(\mathcal{Z})\notin \mathscr{H}(\mathsf{nwd})$ and $\mathsf{H}_{\bm{b}}(\mathcal{Z})\notin \mathscr{H}(\mathsf{null})$. 
\end{corollary}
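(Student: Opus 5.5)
Corollary \ref{cor:caseZ} will follow from Theorem \ref{thm:2P^-} with $\mathcal{J}=\mathcal{Z}$ and $\mathcal{I}\in\{\mathsf{nwd},\mathsf{null}\}$, both transported to $\omega$ via a fixed bijection with $D$. The ideal $\mathcal{Z}$ is translation invariant, since shifting a set by $n$ changes its counting function by at most $n$. So two hypotheses remain to be checked for $\mathcal{I}=\mathsf{nwd}$ and $\mathcal{I}=\mathsf{null}$: property \eqref{eq:technicalP-EDfin}, and $\mathcal{Z}\not\le_{\mathrm K}\mathcal{I}\restriction A$ for every $A\in\mathcal{I}^+$.

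Property \eqref{eq:technicalP-EDfin} for $\mathsf{nwd}$ and $\mathsf{null}$ was already needed for Corollary \ref{cor:1P^-B}, so I take it from there. If it has to be reproved, the plan is as follows. Suppose $A\in\mathcal{I}^+$. Then $\overline A$ contains an open interval in the $\mathsf{nwd}$ case, or has positive measure in the $\mathsf{null}$ case. Let $f\colon A\to\omega^2$ witness $\mathcal{ED}\le_{\mathrm K}\mathcal{I}\restriction A$. The columns $f^{-1}[\{i\}\times\omega]$ lie in $\mathcal{I}$, so they are nowhere dense (respectively have null closure). One then builds a selector-type set $B\subseteq A$ meeting each column in at most one point, arranged so that $\overline B$ is somewhere dense (respectively of positive measure). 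Then $B\in\mathcal{I}^+$ while $f[B]$ is a graph of a partial function, hence lies in $\mathcal{ED}$, which is a contradiction.

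The main step is $\mathcal{Z}\not\le_{\mathrm K}\mathcal{I}\restriction A$. The ideal $\mathcal{I}\restriction A$ is again of the same type: $\mathsf{nwd}\restriction A$ is essentially $\mathsf{nwd}$ on a countable dense subset of an interval, and $\mathsf{null}\restriction A$ is the analogous ideal for a positive-measure closed set. It therefore suffices to show $\mathcal{Z}\not\le_{\mathrm K}\mathsf{nwd}$ and $\mathcal{Z}\not\le_{\mathrm K}\mathsf{null}$, in a form robust under restriction. Here I use the Category Dichotomy cited before Theorem \ref{thm:1P^-}. The ideals $\mathcal{I}\restriction A$ are Borel and satisfy \eqref{eq:technicalP-EDfin}, hence $\mathcal{I}\restriction A\le_{\mathrm K}\mathsf{nwd}$. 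If $\mathcal{Z}\le_{\mathrm K}\mathcal{I}\restriction A$ held, transitivity would give $\mathcal{Z}\le_{\mathrm K}\mathsf{nwd}$.

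So everything reduces to $\mathcal{Z}\not\le_{\mathrm K}\mathsf{nwd}$. I would prove this through the other half of Hrušák's dichotomy: $\mathcal{J}\le_{\mathrm K}\mathsf{nwd}$ holds exactly when $\mathcal{J}$ is not "$\mathcal{ED}$-hereditarily" positive. Concretely, I would exhibit a $\mathcal{Z}$-positive set $X$ with $\mathcal{ED}\le_{\mathrm K}\mathcal{Z}\restriction X$, or directly use that $\mathcal{Z}$ is an analytic $P$-ideal which is not $F_\sigma$, and hence not Katětov below $\mathsf{nwd}$. The direct argument runs as follows. Take a map $f\colon D\to\omega$ witnessing the Katětov reduction, and enumerate the basic intervals of $D$. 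Recursively choose finite sets $F_k\subseteq\omega$ of density-like weight at least $1/2$ on the $k$-th dyadic block, such that $f^{-1}[F_k]$ avoids a subinterval of the $k$-th interval. This is possible because each block admits $\mathcal{Z}$-small pieces whose union is $\mathcal{Z}$-positive. Then $\bigcup_k F_k\notin\mathcal{Z}$, while its preimage is nowhere dense, a contradiction. This recursive construction is the main obstacle, and it must be carried out carefully using the fact that density is computed blockwise.
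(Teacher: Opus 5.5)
Your overall plan matches the paper's. You apply Theorem \ref{thm:2P^-} with $\mathcal{J}=\mathcal{Z}$, import \eqref{eq:technicalP-EDfin} for $\mathsf{nwd}$ and $\mathsf{null}$ from Corollary \ref{cor:1P^-B}, and verify \eqref{eq:secondtechnicalcondition}. Your reduction of \eqref{eq:secondtechnicalcondition} to the single statement $\mathcal{Z}\not\le_{\mathrm K}\mathsf{nwd}$ is also correct: the Category Dichotomy gives $\mathcal{I}\restriction A\le_{\mathrm K}\mathsf{nwd}$, and then you use transitivity. It even avoids the homogeneity of $\mathsf{null}$ that the paper uses.

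The gap is in the step you yourself call the main obstacle: none of your three justifications of $\mathcal{Z}\not\le_{\mathrm K}\mathsf{nwd}$ works.
\begin{itemize}
\item Your ``direct argument'' has the Kat\v{e}tov order backwards. If $f\colon D\to\omega$ witnesses $\mathcal{Z}\le_{\mathrm K}\mathsf{nwd}$, then $f^{-1}[E]\in\mathsf{nwd}$ for every $E\in\mathcal{Z}$. A contradiction therefore needs a \emph{density-zero} set with \emph{somewhere dense} preimage. A $\mathcal{Z}$-positive set $\bigcup_k F_k$ with nowhere dense preimage is perfectly compatible with $f$ being a reduction, so no contradiction arises.
\item The claim ``analytic $P$-ideal, not $F_\sigma$, hence not Kat\v{e}tov below $\mathsf{nwd}$'' is false. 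The ideal of all $E\subseteq\omega^2$ whose vertical sections are all finite is a Borel $P$-ideal and is not $F_\sigma$. It satisfies \eqref{eq:technicalP-EDfin}: a positive set has an infinite section, and that section must meet infinitely many pieces of any partition into ideal sets, so it has a selector outside the ideal. Hence it is $\le_{\mathrm K}\mathsf{nwd}$ by the Category Dichotomy.
\item The dichotomy is not an ``exactly when''; its two alternatives can hold simultaneously. The direct sum of $\mathcal{ED}$ and $\mathsf{nwd}$ is Kat\v{e}tov below $\mathsf{nwd}$, yet it has a positive restriction equal to $\mathcal{ED}$. So exhibiting some positive $X$ with $\mathcal{ED}\le_{\mathrm K}\mathcal{Z}\restriction X$ proves nothing by itself.
\end{itemize}

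The missing ingredient is the easy fact $\mathcal{ED}\le_{\mathrm K}\mathcal{Z}$, that is, your first idea with $X=\omega$. Send each block $[2^i-1,2^{i+1}-1)$, $i\in\omega$, injectively into the column $\{i\}\times\omega$; these blocks partition $\omega$. The preimage of a set in $\mathcal{ED}$ is then a finite set together with a set meeting each block in at most $M$ points, so it has density zero.

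With this fact, $\mathcal{Z}\le_{\mathrm K}\mathcal{I}\restriction A$ would give $\mathcal{ED}\le_{\mathrm K}\mathcal{I}\restriction A$, contradicting \eqref{eq:technicalP-EDfin} outright. Thus \eqref{eq:secondtechnicalcondition} holds for $\mathcal{J}=\mathcal{Z}$ and every $\mathcal{I}$ satisfying \eqref{eq:technicalP-EDfin}. You need neither the Category Dichotomy, nor homogeneity, nor the external citation the paper uses for $\mathcal{Z}\not\le_{\mathrm K}\mathsf{nwd}$ and $\mathcal{Z}\not\le_{\mathrm K}\mathsf{null}$.

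If you prefer your direct route, run it in the right direction:
\begin{itemize}
\item Enumerate the rational intervals $J_k$.
\item Pick $q_k\in J_k\cap D$ outside $f^{-1}$ of the finitely many blocks already hit. That union of blocks is a finite set, so its preimage is nowhere dense, and such $q_k$ exists.
\item Then $\{f(q_k):k\in\omega\}$ meets each block at most once, hence lies in $\mathcal{Z}$, while its preimage contains the dense set $\{q_k:k\in\omega\}$. This is the required contradiction.
\end{itemize}
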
 
Observe that both statements are stronger than $\mathsf{H}_{\bm{b}}(\mathcal{Z})\notin \mathscr{H}(\mathrm{Fin})$: in fact, both $\mathsf{nwd}$ and $\mathsf{null}$ are $F_{\sigma\delta}$ ideals \cite{MR1955288}, hence $\mathscr{H}(\mathrm{Fin})$ is contained in both $\mathscr{H}(\mathsf{nwd})$ and $\mathscr{H}(\mathsf{null})$ by Corollary \ref{RB:corollary}. 

As a last application, we consider the case of meager ideals. 
\begin{corollary}\label{cor:meager}
    Let $\mathcal{J}$ be a tall translation invariant meager ideal. Then 
    $$
    \mathscr{H}(\mathrm{Fin})\subsetneq \mathscr{H}(\mathcal{J}).
    $$
\end{corollary}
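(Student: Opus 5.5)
We need to prove Corollary \ref{cor:meager}: if $\mathcal{J}$ is a tall translation invariant meager ideal, then $\mathscr{H}(\mathrm{Fin})\subsetneq \mathscr{H}(\mathcal{J})$. The plan has two steps: inclusion from Corollary \ref{RB:corollary}, and strictness from Corollary \ref{cor:Hpowernotfin}.

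\emph{Inclusion.} Since $\mathcal{J}$ is meager, Corollary \ref{RB:corollary} gives
$$
\mathscr{H}(\mathrm{Fin})\subseteq \mathscr{H}(\mathcal{J})\cap\mathscr{H}^\star(\mathcal{J})\subseteq \mathscr{H}(\mathcal{J}).
$$
Behind this lies Talagrand's characterization of meager ideals: there is a finite-to-one $f\colon\omega\to\omega$ with $f^{-1}[F]\in\mathcal{J}$ iff $F$ is finite, i.e.\ $\mathrm{Fin}\le_{\mathrm{RB}}\mathcal{J}$. Together with Theorem \ref{thm:RKinclusion}, this is how the paper proves that corollary. We simply quote it.

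\emph{Strictness.} Consider the subgroup $\mathsf{H}_{\bm b}(\mathcal{J})$, where $\bm b=(2^n:n\in\omega)$.
\begin{itemize}
\item By definition, $\mathsf{H}_{\bm b}(\mathcal{J})\in\mathscr{H}(\mathcal{J})$.
\item Since $\mathcal{J}$ is tall and translation invariant, Corollary \ref{cor:Hpowernotfin} gives $\mathsf{H}_{\bm b}(\mathcal{J})\notin\mathscr{H}(\mathrm{Fin})$.
\end{itemize}
So $\mathsf{H}_{\bm b}(\mathcal{J})$ witnesses that the inclusion is proper.

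If one wants to go below the corollaries, Corollary \ref{cor:Hpowernotfin} follows from Theorem \ref{thm:2P^-} with $\mathcal{I}=\mathrm{Fin}$. Two hypotheses need checking.
\begin{itemize}
\item \emph{Condition \eqref{eq:technicalP-EDfin} for $\mathrm{Fin}$.} $\mathrm{Fin}$ is nowhere tall, and more directly no map $f$ from an infinite $A$ can witness $\mathcal{ED}\le_{\mathrm K}\mathrm{Fin}\restriction A$. Indeed, the columns $\{i\}\times\omega$ lie in $\mathcal{ED}$, so each $f^{-1}[\{i\}\times\omega]$ would have to be finite. Choosing one point from infinitely many of these nonempty preimages gives an infinite $B\subseteq A$ whose image $f[B]$ is a partial selector, hence lies in $\mathcal{ED}$, while $f^{-1}[f[B]]\supseteq B$ is infinite. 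If instead only finitely many columns are hit, then $f[A]$ is contained in finitely many columns, which is a set in $\mathcal{ED}$ with infinite preimage $A$. Either way $f$ fails.
\item \emph{Condition \eqref{eq:secondtechnicalcondition} for $\mathcal{J}$.} Suppose $f\colon A\to\omega$ witnessed $\mathcal{J}\le_{\mathrm K}\mathrm{Fin}\restriction A$. If $f$ has an infinite fiber, that fiber is the preimage of a singleton, which lies in $\mathcal{J}$; contradiction. Otherwise $f[A]$ is infinite, so by tallness it contains an infinite $C\in\mathcal{J}$, and then $f^{-1}[C]$ is infinite; contradiction.
\end{itemize}

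The main obstacle is therefore entirely inside the earlier results, namely the proof of Theorem \ref{thm:2P^-}. At the level of this corollary the argument is just the combination of the two corollaries above, and no new construction is needed.
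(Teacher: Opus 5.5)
Your proposal is correct and follows the paper's proof exactly. The inclusion comes from Corollary~\ref{RB:corollary}, via Talagrand's characterization and Theorem~\ref{thm:RKinclusion}, and strictness is witnessed by $\mathsf{H}_{\bm b}(\mathcal{J})$ through Corollary~\ref{cor:Hpowernotfin}. Your direct verification that $\mathrm{Fin}$ satisfies \eqref{eq:technicalP-EDfin} and $\mathcal{J}$ satisfies \eqref{eq:secondtechnicalcondition} is also sound, and it spells out the tallness step the paper leaves implicit.
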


The proofs of the main results are given in Section \ref{sec:proofs}.

\section{Auxiliary Results}\label{sec:preliminaries}

Here, 
we list some preliminary results which will be needed to prove our main results.
\begin{proposition}\label{prop:interpolation}
Let $(a_n: n \in \omega)$ be a sequence of positive integers such that $\lim_n a_{n+1}/a_n=\infty$. Then for every sequence $(t_n:n\in\omega)$ in $\mathbb T$, there exists $x\in\mathbb T$ such that 
    $$
    \lim_{n\to \infty} \|a_nx-t_n\|=0. 
    $$
\end{proposition}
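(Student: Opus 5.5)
We will construct $x$ as the intersection point of a nested sequence of closed arcs (intervals in $\mathbb R$) $I_{n_0}\supseteq I_{n_0+1}\supseteq\cdots$, where $n_0$ is chosen large. The arc $I_n$ will be chosen so that every point $y\in I_n$ satisfies $\|a_ny-t_n\|\le \delta_n$, where $\delta_n:=\sqrt{a_n/a_{n+1}}$. By the hypothesis $\lim_n a_{n+1}/a_n=\infty$, we have $\delta_n\to 0$. Any $x\in\bigcap_{n\ge n_0}I_n$, which is nonempty by compactness, then satisfies $\|a_nx-t_n\|\le\delta_n\to 0$ for $n\ge n_0$. The first finitely many indices are irrelevant for the limit.

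\textbf{Key step.} Fix representatives $t_n\in[0,1)$ and look at the set
$$
\{y\in\mathbb R:\|a_ny-t_n\|\le\delta_n\}.
$$
It is the union of the closed intervals
$$
\left[\frac{k+t_n-\delta_n}{a_n},\ \frac{k+t_n+\delta_n}{a_n}\right],\qquad k\in\mathbb Z.
$$
Each of these intervals has length $2\delta_n/a_n$, and consecutive ones are spaced $1/a_n$ apart.

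Suppose $I_n$ is one of the intervals at level $n$, so it has length $2\delta_n/a_n$. We need $I_n$ to contain an entire interval of level $n+1$. That holds as soon as the length of $I_n$ exceeds the spacing plus the length at level $n+1$:
$$
\frac{2\delta_n}{a_n}\ \ge\ \frac{1}{a_{n+1}}+\frac{2\delta_{n+1}}{a_{n+1}}.
$$
Multiplying by $a_{n+1}$, this becomes
$$
2\delta_n\frac{a_{n+1}}{a_n}=2\sqrt{\frac{a_{n+1}}{a_n}}\ \ge\ 1+2\delta_{n+1}.
$$
The left side tends to $\infty$, while the right side is at most $3$ once $\delta_{n+1}\le 1$. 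So the inequality holds for all $n\ge n_0$ with $n_0$ suitably large.

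\textbf{Construction.} Choose such an $n_0$ and let $I_{n_0}$ be any interval at level $n_0$. Inductively, let $I_{n+1}$ be an interval at level $n+1$ contained in $I_n$. The intervals are nested, closed, and nonempty, with lengths tending to $0$, so they determine a unique $x$, and $x$ works by the key step.

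\textbf{Main obstacle.} The only delicate point is the choice of $\delta_n$. It must tend to $0$, yet the window of width $\delta_n/a_n$ must still be long compared to the period $1/a_{n+1}$ at the next level. The geometric-mean choice $\delta_n=\sqrt{a_n/a_{n+1}}$ balances these two requirements, and both hold precisely because $a_{n+1}/a_n\to\infty$.
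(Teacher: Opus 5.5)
Your proposal is correct and follows essentially the same route as the paper. Your $\delta_n=\sqrt{a_n/a_{n+1}}$ is exactly the paper's $\varepsilon_n=b_n^{-1/2}$, and you use the same nested-arc construction with the same containment inequality $2\delta_n a_{n+1}/a_n\ge 1+2\delta_{n+1}$.

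Working with the individual intervals indexed by $k$, rather than with components of the set, even lets you skip the paper's requirement $\varepsilon_n<\nicefrac12$.
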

\begin{proof} 
Define $b_{n}:=a_{n+1}/a_n$ and $\varepsilon_n:=b_n^{-1/2}$ for all $n \in \omega$, and observe that $\lim_n \varepsilon_n=0$ and $\lim_n \varepsilon_n b_n=\infty$.  
Pick $n_0 \in\omega$ such that $\varepsilon_n<1/2$ for all $n\ge n_0$, and define 
$$
E_n:=\{z\in\mathbb T:\|a_nz-t_n\|\le \varepsilon_n\}
$$
for all $n \in \omega$. 
It follows that, for all $n\ge n_0$, $E_n$ is
a union of $a_n$ closed arcs in $\mathbb T$, each of length $L_n:=2\varepsilon_n/a_n$. 
The centers of the components of $E_{n+1}$ are equally spaced with spacing
$d_{n+1}:=1/a_{n+1}$, and each component of $E_{n+1}$ has radius
$r_{n+1}:=\varepsilon_{n+1}/a_{n+1}$.

At this point, observe that a component of $E_{n+1}$ is contained in some component $J$ of $E_n$ provided that 
one of the centers of $E_{n+1}$ lies at distance at least
$r_{n+1}$ from both endpoints of $J$.
Such a center exists whenever
$$
    L_n-2r_{n+1}
    >
    d_{n+1}.
$$
The latter inequality can be rewritten equivalently as
$$
    2\varepsilon_nb_n
    >
    1+2\varepsilon_{n+1}.
$$
Since this inequality holds for all sufficiently large $n$, it is possible to pick an integer $n_1\ge n_0$ such that, for each $n\ge n_1$, every component of $E_n$ contains some component of $E_{n+1}$. Hence there exists a decreasing sequence $(J_n: n\ge n_1)$ of nonempty compact sets such that $J_n$ is a component of $E_n$ for each $n\ge n_1$. 

By compactness of $\mathbb T$, it is possible to pick $x \in \bigcap_{n\ge n_1}J_n$. Since $J_n\subseteq E_n$ for every $n\ge n_1$, we have $\|a_nx-t_n\|\le \varepsilon_n$ for all $n\ge n_1$. Since $\lim_n \varepsilon_n=0$, we conclude that $\lim_n \|a_nx-t_n\|=0$. 
\end{proof}

\medskip

\begin{lemma}\label{lem:obviousinclusion}
    Pick $\bm{a}, \bm{b} \in \mathbb{Z}^\omega$ such that $\bm{b}$ is a subsequence of $\bm{a}$. Then $\mathsf{H}_{\bm{a}}(\mathrm{Fin})\subseteq \mathsf{H}_{\bm{b}}(\mathrm{Fin})$. 
\end{lemma}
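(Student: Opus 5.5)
The argument is a direct unwinding of the definitions. Let $\bm b=(a_{n_k}:k\in\omega)$ for some strictly increasing sequence of indices $(n_k:k\in\omega)$, and fix $x\in\mathsf{H}_{\bm a}(\mathrm{Fin})$. We must show that $\lim_k \|b_kx\|=0$, that is, $\lim_k\|a_{n_k}x\|=0$.

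Fix $\varepsilon>0$. Since $x\in\mathsf{H}_{\bm a}(\mathrm{Fin})$, the set
$$
F:=\{n\in\omega:\|a_nx\|\ge\varepsilon\}
$$
is finite. Then
$$
\{k\in\omega:\|b_kx\|\ge\varepsilon\}=\{k\in\omega: n_k\in F\}.
$$
Because $k\mapsto n_k$ is injective, this set has at most $|F|$ elements, so it is finite. As $\varepsilon>0$ was arbitrary, $x\in\mathsf{H}_{\bm b}(\mathrm{Fin})$.

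There is no real obstacle here. The only point to keep in mind is that "subsequence" must mean strictly increasing, or at least injective, indices. With that reading, the preimage of a finite set under the index map is finite, and the same one-line argument works for any finite-to-one reindexing. If instead $\bm b$ were allowed to be an arbitrary reindexing of $\bm a$ with repetitions, this finiteness step could fail, so I would note that convention explicitly at the start of the proof.
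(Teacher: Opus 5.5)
Your proof is correct and is essentially the paper's argument: the paper just says that every subsequence of a sequence converging to $0$ also converges to $0$, and you have written this out through the finiteness of $\{k:n_k\in F\}$ under injective indexing. Your remark that injective (or finite-to-one) indexing is what makes the argument work is accurate, and it matches how the lemma is applied later, to a genuine subsequence with $\lim_n b_{n+1}/b_n=\infty$.
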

\begin{proof}
    This is clear by the fact that every subsequence of a convergent sequence is convergent to the same limit. 
\end{proof}

\begin{lemma}\label{lem:almostobvious}
    Let $\bm{a}\in \mathbb{Z}^\omega$ be a bounded sequence with $\mathrm{supp}(\bm{a})$ infinite.   
    Then $\mathsf{H}_{\bm{a}}(\mathrm{Fin})$ is finite. 
\end{lemma}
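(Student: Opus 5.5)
Since $\bm{a}$ is bounded, its range $\mathrm{ran}(\bm{a})$ is a finite set of integers. Because $\mathrm{supp}(\bm{a})$ is infinite, the pigeonhole principle gives some nonzero integer $c\in\mathrm{ran}(\bm{a})$ such that $S:=\{n\in\omega: a_n=c\}$ is infinite. The plan is to show that
$$
\mathsf{H}_{\bm{a}}(\mathrm{Fin})\subseteq \{x\in\mathbb{T}: cx=0\},
$$
and then observe that the right-hand side has exactly $|c|$ elements, namely $\{k/|c|: k=0,\ldots,|c|-1\}$.

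For the inclusion, take $x\in\mathsf{H}_{\bm{a}}(\mathrm{Fin})$, so that $a_nx\to 0$ in $\mathbb{T}$. The subsequence indexed by $S$ is the constant sequence $(cx: n\in S)$. It must converge to $0$ as well, which is the same reasoning as in Lemma \ref{lem:obviousinclusion}: pass to the subsequence along $S$. A constant sequence converges to $0$ only if it equals $0$, so $\|cx\|=0$, that is, $cx=0$ in $\mathbb{T}$.

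Finally, the solutions of $cx=0$ in $\mathbb{T}=\mathbb{R}/\mathbb{Z}$ are exactly the reals $x$ with $cx\in\mathbb{Z}$, taken modulo $1$. This is a set of $|c|$ points. Hence $\mathsf{H}_{\bm{a}}(\mathrm{Fin})$ is finite.

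No step here is a real obstacle. The only point needing care is that $c\neq 0$: if $c$ were $0$, the constant subsequence would carry no information about $x$. This is exactly why the hypothesis that $\mathrm{supp}(\bm{a})$ is infinite is used in the pigeonhole step.
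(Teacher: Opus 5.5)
Your proof is correct and follows essentially the same route as the paper. In both, pigeonhole gives a nonzero value $c$ taken infinitely often, convergence along those indices forces $cx=0$, and $\{x\in\mathbb{T}: cx=0\}$ is finite (you additionally count its $|c|$ elements explicitly, which the paper leaves implicit).
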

\begin{proof}
    Set $k:=\sup_n|a_n|$. Then there exists an integer $m\neq 0$ such that $|m|\le k$ and $a_n=m$ for infinitely many $n \in \omega$. Observe that if $\lim_n \|a_nx\|=0$ then necessarily $\|mx\|=0$. Hence 
    $
    \mathsf{H}_{\bm{a}}(\mathrm{Fin})\subseteq 
    \{x \in \mathbb{T}: \|mx\|=0\},
    $ 
    which is a finite set.
\end{proof}

\medskip

Let us recall now a special case of Kronecker's simultaneous approximation theorem.
\begin{theorem}\label{thm:Acasselkronecker}
    Pick $\theta_1,\ldots,\theta_d,\alpha_1,\ldots,\alpha_d\in\mathbb R$ and suppose that, 
    for every $u_1,\ldots,u_d\in\mathbb Z$, we have $u_1\alpha_1+\cdots+u_d\alpha_d\in\mathbb Z$ whenever $u_1\theta_1+\cdots+u_d\theta_d\in\mathbb Z$. 
Then for every $\varepsilon>0$ there exists $N\in\mathbb Z$ such that $\|N\theta_j-\alpha_j\|<\varepsilon$ for every $j=1,\ldots,d$.
\end{theorem}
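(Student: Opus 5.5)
The plan is to read the statement as a fact about the closure of a cyclic subgroup of the torus $\mathbb{T}^d=\mathbb{R}^d/\mathbb{Z}^d$, and then to use that characters of a compact abelian group separate points. Write $\theta:=(\theta_1,\ldots,\theta_d)$ and $\alpha:=(\alpha_1,\ldots,\alpha_d)$, both viewed in $\mathbb{T}^d$. Let
$$
K:=\overline{\{N\theta: N\in\mathbb{Z}\}}\subseteq \mathbb{T}^d.
$$
The conclusion of Theorem \ref{thm:Acasselkronecker} says precisely that $\alpha\in K$, using the sup-metric $\max_j\|x_j-y_j\|$ on $\mathbb{T}^d$. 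Since the closure of a subgroup of a topological group is a subgroup, $K$ is a closed, hence compact, subgroup of $\mathbb{T}^d$.

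The first step identifies the characters vanishing on $K$. Every character of $\mathbb{T}^d$ has the form $\chi_u(x)=u_1x_1+\cdots+u_dx_d \pmod 1$ for some $u\in\mathbb{Z}^d$; this is the $d$-dimensional analogue of the description of characters of $\mathbb{T}$ recalled in the introduction. The character $\chi_u$ vanishes on $K$ if and only if it vanishes on $\theta$: one direction is trivial, and the other follows because $\chi_u$ is a continuous homomorphism, so it vanishes on the closed subgroup generated by $\theta$. Hence the annihilator of $K$ is $\{u\in\mathbb{Z}^d: u\cdot\theta\in\mathbb{Z}\}$. By hypothesis, every such $u$ also satisfies $u\cdot\alpha\in\mathbb{Z}$, that is, $\chi_u(\alpha)=0$.

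The second step argues by contradiction. Suppose $\alpha\notin K$. The quotient $G:=\mathbb{T}^d/K$ is a compact Hausdorff abelian group in which the image of $\alpha$ is nonzero. I would invoke the standard fact that the continuous characters of a compact abelian group separate points, which follows from Peter--Weyl and Schur's lemma, since irreducible unitary representations of an abelian group are one-dimensional. This gives a character $\psi: G\to\mathbb{T}$ with $\psi(\alpha+K)\neq 0$. Composing $\psi$ with the quotient map yields a character of $\mathbb{T}^d$ that vanishes on $K$ but not at $\alpha$. By the first step this character equals $\chi_u$ for some $u$ with $u\cdot\theta\in\mathbb{Z}$ and $u\cdot\alpha\notin\mathbb{Z}$, contradicting the hypothesis. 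Therefore $\alpha\in K$, and for every $\varepsilon>0$ there is $N\in\mathbb{Z}$ with $\|N\theta_j-\alpha_j\|<\varepsilon$ for all $j$.

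The main obstacle is the separation-of-points step. If a more elementary route is wanted, I would replace it with a Fourier argument. If $\alpha\notin K$, take a nonnegative continuous $f$ on $\mathbb{T}^d$ supported in a small ball around $\alpha$ that misses $K$, with $f(\alpha)>0$. Averaging over the Haar measure $\mu_K$ of $K$ gives $g(x):=\int_K f(x+k)\,d\mu_K(k)$. This function is $K$-invariant, satisfies $g(0)=0$ because $f$ vanishes on $K$, and satisfies $g(\alpha)>0$ because the translate $\alpha+K$ meets the support of $f$ at $\alpha$. The Fourier coefficients of $g$ are nonzero only at frequencies $u$ annihilating $K$, since $\hat g(u)=\hat f(u)\,\widehat{\mu_K}(u)$ and $\widehat{\mu_K}(u)=0$ unless $\chi_u\equiv 0$ on $K$. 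By the hypothesis, every such $u$ satisfies $e^{2\pi i u\cdot\alpha}=1$. Approximating $g$ uniformly by Fejér means, which involve only these frequencies, then forces $g(\alpha)=g(0)=0$, a contradiction.
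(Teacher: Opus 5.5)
Your argument is correct, and it differs from the paper because the paper gives no proof of its own here. It simply quotes Kronecker's theorem from Cassels' book (Chapter III, Theorem IV), specialized to $m=1$ integer variable.

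You recast the conclusion as $\alpha\in K:=\overline{\{N\theta:N\in\mathbb{Z}\}}\subseteq\mathbb{T}^d$. You then identify the annihilator of $K$ with $\{u\in\mathbb{Z}^d:u\cdot\theta\in\mathbb{Z}\}$, and finish with the fact that characters of the compact group $\mathbb{T}^d/K$ separate points. Your Fourier variant trades Peter--Weyl for Haar measure on $K$ plus Fej\'er's theorem. The one point worth stating there is that $g(\alpha)>0$ uses the fact that $\mu_K$ gives positive mass to every nonempty relatively open subset of $K$.

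What your route buys:
\begin{itemize}
\item It is self-contained modulo standard harmonic analysis.
\item It proves the result in exactly the form the paper later uses. In the proof of Theorem~\ref{thm:hardapproximation}, the set $C$ is your $K$ with $\theta=y$. Corollary~\ref{cor:kronecker} together with Claim~\ref{claim:closure} just say that $C$ contains every point annihilated by all $u$ with $u\cdot y=0$ in $\mathbb{T}$, which you prove directly.
\item The argument extends verbatim to closures of subgroups of any compact abelian group.
\end{itemize}

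What the citation buys is brevity, plus the more general simultaneous statement for several linear forms in several integer variables, which the paper does not need.
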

\begin{proof}
    See \cite[Chapter III, Theorem IV]{MR87708} in the case $m=1$. 
\end{proof}

Hereafter, given a subset $Y\subseteq \mathbb{T}$, we write $\langle Y\rangle$ for the smallest subgroup of $\mathbb{T}$ containing $Y$. Also, given (not necessarily distinct) $y_1,\ldots,y_k \in \mathbb{T}$, we write  $\langle y_1,\ldots,y_k\rangle:=\langle \{y_1,\ldots,y_k\}\rangle$. 

\begin{corollary}\label{cor:kronecker}
Pick $y_1,\ldots,y_d\in\mathbb T$ and 
let $\psi:\langle y_1,\ldots,y_d\rangle\to\mathbb T$ be a group homomorphism. Then, for every $\varepsilon>0$, there exists $N\in\mathbb Z$ such that $\|Ny_j-\psi(y_j)\|<\varepsilon$ for every $j=1,\ldots,d$.
\end{corollary}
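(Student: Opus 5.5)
We derive Corollary~\ref{cor:kronecker} from Theorem~\ref{thm:Acasselkronecker} by choosing real lifts. For each $j=1,\ldots,d$, pick real representatives $\theta_j\in\mathbb R$ of $y_j$ and $\alpha_j\in\mathbb R$ of $\psi(y_j)$. The corollary then follows once we check the hypothesis of Theorem~\ref{thm:Acasselkronecker} for these reals. Indeed, the conclusion $\|N\theta_j-\alpha_j\|<\varepsilon$ is exactly $\|Ny_j-\psi(y_j)\|<\varepsilon$ in $\mathbb T$, because $\|\cdot\|$ depends only on the class modulo $\mathbb Z$.

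To verify the hypothesis, let $u_1,\ldots,u_d\in\mathbb Z$ satisfy $u_1\theta_1+\cdots+u_d\theta_d\in\mathbb Z$. Read in $\mathbb T$, this says $u_1y_1+\cdots+u_dy_d=0$ in $\langle y_1,\ldots,y_d\rangle$. Since $\psi$ is a group homomorphism on this subgroup, we get
\[
u_1\psi(y_1)+\cdots+u_d\psi(y_d)=\psi(u_1y_1+\cdots+u_dy_d)=\psi(0)=0 \quad\text{in } \mathbb T .
\]
Lifting back to $\mathbb R$, this means $u_1\alpha_1+\cdots+u_d\alpha_d\in\mathbb Z$, which is precisely the required implication.

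Theorem~\ref{thm:Acasselkronecker} then yields, for each $\varepsilon>0$, an $N\in\mathbb Z$ with the desired simultaneous approximation. No step here is a genuine obstacle. The only point needing care is that the integer relations among the real lifts $\theta_j$ correspond exactly to the group relations among the $y_j$ in $\mathbb T$; this is immediate from $\mathbb T=\mathbb R/\mathbb Z$ and makes the argument independent of the choice of lifts.
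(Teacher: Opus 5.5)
Your proof is correct and follows the paper's argument exactly. You lift $y_j$ and $\psi(y_j)$ to reals, use the homomorphism property of $\psi$ to transfer integer relations from the $\theta_j$ to the $\alpha_j$, and then apply Theorem~\ref{thm:Acasselkronecker}.
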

\begin{proof}
    Pick reals $\theta_1,\ldots,\theta_d,\alpha_1,\ldots,\alpha_d\in\mathbb R$ such that $y_j=\theta_j+\mathbb Z$ and $\psi(y_j)=\alpha_j+\mathbb Z$ for every $j=1,\ldots,d$.
    Now, suppose that $u_1,\ldots,u_d\in\mathbb Z$ satisfies  $u_1\theta_1+\cdots+u_d\theta_d\in\mathbb Z$, thus $u_1y_1+\cdots+u_dy_d=0$ in $\mathbb T$. Since $\psi$ is a homomorphism, it follows that
\[
u_1\psi(y_1)+\cdots+u_d\psi(y_d)=\psi(u_1y_1+\cdots+u_dy_d)=\psi(0)=0.
\]
Equivalently, $u_1\alpha_1+\cdots+u_d\alpha_d\in\mathbb Z$. 
Thanks to Theorem \ref{thm:Acasselkronecker}, for every $\varepsilon>0$ there exists $N\in\mathbb Z$ such that $\|N\theta_j-\alpha_j\|<\varepsilon$ for every $j=1,\ldots,d$. In turn, this is equivalent to $\|Ny_j-\psi(y_j)\|<\varepsilon$ for every $j=1,\ldots,d$.
\end{proof}

Next, let us recall that a set of integers $A\subseteq \mathbb{Z}$ is said to be \emph{thick} if it contains arbitrarily large intervals of integers (hence, $\bm{a}$ is thick if $\mathrm{ran}(\bm{a})$ is thick). An alternative proof of the following result in the case $A=\mathbb{Z}$ can be obtained from \cite[Theorem 2.1]{MR2227021}.  
\begin{theorem}\label{thm:hardapproximation}
Pick $h_1,h_2,\ldots,h_k,x \in \mathbb{T}$ with $x\notin \langle h_1,\ldots,h_k\rangle$. Let $A\subseteq\mathbb Z$ be a thick set of integers. Then, for every $\varepsilon>0$, there exists $n\in A$ such that
$$
\|nh_i\|<\varepsilon 
\quad\text{for every }i=1,\ldots,k
\quad \text{ and }\quad 
\|nx\|>\nicefrac14.
$$
\end{theorem}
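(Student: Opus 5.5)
The plan is to combine the simultaneous approximation of Corollary \ref{cor:kronecker} with the thickness of $A$: first find a single integer $N$ that makes every $Nh_i$ tiny while pushing $Nx$ far from $0$, and then use a long interval inside $A$ to shift $N$ into $A$ without destroying these properties.

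Write $G:=\langle h_1,\ldots,h_k\rangle$ and $G':=\langle h_1,\ldots,h_k,x\rangle$. Since $x\notin G$, the quotient $G'/G$ is a nontrivial cyclic group generated by the class of $x$. Hence there is a homomorphism $\psi_0: G'/G\to\mathbb{T}$ with $\|\psi_0(x+G)\|\ge 1/3$, chosen by cases.
\begin{itemize}
\item If $G'/G$ is infinite cyclic, send the generator to $1/2$.
\item If $G'/G$ has finite order $m\ge 2$, send the generator to $\lfloor m/2\rfloor/m$, whose norm is at least $1/3$ (the worst case is $m=3$).
\end{itemize}
Composing $\psi_0$ with the quotient map gives a homomorphism $\psi: G'\to\mathbb{T}$ with $\psi(h_i)=0$ for all $i$ and $\|\psi(x)\|\ge 1/3$.

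Apply Corollary \ref{cor:kronecker} to the generators $h_1,\ldots,h_k,x$ with a small tolerance $\delta$. This yields $N\in\mathbb{Z}$ with $\|Nh_i\|<\delta$ for all $i$ and $\|Nx\|>1/3-\delta$.

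The main obstacle is that $N$ need not lie in $A$. To handle this, I will make the set $P$ of all $N$ with these properties \emph{syndetic}, meaning it has bounded gaps, and then use that $A$ contains intervals longer than those gaps.

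To show $P$ is syndetic, I will use the Bohr-set argument. Let
$$
B:=\{m\in\mathbb{Z}:\ \|mh_i\|<\delta\text{ for all }i,\ \|mx\|<\delta\}.
$$
Fix $N$ as above. For every $m\in B$, the integer $N+m$ satisfies $\|(N+m)h_i\|<2\delta$ and $\|(N+m)x\|>1/3-2\delta$. Bohr sets are syndetic: cover $\mathbb{T}^{k+1}$ by finitely many boxes of side $\delta/2$, and for each box met by the orbit $\{m(h_1,\ldots,h_k,x): m\in\mathbb{Z}\}$ pick one integer $m_j$ whose image lies in it. Then every $m\in\mathbb{Z}$ lies within $\max_j|m_j|$ of some element of $B$, since $m-m_j\in B$ for the appropriate $j$. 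Consequently $N+B$ has gaps bounded by some $L$.

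Now choose an interval $I\subseteq A$ of length greater than $2L$. It contains some $n\in N+B$, and this $n$ satisfies $\|nh_i\|<2\delta$ and $\|nx\|>1/3-2\delta$. Taking $\delta<\min(\varepsilon/2,\,1/24)$ gives $\|nh_i\|<\varepsilon$ for all $i$ and $\|nx\|>1/4$, as required.
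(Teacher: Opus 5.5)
Your proof is correct and follows the paper's argument essentially step for step. The common steps are:
\begin{itemize}
\item the same homomorphism on $\langle h_1,\ldots,h_k,x\rangle$ that kills $\langle h_1,\ldots,h_k\rangle$ and has $\|\psi(x)\|\ge 1/3$;
\item Corollary~\ref{cor:kronecker} to produce one good integer $N$;
\item syndeticity of the Bohr set $\{m:\|m h_i\|<\delta,\ \|mx\|<\delta\}$, whose translate by $N$ consists of good integers;
\item thickness of $A$.
\end{itemize}
The differences are only cosmetic. You cover all of $\mathbb{T}^{k+1}$ by small boxes, where the paper covers the orbit closure $C$ by translates of a ball. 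You also track explicit $\delta$-tolerances instead of choosing a ball $B(a,2\rho)\subseteq U$.
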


\begin{proof}
Put $H:=\langle h_1,\ldots,h_k\rangle$ and $L:=\langle h_1,\ldots,h_k,x\rangle$. Every element of $L$ has the form $h+sx$, with $h\in H$ and $s\in\mathbb Z$. Define 
$$
R:=\{s\in\mathbb Z:sx\in H\}.
$$
Note that $R$ is a subgroup of $\mathbb Z$. In addition, since $x\notin H$, we have $1\notin R$. Therefore either $R=\{0\}$, or $R=r\mathbb Z$ for some integer $r\geq 2$.

\begin{claim}\label{claim:homom}
    There exists an homomorphism $\varphi:L\to\mathbb T$ such that 
\begin{equation}\label{eq:claimvarphi}
\varphi[H]=\{0\}
\quad \text{ and }\quad
\|\varphi(x)\|\geq \nicefrac{1}{3}.
\end{equation}
\end{claim}

\begin{proof}
First, suppose that $R=\{0\}$. If $h+sx=h^\prime+s^\prime x$, with $h,h^\prime\in H$ and $s,s^\prime\in\mathbb Z$, then $(s-s^\prime)x=h^\prime -h\in H$, hence $s-s^\prime \in R$, and so $s=s^\prime$. Thus the integer $s$ is uniquely determined by the element $h+sx$. Define $\varphi(h+sx):=s/3+\mathbb Z$. Then it is routine to observe that $\varphi$ is a well-defined homomorphism $L\to \mathbb{T}$ which satisfies \eqref{eq:claimvarphi}.

Now suppose that $R=r\mathbb Z$ for some $r\geq 2$. Set $j:=\lfloor \nicefrac{r}{2}\rfloor$, so that $\|\nicefrac{j}{r}+\mathbb Z\|\geq \nicefrac{1}{3}$. 
At this point, define $\varphi(h+sx):=\nicefrac{sj}{r}+\mathbb Z$. This is well-defined. In fact, if $h+sx=h^\prime+s^\prime x$, then $(s-s^\prime)x=h^\prime-h\in H$, so $s-s^\prime\in R=r\mathbb Z$. Thus $s-s^\prime=rq$ for some $q\in\mathbb Z$, and consequently 
$\nicefrac{sj}{r}-\nicefrac{s^\prime j}{r}=qj\in\mathbb Z$. 
Hence $\nicefrac{sj}{r}+\mathbb Z=\nicefrac{s^\prime j}{r}+\mathbb Z$. As in the previous case, it follows that $\varphi$ is a well-defined homomorphism $L\to \mathbb{T}$ which satisfies \eqref{eq:claimvarphi}. 
\end{proof}

Now, let $\varphi: L\to \mathbb{T}$ be as in Claim \ref{claim:homom}, and define $y:=(h_1,\ldots,h_k,x)\in\mathbb T^{k+1}$ and 
$$
a:=(\varphi(h_1),\ldots,\varphi(h_k),\varphi(x))=(0,\ldots,0,\varphi(x))\in\mathbb T^{k+1}.
$$
In addition, consider the compact set 
$$
C:=\overline{\{ny:n\in\mathbb Z\}}\subseteq\mathbb T^{k+1},
$$
where $ny:=(nh_1,\ldots,nh_k,nx)$. 
\begin{claim}\label{claim:closure}
$a\in C$.  
\end{claim}
\begin{proof}
It follows from Corollary \ref{cor:kronecker} applied to $y=(h_1,\ldots,h_k,x)$ and $\psi=\varphi$. 
\end{proof}

Fix $\varepsilon>0$ and define
$$
U:=\{z\in\mathbb T^{k+1}:\|z_i\|<\varepsilon\text{ for all }i=1,\ldots,k,\text{ and }\|z_{k+1}\|>\nicefrac{1}{4}\}.
$$
Observe that $a \in U$. Now, define $B:=\{n\in\mathbb Z: ny\in U\}$, so that $n\in B$ if and only if $\|nh_i\|<\varepsilon$ for every $i=1,\ldots,k$ and $\|nx\|>\nicefrac{1}{4}$. To complete the proof, it will be enough to show that 
$$
A\cap B\neq\emptyset.
$$

\begin{claim}\label{claim:Bsyndetic}
    There is a finite set $F\subseteq\mathbb Z$ such that $\mathbb Z=B+F$ (that is, $B$ is syndetic).
\end{claim}
\begin{proof}
On $\mathbb T^{k+1}$ let us consider the supremum metric $d(z,z^\prime):=\max_i \|z_i-z_i^\prime\|$, and denote the open balls with center $z$ and radius $r>0$ by $B(z,r)$. 
Since $U$ is open and $a\in U$, it is possible to choose $\rho>0$ such that $B(a,2\rho)\subseteq U$. Since $a\in C$ by Claim \ref{claim:closure}, there is $p\in\mathbb Z$ such that $d(py,a)<\rho$. 

Now, let $V:=B(0,\rho)$ and $S:=\{r\in\mathbb Z:ry\in V\}$. 
Since $\{ny:n\in\mathbb Z\}$ is dense in $C$, the family $\{ny+V:n\in\mathbb Z\}$ is an open cover of $C$. By compactness, there are $q_1,\ldots,q_\ell\in\mathbb Z$ such that 
$
C\subseteq (q_1y+V)\cup\cdots\cup(q_\ell y+V).
$ 
Fix any $q\in\mathbb Z$. Since $qy\in C$, there is $j\in\{1,\ldots,\ell\}$ such that $qy\in q_jy+V$. Hence $(q-q_j)y\in V$, so $q-q_j\in S$, and therefore $q\in S+q_j$. This proves that $S$ is syndetic and, more precisely, 
$$
\mathbb Z=S+\{q_1,\ldots,q_\ell\}.
$$

If $s\in S$, then $d(sy,0)<\rho$. Since $d(py,a)<\rho$, we get $d((p+s)y,a)<2\rho$, and hence $(p+s)y\in U$. Therefore $p+s\in B$, and so $p+S\subseteq B$. Since $S$ is syndetic, then $B$ is syndetic as well. 
\end{proof}

To conclude, pick a finite set $F\subseteq\mathbb Z$ such that $\mathbb Z=B+F$ as in Claim \ref{claim:Bsyndetic}. 
Write $f_{\min}:=\min F$ and $f_{\max}:=\max F$. Since $A$ is thick, we can choose $m\in\mathbb Z$ such that $\{m,m+1,\ldots,m+f_{\max}-f_{\min}\}\subseteq A$. Since $m+f_{\max}\in\mathbb Z=B+F$, there are $b\in B$ and $f\in F$ such that $m+f_{\max}=b+f$, hence $b=m+f_{\max}-f$. Since $f_{\min}\leq f\leq f_{\max}$, we have $m\leq b\leq m+f_{\max}-f_{\min}$. It follows that $b\in A$. Therefore $b \in A\cap B$, which completes the proof. 
\end{proof}

\medskip

\begin{lemma}\label{lem:piBorel}
Let $\bm{a}$ be an integer sequence and define
$$
\pi:2^\omega\to 2^Z,\qquad \pi(S):=\{a_n:n\in S\}, \quad \text{where }
Z:=\operatorname{ran}(\bm a).
$$
Then $\pi$ is Borel. Moreover, if $C\subseteq 2^Z$ is hereditary compact,
then $\pi^{-1}[C]$ is compact. 
\end{lemma}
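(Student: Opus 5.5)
We prove the two claims separately, identifying $2^\omega$ with $\mathcal{P}(\omega)$ and $2^Z$ with $\mathcal{P}(Z)$, both carrying the product (Cantor) topology.

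\emph{Borelness.} It suffices to show that $\pi^{-1}$ of every subbasic open set is Borel. The subbasic sets in $2^Z$ have the form $\{T\subseteq Z: z\in T\}$ or $\{T\subseteq Z: z\notin T\}$ for some $z\in Z$. For a fixed $z\in Z$, put $F_z:=\{n\in\omega: a_n=z\}$. Then
$$
\pi^{-1}[\{T: z\in T\}]=\{S\subseteq\omega: S\cap F_z\neq\emptyset\}=\bigcup_{n\in F_z}\{S: n\in S\}.
$$
This is a countable union of clopen sets, hence open. Its complement $\{S: S\cap F_z=\emptyset\}=\bigcap_{n\in F_z}\{S:n\notin S\}$ is closed. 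So $\pi$ is Borel; in fact it is Baire class one. It need not be continuous when some $F_z$ is infinite.

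\emph{Compactness of $\pi^{-1}[C]$.} Since $2^\omega$ is compact, it is enough to show that $\pi^{-1}[C]$ is closed. Take $S\notin\pi^{-1}[C]$, so $\pi(S)\notin C$. Because $C$ is closed, there is a finite set $G\subseteq Z$ such that every $T\subseteq Z$ with $T\cap G=\pi(S)\cap G$ lies outside $C$.

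Let $G':=\pi(S)\cap G$. We claim that every $T\supseteq G'$ lies outside $C$. Indeed, if $T\supseteq G'$ belonged to $C$, heredity would give $G'\in C$. But $G'\cap G=\pi(S)\cap G$, so $G'\notin C$ by the choice of $G$, a contradiction.

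For each $z\in G'$ choose $n_z\in S$ with $a_{n_z}=z$, and consider the basic open set
$$
W:=\{S'\subseteq\omega: n_z\in S' \text{ for all } z\in G'\}.
$$
It contains $S$, and every $S'\in W$ satisfies $\pi(S')\supseteq G'$. By the previous paragraph, $\pi(S')\notin C$. Hence $W\cap\pi^{-1}[C]=\emptyset$, so the complement of $\pi^{-1}[C]$ is open and $\pi^{-1}[C]$ is closed, hence compact.

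The main obstacle is the discontinuity of $\pi$. We cannot take preimages of closed sets directly, because $\pi$ is continuous only in the ``upward'' sense: membership $z\in\pi(S)$ is witnessed by a single coordinate, while non-membership requires infinitely many coordinates. Heredity of $C$ is exactly what allows us to witness $\pi(S)\notin C$ using only positive information, namely finitely many elements of $\pi(S)$, and this is the step that makes the argument work.
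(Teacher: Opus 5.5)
Your proof is correct and follows essentially the same route as the paper: Borelness comes from the preimages of the subbasic sets being open or closed. Closedness of $\pi^{-1}[C]$ comes from finding a finite $F\subseteq\pi(S)$ with $F\notin C$ and then using heredity on the basic neighbourhood $\{T: n_z\in T \text{ for all } z\in F\}$. The only cosmetic difference is how the finite witness is obtained: you read it off as $\pi(S)\cap G$ from a basic neighbourhood of $\pi(S)$ disjoint from $C$, whereas the paper argues that otherwise every finite subset of $\pi(S)$ would lie in $C$, and hence so would $\pi(S)$.
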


\begin{proof}
First we show that \(\pi\) is Borel. For \(z\in Z\), we have
$$
\pi^{-1}[\{A\subseteq Z:z\in A\}]
=
\{S\subseteq\omega:\exists_{n\in S}\ a_n=z\}.
$$
This set is open in \(2^\omega\).  
Similarly, $\pi^{-1}[\{A\subseteq Z:z\notin A\}]$ is closed in $2^\omega$. 
Hence the preimage under $\pi$ of each
basic clopen subset of $2^Z$ is Borel. Therefore $\pi$ is Borel.

Now, let $C\subseteq 2^Z$ be hereditary compact. We show that $\pi^{-1}[C]$ is closed. Let $S\notin\pi^{-1}[C]$, so $\pi(S)\notin C$. 
Since \(C\) is closed, there is a finite set
$F\subseteq\pi(S)$ such that $F\notin C$. 
Indeed, otherwise every finite subset of $\pi(S)$ would belong to \(C\), and
by compactness we would get $\pi(S)\in C$. For each \(z\in F\), choose \(n_z\in S\) such that \(a_{n_z}=z\). Then the
basic open neighbourhood 
$U:=\{T\subseteq\omega:n_z\in T\text{ for every }z\in F\}$ 
of \(S\) satisfies $\pi(T)\supseteq F$ for every \(T\in U\). 
Since \(C\) is hereditary and \(F\notin C\), no such
\(\pi(T)\) can belong to \(C\). Hence 
$U\cap \pi^{-1}[C]=\emptyset$.  
Thus \(\pi^{-1}[C]\) is closed, hence compact in $2^\omega$. 
\end{proof}

\medskip

\begin{lemma}\label{lem:charactePideal}
Let \(X\) and \(Y\) be countably infinite sets, let \(f:X\to Y\) be a
finite-to-one surjection, and let \(\mathcal I\) be an ideal on \(Y\).
Define an ideal \(\mathcal J\) on \(X\) by
\[
\mathcal J:=\{A\subseteq X:f[A]\in\mathcal I\}.
\]
Then \(\mathcal I\) is a \(P\)-ideal if and only if \(\mathcal J\) is a
\(P\)-ideal.
\end{lemma}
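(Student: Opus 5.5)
We prove the two implications separately. Throughout we use that $f$ is a finite-to-one surjection, so that $f[f^{-1}[B]]=B$ for every $B\subseteq Y$, $A\subseteq f^{-1}[f[A]]$ for every $A\subseteq X$, and $f^{-1}[F]$ is finite whenever $F\subseteq Y$ is finite. In particular, $B\in\mathcal I$ if and only if $f^{-1}[B]\in\mathcal J$. We also note that $\mathcal J$ is a proper ideal containing $\Fin$: images of finite sets are finite, and $f[X]=Y\notin\mathcal I$.

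\emph{($\mathcal I$ is $P$ $\Rightarrow$ $\mathcal J$ is $P$.)} Let $A_n\in\mathcal J$ for $n\in\omega$. Then $B_n:=f[A_n]\in\mathcal I$, so there is $B\in\mathcal I$ with $B_n\setminus B$ finite for all $n$. Put $A:=f^{-1}[B]$; then $A\in\mathcal J$ since $f[A]=B$. Moreover
\[
A_n\setminus A\subseteq f^{-1}[f[A_n]]\setminus f^{-1}[B]=f^{-1}[B_n\setminus B],
\]
and the right-hand side is finite because $f$ is finite-to-one.

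\emph{($\mathcal J$ is $P$ $\Rightarrow$ $\mathcal I$ is $P$.)} Let $B_n\in\mathcal I$ for $n\in\omega$. Then $A_n:=f^{-1}[B_n]\in\mathcal J$, so there is $A\in\mathcal J$ with $A_n\setminus A$ finite for all $n$. Put $B:=f[A]\in\mathcal I$. If $y\in B_n\setminus B$, then, by surjectivity, $y=f(x)$ for some $x\in A_n$, and necessarily $x\notin A$ (otherwise $y\in f[A]=B$). Hence
\[
B_n\setminus B\subseteq f[A_n\setminus A],
\]
which is finite.

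There is no real obstacle in this argument. The only point needing care is where each hypothesis enters: finite-to-one is used for the direction $\mathcal I\Rightarrow\mathcal J$, and surjectivity is used for the converse and to guarantee that $\mathcal J$ is proper.
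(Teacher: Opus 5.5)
Your proof is correct and follows the paper's argument essentially step for step: you take $A:=f^{-1}[B]$ in the forward direction and $B:=f[A]$ in the converse, exactly as the paper does. The inclusion $B_n\setminus B\subseteq f[A_n\setminus A]$ is just a more compact way of writing the paper's argument, which picks one preimage of each $y\in B_n\setminus B$ and so injects $B_n\setminus B$ into the finite set $A_n\setminus A$.
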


\begin{proof}
First, suppose that $\mathcal I$ is a $P$-ideal. Let
$(A_n:n\in\omega)$ be a sequence of sets in $\mathcal J$. Then $f[A_n]\in\mathcal I$ for every $n\in\omega$. Since $\mathcal I$ is a $P$-ideal, there is
$B\in\mathcal I$ such that $f[A_n]\setminus B$ is finite for every $n\in\omega$. 
Put $A:=f^{-1}[B]$. Then $f[A]=B\in\mathcal I$, so $A\in\mathcal J$. 
Moreover, for every $n\in\omega$, 
$$
A_n\setminus A
\subseteq f^{-1}[f[A_n]\setminus B].
$$
Since $f[A_n]\setminus B$ is finite and $f$ is finite-to-one, the set $f^{-1}[f[A_n]\setminus B]$ is finite. 
Therefore $A_n\setminus A$ is finite for every $n\in\omega$.
Thus $\mathcal J$ is a $P$-ideal.

Conversely, suppose that $\mathcal J$ is a $P$-ideal. Let
$(B_n:n\in\omega)$ be a sequence of sets in $\mathcal I$. For each $n\in\omega$, put $A_n:=f^{-1}[B_n]$.  
Then $f[A_n]=B_n\in\mathcal I$, 
so $A_n\in\mathcal J$. Since $\mathcal J$ is a $P$-ideal, there is $A\in\mathcal J$ such that $A_n\setminus A$ is finite for every $n\in\omega$. 
Put $B:=f[A]$. 
Then $B\in\mathcal I$, because $A\in\mathcal J$. We claim that $B_n\setminus B$ is finite for every $n\in\omega$. To this aim, fix $n\in\omega$. 
If $y\in B_n\setminus B$, 
then, since $f$ is onto, it is possible to choose $x\in X$ such that $f(x)=y$. 
Since $y\in B_n$, every such $x$ belongs to $f^{-1}[B_n]=A_n$. 
Since $y\notin B=f[A]$, no such $x$ belongs to $A$. Hence
$$
f^{-1}[\{y\}]\subseteq A_n\setminus A.
$$
In particular, choosing one point from each nonempty 
$f^{-1}[\{y\}]$, we get an injection from $B_n\setminus B$ into the finite set $A_n\setminus A$. Therefore $B_n\setminus B$ is finite. Hence $\mathcal I$ is a $P$-ideal. 
\end{proof}

\medskip

\begin{lemma}\label{lem:P^-}
Let $\I$ be an ideal on $\omega$ which satisfies \eqref{eq:technicalP-EDfin}, as in Theorem \ref{thm:1P^-}. 
Then for every $\bm{a}\in\mathbb{Z}^\omega$ such that $\I\text{-}\lim_{n}|a_n|=\infty$ there is a sequence $(k(n))_{n\in\omega} \in \omega^\omega$ such that 
$$
\{k(n):n\in\omega\}\in\I^+
\quad \text{ and }\quad 
\lim_{n\to \infty}\frac{a_{k(n+1)}}{a_{k(n)}}=\infty.
$$
\end{lemma}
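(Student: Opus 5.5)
The plan is to use the hypothesis \eqref{eq:technicalP-EDfin} in its selector form, recalled before Theorem \ref{thm:1P^-}. For an ideal $\mathcal{K}$ on a countable set $X$, $\mathcal{ED}\le_{\mathrm K}\mathcal{K}$ holds if and only if $X$ admits a partition into sets of $\mathcal{K}$ all of whose selectors lie in $\mathcal{K}$. Hence \eqref{eq:technicalP-EDfin} gives the following: for every $A\in\I^+$ and every partition of $A$ into sets from $\I$, some selector (partial selectors suffice, being subsets of full ones) is $\I$-positive.

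Next I build the partition. Fix $\bm a$ with $\I\text{-}\lim_n|a_n|=\infty$. Then $A:=\{n:|a_n|\ge 1\}$ is co-$\I$, in particular $A\in\I^+$.
\begin{itemize}
\item Partition $A$ into blocks $P_j:=\{n\in A: 4^{j^2}\le |a_n|<4^{(j+1)^2}\}$, $j\in\omega$.
\item Each $P_j$ lies in $\{n:|a_n|<4^{(j+1)^2}\}\in\I$.
\item Blocks that are empty are simply discarded.
\end{itemize}
Take a selector $T$ which is $\I$-positive, meeting each block in at most one point.

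The difficulty is that consecutive chosen blocks might be adjacent, so that the ratio is not large. To fix this, I split $T$ by the parity of the block index: $T=T_0\cup T_1$, where $T_e$ consists of the points selected from blocks $P_j$ with $j\equiv e \pmod 2$. One of $T_0,T_1$ is $\I$-positive, say $T_e$. Enumerate $T_e$ increasingly with respect to the block index, and let $k(n)$ be the point chosen from the $n$-th block used.

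Then consecutive terms come from blocks $P_j$ and $P_{j'}$ with $j'\ge j+2$. This gives
$$
\frac{|a_{k(n+1)}|}{|a_{k(n)}|}\ \ge\ \frac{4^{(j+2)^2}}{4^{(j+1)^2}}\ =\ 4^{2j+3}\ \to\infty.
$$
Also $\{k(n):n\in\omega\}=T_e\in\I^+$. If $T_e$ is finite it would lie in $\I$, so it is infinite and the sequence is well defined.

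The statement writes $a_{k(n+1)}/a_{k(n)}$ without absolute values. To get the literal ratio to tend to $+\infty$, I first split $A$ into $\{a_n>0\}$ and $\{a_n<0\}$; one of them is $\I$-positive, and I work inside it using \eqref{eq:technicalP-EDfin} for $\I\restriction$ that set. Then all $a_{k(n)}$ have the same sign and the ratio is positive.

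The main obstacle is justifying the selector formulation. This means making sure that the cited equivalence applies to $\I\restriction A$ on the set $A$, and that a partition containing empty or finite pieces does not cause trouble. Finite pieces are harmless because $\I\supseteq\mathrm{Fin}$. If needed, I would instead verify directly that a partition all of whose selectors are in $\I\restriction A$ yields a Katětov map onto $\mathcal{ED}$: send the $j$-th block injectively into $\{j\}\times\omega$, or enlarge the blocks so that they are infinite. This gives $\mathcal{ED}\le_{\mathrm K}\I\restriction A$, contradicting \eqref{eq:technicalP-EDfin}.
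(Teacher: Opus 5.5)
Your proof is correct. It uses the same basic mechanism as the paper: take an $\I$-positive partial selector of a partition of a sign-homogeneous $\I$-positive set into magnitude blocks lying in $\I$, then keep only the blocks of one parity so that consecutive chosen terms are separated by a whole block. Your version is more economical, however.

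The paper invokes \eqref{eq:technicalP-EDfin} twice:
\begin{enumerate}
\item first on the level sets $\{n\in E_1:a_n=k\}$, to obtain an $\I$-positive $E_2$ on which $\bm a$ is injective;
\item then on a partition of $E_2$ into \emph{finite} blocks, whose cut points $b_n$ are chosen inductively so that $a_{m(b_{n+1})}/a_{m(b_n)}>n+1$.
\end{enumerate}
You skip the injectivity step entirely and use the hypothesis once. The windows $[4^{j^2},4^{(j+1)^2})$ belong to $\I$ directly because $\I\text{-}\lim_n|a_n|=\infty$; they need not be finite. Their super-exponential growth replaces the inductive choice of the $b_n$ and yields the explicit bound $4^{2j+3}$.

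Your direct verification of the selector form is also worthwhile, since the paper uses the characterization as a black box. You map the $j$-th block injectively into $\{j\}\times\omega$ and observe that the preimage of an $\mathcal{ED}$ set is covered by finitely many blocks plus finitely many partial selectors. This shows that if all partial selectors were in $\I$, then $\mathcal{ED}\le_{\mathrm K}\I\restriction A$. It also makes clear that empty or infinite pieces cause no trouble.

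The paper's injectivity step is not needed for this lemma. It mirrors the structure of the proof of Theorem \ref{thm:2P^-}, where finite-to-oneness of $n\mapsto\nu_2(a_n)$ on the selected set is genuinely used.
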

\begin{proof}
Fix $\bm a\in\mathbb Z^\omega$ such that $\mathcal I\text{-}\lim_n |a_n|=\infty$. Taking into account that  $\mathrm{supp}(\bm{a})\in\mathcal I^+$ and replacing $\bm a$ by $-\bm a$, if necessary, 
we may assume that 
$$
E_1:=\{n\in\omega:a_n>0\}\in\mathcal I^+.
$$
For every $k\in\omega$, we have $\{n\in E_1:a_n<k\}\in\mathcal I$. Then $(\{n\in E_1:a_n=k\}: k\in\omega)$ is a partition of $E_1$ into sets belonging to $\I\restriction E_1$. Since $\mathcal{ED}\not\leq_{\mathrm{K}}\I\restriction E_1$, there is $E_2\subseteq E_1$ such that $E_2\in\mathcal I^+$ and $|\{n\in E_2:a_n=k\}|=1$ for all $k\in\omega$. Let $(m(n):n\in\omega)$ be an enumeration of $E_2$ such that $a_{m(n)}\leq a_{m(n+1)}$ for all $n\in\omega$.

Inductively choose a strictly increasing sequence $(b_n:n\in\omega)$ in $\omega$ such
that 
$$
b_0=0
\qquad \text{ and }\qquad 
\frac{a_{m(b_{n+1})}}{a_{m(b_n)}}>n+1 \,\,\text{ for all }n \in \omega. 
$$
Define $F_0:=\{m(b_0)\}$ and $F_{n+1}:=\{m(i):b_n<i\leq b_{n+1}\}$ for every $n\in\omega$. Then $(F_n:n\in\omega)$ is a partition of $E_2$ into finite sets, and $\bigcup_n F_n=E_2\in\mathcal I^+$. Since $\mathcal{ED}\not\leq_{\mathrm{K}} \mathcal I\restriction E_2$, there is $E_3\in\mathcal I^+$ such that
$$
|E_3\cap F_n|=1 
\,\,\,\text{ for all }n \in \omega. 
$$

Now either 
$E_3\cap\bigcup_{n}F_{2n}\in\mathcal I^+$ or $E_3\cap\bigcup_{n}F_{2n+1}\in\mathcal I^+$. 
Without loss of generality, assume that
$$
E_4:=E_3\cap\bigcup_{n\in\omega}F_{2n}\in\mathcal I^+.
$$
For every $n$, choose the unique $e(n)\in\omega$ such that $E_4\cap F_{2n}=\{m(e(n))\}$, and put $k(n):=m(e(n))$. 
Then $\{k(n):n\in\omega\}=E_4\in\mathcal I^+$. 
Finally, since $e(n)\leq b_{2n}$ and $e(n+1)>b_{2n+1}$, the monotonicity of
$(a_{m(i)}:i\in\omega)$ gives
$$
\frac{a_{k(n+1)}}{a_{k(n)}}
=
\frac{a_{m(e(n+1))}}{a_{m(e(n))}}
\geq
\frac{a_{m(b_{2n+1})}}{a_{m(b_{2n})}}
>
2n+1.
$$
We conclude that $\lim_n a_{k(n+1)}/a_{k(n)}=\infty$. 
\end{proof}

\section{Proofs of Main results}\label{sec:proofs}

\begin{proof}
[Proof of Theorem \ref{thm:Pproperty}] 
\ref{item:01Pidealchar} $\implies$ \ref{item:02Pidealchar}. 
It follows from the folklore fact that, if $\mathcal{I}$ is a $P$-ideal then $\mathcal{I}$-convergence and $\mathcal{I}^\star$-convergence coincide \cite[Prop.~3.2 and Thm.~3.2]{MR1844385}.

\medskip

\ref{item:02Pidealchar} $\implies$ \ref{item:03Pidealchar}. This is clear. 

\medskip

\ref{item:03Pidealchar} $\implies$ \ref{item:01Pidealchar}. 
Assume that $\mathcal I$ is not a $P$-ideal and let $\bm a \in (\omega\setminus \{0\})^\omega$ be a positive sequence such that $\lim_n a_n/a_{n+1}=0$. It will be enough to show that $\mathsf H_{\bm a}(\mathcal I)\neq \mathsf H^\star_{\bm a}(\mathcal I)$. 

Since $\mathcal I$ is not a $P$-ideal, there is a sequence  $(B_k:k\in\omega)$ of sets in $\mathcal I$ such that 
\begin{equation}\label{eq:contradicitonPideal}
\forall_{B \in \mathcal{I}}\, \exists_{k \in \omega} \qquad |B_k\setminus B|=\omega.
\end{equation}
Replacing, if necessary, each $B_k$ with $B_k\setminus \bigcup_{t \in k}B_t$, it can be assumed without loss of generality that the sets $B_k$ are pairwise disjoint. Observe that $B_\infty:=\bigcup_j B_j$ does not belong to $\mathcal{I}$. 
Now, pick a sequence $(\delta_k: k\in \omega)$ in $\mathbb{T}$ such that $\delta_k\neq 0$ for all $k \in \omega$ and $\lim_k \delta_k=0$. Then, define the sequence $(t_n:n\in\omega)$ by
\begin{displaymath}
t_n:=
    \begin{cases}
        \,\delta_k & \text{ if } n\in B_k \text{ for some }k \in \omega,\\
        \,0      & \text{ if } n\notin B_\infty,
    \end{cases}
\end{displaymath}
for all $n \in \omega$. 
It follows from construction that $\mathcal{I}\text{-}\lim_n t_n=0$. On the other hand, taking into account \eqref{eq:contradicitonPideal}, the sequence $(t_n: n \in \omega)$ is not $\mathcal{I}^\star$-convergent. 
To conclude, thanks to Proposition \ref{prop:interpolation}, there exists $x\in\mathbb T$ such that $\lim_n (a_nx-t_n)=0$. It follows from \cite[Lemma 3.3 and Lemma 3.5(i)]{MR3920799} that $\mathcal{I}\text{-}\lim_n a_nx=0$ and, with a similar reasoning, $(a_nx: n \in \omega)$ is not $\mathcal{I}^\star$-convergent. Therefore $x \in \mathsf H_{\bm a}(\mathcal I)\setminus \mathsf H^\star_{\bm a}(\mathcal I)$. 
\end{proof}

\medskip

\begin{proof}
[Proof of Theorem \ref{thm:upperboundscomplexity}]
First, observe that 
$$
\mathsf{H}_{\bm{a}}(\mathcal{I})=\bigcap_{k\in\omega}\left\{x\in\mathbb{T}: \{n\in\omega: \|a_nx\|> 2^{-k} \}\in\I\right\}=\bigcap_{k\in\omega}\varphi^{-1}_k[\I],
$$
where $\varphi_k:\mathbb{T}\to\mathcal{P}(\omega)$ is given by $\varphi_k(x):=\{n\in\omega: \|a_nx\|> 2^{-k} \}$ for each $k \in \omega$. 

\begin{claim}\label{claim:Borel1} Each $\varphi_k$ is of Borel class $1$, that is, $\varphi_k^{-1}[U]$ is $F_\sigma$ for every open $U\subseteq\mathcal{P}(\omega)$.
\end{claim}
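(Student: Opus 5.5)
We need to show that for every open $U\subseteq\mathcal{P}(\omega)$ the set $\varphi_k^{-1}[U]$ is $F_\sigma$ in $\mathbb{T}$. Since $\mathcal{P}(\omega)$ is second countable, every open set is a countable union of basic clopen sets of the form
$$
[F,G]:=\{A\subseteq\omega: F\subseteq A,\ G\cap A=\emptyset\},
$$
with $F,G$ finite and disjoint. Preimages commute with countable unions, and a countable union of $F_\sigma$ sets is $F_\sigma$. So it suffices to show that $\varphi_k^{-1}[[F,G]]$ is $F_\sigma$ for each such pair $F,G$.

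Next we unwind the definition:
$$
\varphi_k^{-1}[[F,G]]=\bigcap_{n\in F}\{x\in\mathbb{T}:\|a_nx\|>2^{-k}\}\ \cap\ \bigcap_{n\in G}\{x\in\mathbb{T}:\|a_nx\|\le 2^{-k}\}.
$$
The map $x\mapsto \|a_nx\|$ is continuous on $\mathbb{T}$, being the composition of the character $x\mapsto a_nx$ with the continuous function $\|\cdot\|$. Consequently each set $\{x:\|a_nx\|>2^{-k}\}$ is open and each set $\{x:\|a_nx\|\le 2^{-k}\}$ is closed.

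Hence $\varphi_k^{-1}[[F,G]]$ is the intersection of a finite open set $O$ with a closed set $K$. In the metric space $\mathbb{T}$ every open set is $F_\sigma$: write $O=\bigcup_j K_j$ with each $K_j$ closed. Then $O\cap K=\bigcup_j (K_j\cap K)$ is $F_\sigma$, which proves the claim.

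There is no genuine obstacle here. The only point needing care is the strict versus non-strict inequality: the condition $n\in\varphi_k(x)$ defines an open set, while $n\notin\varphi_k(x)$ defines a closed set. This is exactly why $\varphi_k$ is of Borel class $1$ rather than continuous.
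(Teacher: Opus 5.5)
Your proof is correct and follows the paper's approach: the preimages of the subbasic sets $\{A\subseteq\omega:m\in A\}$ and $\{A\subseteq\omega:m\notin A\}$ are open and closed, respectively, and you make explicit the steps the paper leaves implicit (finite intersections of these form a countable base, each basic preimage is an open set intersected with a closed set and hence $F_\sigma$, and countable unions preserve $F_\sigma$). The only blemish is the phrase ``a finite open set $O$'', which should read ``an open set $O$ (being a finite intersection of open sets)''.
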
 
\begin{proof}
Fix any $m\in\omega$ and notice that
\[
\varphi^{-1}_k[\{A\subseteq\omega:m\notin A\}]=\{x\in\mathbb{T}:\|a_mx\|\leq 2^{-k}\}
\]
is closed in $\mathbb{T}$, while
\[
\varphi^{-1}_k[\{A\subseteq\omega:m\in A\}]=\{x\in\mathbb{T}:\|a_mx\|> 2^{-k}\}
\]
is open in $\mathbb{T}$. The claim follows from the fact that sets of the type $\{A\subseteq\omega:m\notin A\}$ and $\{A\subseteq\omega:m\in A\}$ constitute a subbase of the topology of $\mathcal{P}(\omega)$.
\end{proof}

\medskip

\ref{item:1upperbound}. Let $(C_i: i \in \omega)$ be a sequence of hereditary compact subsets of $\mathcal{P}(\omega)$ which satisfies \eqref{eq:Farahrepresentation}. Now, fix $i,k\in \omega$ define
$C_{i,n}:=\{A\subseteq\omega:A\setminus n\in C_i\}$ for each $n \in \omega$. 
Then every $C_{i,n}$ is also a hereditary compact subset of $\mathcal P(\omega)$, because the map $A\mapsto A\setminus n$ is continuous and $C_i$ is hereditary compact. 
It follows that there is a family $\mathcal B\subseteq\mathrm{Fin}$ such that
$C_{i,n}=\bigcap_{F\in\mathcal B}\{A\subseteq\omega:F\not\subseteq A\}$. Therefore
\[
\varphi_k^{-1}[C_{i,n}]
=
\bigcap_{F\in\mathcal B}\bigcup_{m\in F}
\{x\in\mathbb T:m\notin\varphi_k(x)\}.
\]
For every $m\in\omega$, the set $\{x\in\mathbb T:m\notin\varphi_k(x)\}$ is equal to
$\{x\in\mathbb T:\|a_mx\|\leq 2^{-k}\}$, hence it is closed. Since $F$ is finite, it follows that 
$\varphi_k^{-1}[C_{i,n}]$ is closed in $\mathbb{T}$. 

Using \eqref{eq:Farahrepresentation}, we conclude that 
$$
\mathsf H_{\bm a}(\I)
=\bigcap_{k\in\omega}\varphi_k^{-1}[\I]
=\bigcap_{k\in\omega}\bigcap_{i\in\omega}\bigcup_{n\in\omega}\varphi_k^{-1}[C_{i,n}]
$$
is an $F_{\sigma\delta}$ subgroup. 

\medskip

\ref{item:2upperbound}. Thanks to Claim \ref{claim:Borel1}, if $\mathcal{I}$ is a $\Pi^0_\alpha$ ideal then by transfinite induction $\varphi_k^{-1}[\I]$ is $\Pi^0_{\alpha+1}$ for each $k \in \omega$. Hence, it follows that $\mathsf H_{\bm a}(\I)
=\bigcap_{k}\varphi_k^{-1}[\I]$ is $\Pi^0_{\alpha+1}$ as well. 

\medskip

\ref{item:3upperbound}. It follows from the item \ref{item:2upperbound} since every $\Sigma^0_\alpha$ subset of $\mathbb{T}$ is $\Pi^0_{\alpha+1}$. 

\medskip

\ref{item:4upperbound}. It follows from the identity $\mathsf H_{\bm a}(\I)
=\bigcap_{k}\varphi_k^{-1}[\I]$, Claim \ref{claim:Borel1}, and the fact that the classes of Borel, analytic and co-analytic sets are stable under Borel preimages and countable intersections. 

\medskip

\ref{item:5upperbound}. We will show that
$\mathsf H^\star_{\bm a}(\I)$ is the projection of an analytic subset of
$\mathbb T\times\mathcal P(\omega)$. 
\begin{claim}\label{claim:RBorel}
$R:=\{(x,S)\in\mathbb T\times\mathcal P(\omega):
\forall_{k\in\omega}\ \varphi_k(x)\setminus S\in\mathrm{Fin}\}$ is Borel.
\end{claim}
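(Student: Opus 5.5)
The plan is to write $R$ as a countable intersection over $k\in\omega$ of sets $R_k:=\{(x,S):\varphi_k(x)\setminus S\in\mathrm{Fin}\}$. Since countable intersections of Borel sets are Borel, it suffices to show that each $R_k$ is Borel. The first step is to unfold the finiteness condition:
$$
R_k=\bigcup_{N\in\omega}\bigcap_{m\ge N}\left(\{(x,S): m\notin\varphi_k(x)\}\cup\{(x,S): m\in S\}\right).
$$
This holds because $\varphi_k(x)\setminus S$ is finite if and only if there exists $N$ such that every $m\ge N$ lying in $\varphi_k(x)$ also lies in $S$.

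The second step is to identify the complexity of the basic pieces. By the computation in Claim \ref{claim:Borel1}, $\{x: m\notin\varphi_k(x)\}=\{x:\|a_mx\|\le 2^{-k}\}$ is closed in $\mathbb T$. Its cylinder $\{(x,S):m\notin\varphi_k(x)\}$ is therefore closed in $\mathbb T\times\mathcal P(\omega)$. Likewise $\{(x,S):m\in S\}$ is clopen, being the preimage of a basic clopen set of $\mathcal P(\omega)$ under the continuous projection.

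It follows that each union inside the intersection is closed. The intersection over $m\ge N$ is then closed, and the union over $N$ is $F_\sigma$. Hence each $R_k$ is $F_\sigma$, and $R=\bigcap_k R_k$ is $F_{\sigma\delta}$, in particular Borel.

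There is no real obstacle here. The only point needing care is the pointwise rewriting of ``$\varphi_k(x)\setminus S$ is finite'' with the correct quantifier order: $\exists N\,\forall m\ge N$. Once this is in place, the complexity count follows directly from Claim \ref{claim:Borel1} and the continuity of the coordinate maps on $\mathcal P(\omega)$.
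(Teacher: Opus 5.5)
Your proposal is correct and follows essentially the same route as the paper. The paper likewise writes each set $\{(x,S):\varphi_k(x)\setminus S\in\mathrm{Fin}\}$ as $\bigcup_N\bigcap_{n\ge N}$ of a clopen set union a closed set, hence $F_\sigma$, and concludes that $R$ is $F_{\sigma\delta}$.
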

\begin{proof}
It is enough to prove that, for every $k\in\omega$, the set 
$$
\{(x,S):\varphi_k(x)\setminus S\in\mathrm{Fin}\}
=
\bigcup_{N\in\omega}\bigcap_{n\geq N}
\left(\{(x,S):n\in S\}\cup\{(x,S):\|a_nx\|\leq 2^{-k}\}\right).
$$
is $F_\sigma$. Indeed, 
for each fixed $n\in\omega$, the set $\{(x,S):n\in S\}$ is clopen in
$\mathbb T\times\mathcal P(\omega)$, while
$\{(x,S):\|a_nx\|\leq 2^{-k}\}$ is closed. Thus, the set in the right-hand side above is
$F_\sigma$. Therefore $R$ is $F_{\sigma\delta}$, hence Borel.
\end{proof}
Now, observe that
$$
\mathsf H^\star_{\bm a}(\I)
=
\{x\in\mathbb T:\exists_{S\in\I}\ (x,S)\in R\}.
$$
In fact, $x\in\mathsf H^\star_{\bm a}(\I)$ if and only if there is $S\in\I$ such that
$\lim_{n\in\omega\setminus S}a_nx=0$, and this is equivalent to saying that
$\varphi_k(x)\setminus S$ is finite for every $k\in\omega$.

Since $\I$ is analytic, the set $\mathbb T\times\I$ is analytic in
$\mathbb T\times\mathcal P(\omega)$. It follows by Claim \ref{claim:RBorel} that 
$R\cap(\mathbb T\times\I)$ is analytic, and its projection on the first coordinate is
analytic as well. Therefore $\mathsf H^\star_{\bm a}(\I)$ is analytic.
\end{proof}

\medskip

\begin{proof}
[Proof of Theorem \ref{thm:RKinclusion}]
\ref{item:1RK} Pick a map $f:\omega\to\omega$ such that 
$f^{-1}[A]\in\J$ if and only if $A\in\I$ for all $A\subseteq\omega$. Fix any $\bm{a} \in \mathbb{Z}^\omega$ and define $\bm{b} \in \mathbb{Z}^\omega$ by $b_n:=a_{f(n)}$ for all $n\in\omega$. Then it is enough to show that $\mathsf{H}_{\bm{a}}(\I)=\mathsf{H}_{\bm{b}}(\J)$. 

In fact, pick an arbitrary $x\in\mathbb{T}$, and observe that 
\[
\{n\in\omega: \|b_nx\|\geq\varepsilon\}=\{n\in\omega: \|a_{f(n)}x\|\geq\varepsilon\}=f^{-1}[\{n\in\omega: \|a_nx\|\geq\varepsilon\}]
\]
for all $\varepsilon>0$. Therefore $x\in \mathsf{H}_{\bm{a}}(\I)$ if and only if $x\in \mathsf{H}_{\bm{b}}(\J)$.

\medskip

\ref{item:2RK} Pick a finite-to-one map $f:\omega\to\omega$ such that 
$f^{-1}[A]\in\J$ if and only if $A\in\I$ for all $A\subseteq\omega$. Fix a $\bm{a} \in \mathbb{Z}^\omega$. As in the previous point, define $\bm{b} \in \mathbb{Z}^\omega$ by $b_n:=a_{f(n)}$ for all $n\in\omega$; similarly, we claim that $\mathsf{H}^\star_{\bm{a}}(\I)=\mathsf{H}^\star_{\bm{b}}(\J)$. 

First, pick $x \in \mathsf{H}^\star_{\bm{a}}(\I)$. Then there exists
$S \in \mathcal{I}$ such that $\lim_{n \in \omega\setminus S}a_nx=0$. Let
$T:=f^{-1}[S]$. Since $S\in\mathcal I$, we have $T\in\mathcal J$. 
We claim that $\lim_{n\in\omega\setminus T}b_nx=0$. To this aim, fix $\varepsilon>0$. There is
$M\in\omega$ such that $\|a_mx\|<\varepsilon$ for all $m\in\omega\setminus S$ with
$m\geq M$. Since $f$ is finite-to-one, the set $f^{-1}[\{0,\ldots,M-1\}]$ is finite.
Thus there is $N\in\omega$ such that $f(n)\geq M$ for all $n\geq N$. Now, if
$n\in\omega\setminus T$ and $n\geq N$, then $f(n)\notin S$ and $f(n)\geq M$, hence 
$\|b_nx\|=\|a_{f(n)}x\|<\varepsilon$.  
This proves that $x\in \mathsf{H}^\star_{\bm{b}}(\J)$. Hence $\mathsf{H}^\star_{\bm{a}}(\I)\subseteq \mathsf{H}^\star_{\bm{b}}(\J)$. 

Conversely, pick $x\in \mathsf{H}^\star_{\bm{b}}(\J)$. Then there exists
$T\in\mathcal J$ such that $\lim_{n\in\omega\setminus T}b_nx=0$. Define 
$$
S:=\{m\in\omega:f^{-1}[\{m\}]\subseteq T\}.
$$ 
Observe that $f^{-1}[S]\subseteq T \in \mathcal{J}$, hence $S\in\mathcal I$ by the choice of $f$. We claim that $\lim_{m\in\omega\setminus S}a_mx=0$. Fix $\varepsilon>0$. Since
$\lim_{n\in\omega\setminus T}b_nx=0$, there is $N\in\omega$ such that
$\|b_nx\|<\varepsilon$ for all $n\in\omega\setminus T$ with $n\geq N$. Let
$E:=f[\{0,\ldots,N-1\}]$, and choose $M\in\omega$ such that $E\subseteq\{0,\ldots,M-1\}$.
Now, fix $m\in\omega\setminus S$ with $m\geq M$. Since $m\notin S$, there is
$n\in f^{-1}[\{m\}]\setminus T$. Since $m\notin E$, necessarily $n\geq N$. Hence 
$
\|a_mx\|=\|a_{f(n)}x\|=\|b_nx\|<\varepsilon.
$ 
This proves that $\lim_{m\in\omega\setminus S}a_mx=0$. Since $S\in\mathcal I$, we
obtain $x\in \mathsf{H}^\star_{\bm{a}}(\I)$. 
Therefore $\mathsf{H}^\star_{\bm{a}}(\I)=\mathsf{H}^\star_{\bm{b}}(\J)$, which completes the proof. 
\end{proof}

\medskip

\begin{proof}
[Proof of Corollary \ref{RB:corollary}] 
    The first inclusion $\mathsf{Sub}_{\le \omega}\subseteq \mathscr{H}(\mathrm{Fin})$ is well known \cite{MR1877772}.
    For the second one, it follows from Talagrand's characterization of meager ideals that $\mathrm{Fin} \le_{\mathrm{RB}} \mathcal{I}$, see \cite[Theorem 2.1]{MR579439},  
    hence also $\mathrm{Fin} \le_{\mathrm{RK}} \mathcal{I}$. Taking into account that $\mathrm{Fin}$ is a $P$-ideal, we conclude by Theorems \ref{thm:Pproperty} and \ref{thm:RKinclusion} that $\mathscr{H}^\star(\mathrm{Fin})=\mathscr{H}(\mathrm{Fin})\subseteq \mathscr{H}(\mathcal{I})\cap  \mathscr{H}^\star(\mathcal{I})$.
\end{proof}

\medskip

\begin{proof}
[Proof of Lemma \ref{lem:basicIHa}] 
    \ref{item:1basiclemma}. It follows from the fact that $N_{\bm{a}}(\alpha,\varepsilon)=N_{\bm{b}}(\alpha,\varepsilon)\cap \mathrm{ran}(\bm{a})$ for all $\alpha \in \mathbb{T}$ and $\varepsilon>0$. 

    \ref{item:2basiclemma}. It follows from Definition \ref{def:IHa}. 

    \ref{item:3basiclemma}. It follows from item \ref{item:2basiclemma}.
\end{proof}

\medskip

\begin{proof}
[Proof of Theorem \ref{thm:beigblock_representation}] 
Set $Z:=\mathrm{ran}(\bm{a})$ and notice that a point $x \in \mathbb{T}$ belongs to $\mathsf{H}_{Z}(\mathcal{I}_{H,\bm{a}})$ if and only if $N_{\bm{a}}(x,\varepsilon)\in \mathcal{I}_{H,\bm{a}}$ for all $\varepsilon>0$. 

Observe that 
\begin{equation}\label{eq:equalityrepresentations}
\mathsf{H}_{\bm{a}}(\mathcal{J}_{H,\bm{a}})=\mathsf{H}_{Z}(\mathcal{I}_{H,\bm{a}}).
\end{equation}
Indeed, for every \(x\in\mathbb T\) and every \(\varepsilon>0\), we have $
\{n\in\omega:\|a_nx\|\geq\varepsilon\}\in\mathcal J_{H,\bm a}$ 
if and only if $\{a_n: \|a_nx\|\geq\varepsilon\}\in\mathcal I_{H,\bm a}$, and the latter set is exactly $N_{\bm a}(x,\varepsilon)=\{z\in Z:\|zx\|\geq\varepsilon\}$. 

Hence it will be enough to show that $H=\mathsf{H}_{Z}(\mathcal{I}_{H,\bm{a}})$.

First, pick $x\in H$. 
Then, for every $\varepsilon>0$, the set $N_{\bm{a}}(x,\varepsilon)$ belongs to $\mathcal{I}_{H,\bm{a}}$ by definition. 
Hence $x\in \mathsf{H}_{Z}(\mathcal{I}_{H,\bm{a}})$, and therefore $H\subseteq \mathsf{H}_{Z}(\mathcal{I}_{H,\bm{a}})$.

The converse inclusion is obvious if $H=\mathbb{T}$. Otherwise, fix $x\in\mathbb T\setminus H$. 
To conclude the proof (and, hence, to obtain the desired representation), it will be enough to show that 
$
N_{\bm{a}}(x,\nicefrac14)\notin \mathcal{I}_{H,\bm{a}}.
$ 
In fact, suppose for the sake of contradiction that $N_{\bm{a}}(x,\nicefrac14)\in \mathcal{I}_{H,\bm{a}}$, that is, 
\begin{equation}\label{eq:contralkjlsjg}
\exists_{k,m,N \in \omega}
\exists_{\alpha_0, \ldots,\alpha_{k-1} \in H} 
\forall_{n\in N_{\bm{a}}(x,\nicefrac14)\setminus[-N,N]} 
\exists_{i\in k}\quad  
\|n\alpha_i\|\geq 2^{-m}.
\end{equation}
Observe that $x\notin H$, hence $x\notin \langle \alpha_0,\ldots,\alpha_{k-1}\rangle$. In addition, $Z$ is thick, hence $Z\setminus [-N,N]$ is thick as well. It follows from Theorem \ref{thm:hardapproximation} that there exists $n\in Z\setminus[-N,N]$
such that $\|n\alpha_i\|<2^{-m}$ for every $i\in k$ and $\|nx\|>\nicefrac14$. This provides the desired contradiction with \eqref{eq:contralkjlsjg}. Therefore $H=\mathsf{H}_{Z}(\mathcal{I}_{H,\bm{a}})$, which completes the proof. 
\end{proof}

\medskip

\begin{proof}
[Proof of Lemma \ref{lem:basicIH}] 
    \ref{item:4basiclemma}. By Definition \ref{def:IHa}, $\mathcal{I}_H$ is generated by sets $N_{\bm{u}}(\alpha,\varepsilon)=\{n \in \omega: \|n\alpha\|\ge \varepsilon\}$ with $\alpha \in H$ and $\varepsilon>0$. If $\alpha=0$ then $N_{\bm{u}}(\alpha,\varepsilon)$ is empty and if $\alpha\in H\setminus\{0\}$ then $N_{\bm{u}}(\alpha,\varepsilon)$ admits positive asymptotic density, cf. Example \ref{example:singletonzero}. 

    \ref{item:5basiclemma}. This is clear by Definition \ref{def:IHa}. 

    \ref{item:6basiclemma}. If $x=\nicefrac{p}{q}\in\mathbb{Q}\setminus \mathbb{Z}$ with $\mathrm{gcd}(p,q)=1$, then $\I_{\langle x\rangle}\restriction q\omega$ is isomorphic to $\Fin$, while $\omega\setminus q\omega=\{k\in\omega:\|k x\|\geq\nicefrac{1}{2q}\} \in\I_{\langle x\rangle}$.

    \ref{item:7basiclemma}. If $x\notin\mathbb{Q}$ then $\I_{\langle x\rangle}$ is generated by sets $A_n:=N_{\bm{u}}(x,2^{-n})=\{k\in\omega:\|k x\|\geq 2^{-n}\}$ and each set $A_{n+1}\setminus A_n$ is infinite. Hence $A\in\I_{\langle x\rangle}$ if and only if $A$ intersects finitely many infinite sets $A_{n+1}\setminus A_n$.
\end{proof}

\medskip

\begin{proof}
[Proof of Proposition \ref{prop:HFsigmaIHFsigma}] 
Set $Z:=\mathrm{ran}(\bm{a})$. For each $k,m,N \in \omega$ and 
$K\subseteq \mathbb{T}$, define 
$$
C_{k,m,N}(K):=
\left\{
S\subseteq Z :
\exists_{\alpha_0,\ldots,\alpha_{k-1}\in K}\,
\forall_{n\in S\setminus[-N,N]}\,
\exists_{i\in k}, \,\,
\|n\alpha_i\|\ge 2^{-m}
\right\},
$$
and 
$$
D_{k,m,N}(K):=
\left\{
(S,\alpha_0,\ldots,\alpha_{k-1})\in 2^{Z}\times K^k: 
\forall_{n\in S\setminus[-N,N]}\,
\exists_{i\in k},\,\,
\|n\alpha_i\|\ge 2^{-m}
\right\},
$$
with the convention that $C_{0,m,N}(K):=\emptyset$ and $D_{0,m,N}(K):=\emptyset$. 

Observe that if $K$ is compact, then $D_{k,m,N}(K)$ is compact for each $k,m,N \in \omega$. In fact, suppose $k\neq 0$. Then its complement consists of those $(S,\alpha_0,\ldots,\alpha_{k-1})$ for which there is some $z\in Z$ such that
$z\in S\setminus[-N,N]$ 
and $\|z\alpha_i\|<2^{-m}$ for all $i \in k$. 
For each fixed $z$, the condition $z\in S$ is clopen in $2^{Z}$, and the condition
$$
\|z\alpha_i\|<2^{-m}
$$
is open in $K^k$ for all $i \in k$. Therefore the complement of $D_{k,m,N}(K)$ is open, which proves that $D_{k,m,N}(K)$ is closed.  Since $2^{Z}\times K^k$ is compact, $D_{k,m,N}(K)$ is compact as well. 

\medskip

\ref{item:1complexIH}. Let $(K_p: p \in \omega)$ be an increasing sequence of compact sets in $\mathbb{T}$ for which $H=\bigcup_{p\in \omega} K_p$. At this point, observe that 
$$
\mathcal{I}_{H,\bm{a}}
=\bigcup_{k,m,N,p \in\omega}C_{k,m,N}(K_p).
$$
Next, we claim that each $C_{k,m,N}(K_p)$ is closed. To this aim, fix $k,m,N,p \in \omega$ with $k\neq 0$. Since 
$D_{k,m,N}(K_p) \subseteq 2^{Z}\times K_p^k$ is compact by the above observation, it is enough to notice that each $C_{k,m,N}(K_p)$ is the projection on the first coordinate of the corresponding compact set $D_{k,m,N}(K_p)$. Therefore $C_{k,m,N}(K_p)$ is compact as well. 

This proves that $\I_{H, \bm{a}}$ is an $F_\sigma$ ideal on $Z$. Hence there are hereditary compact sets $(Q_m: m \in \omega)$ in $2^Z$ such that $\mathcal I_{H,\bm a}=\bigcup_{m}Q_m$. Let $\pi$ be as in Lemma \ref{lem:piBorel}. Taking into account that 
$$
\mathcal J_{H,\bm a}
=
\pi^{-1}[\mathcal I_{H,\bm a}]
=
\bigcup_{m\in\omega}\pi^{-1}[Q_m],
$$
we conclude by Lemma \ref{lem:piBorel} that $\mathcal J_{H,\bm a}$ is $F_\sigma$. 
Hence, it follows from \cite[Lemma 1.2]{MR748847} that $\mathcal J_{H,\bm a}$ is a $P^+$-ideal, cf. also \cite[Corollary 2.7]{MR5030363}.

\medskip

\ref{item:2complexIH}. 
Observe that 
$$
\mathcal{I}_{H,\bm{a}}
=\bigcup_{k,m,N \in\omega}C_{k,m,N}(H).
$$
Next, we claim that each $C_{k,m,N}(H)$ is analytic. To this aim, fix $k,m,N \in \omega$ with $k\neq 0$. By the above observation $D_{k,m,N}(\mathbb{T})$ is compact in $2^{Z}\times \mathbb{T}^k$. 
It follows that $C_{k,m,N}(H)$ is the projection on the first coordinate of the corresponding analytic set $D_{k,m,N}(\mathbb{T}) \cap (2^{Z}\times H^k)$. Therefore $C_{k,m,N}(H)$ is analytic. This proves that $\I_{H, \bm{a}}$ is an analytic ideal on $Z$. Taking into account that 
$
\mathcal J_{H,\bm a}
=\pi^{-1}[\mathcal I_{H,\bm a}]$ and that $\pi$ is Borel by Lemma \ref{lem:piBorel}, we conclude that $\mathcal J_{H,\bm a}$ is analytic. 



\medskip

\ref{item:4complexIH}. 
Set $Z:=\mathrm{ran}(\bm{a})$. 
First, suppose that $H$ is finite, with $g:=|H|\ge 1$. Set $Z_g:=Z\cap g\mathbb Z$. Since $Z$ is thick, $Z_g$ is infinite. We claim that
\[
\mathcal I_{H,\bm a}
=
\{S\subseteq Z:S\cap Z_g\in\mathrm{Fin}\}.
\]
Indeed, if \(S\in\mathcal I_{H,\bm a}\), then \(S\) is covered, modulo a
finite set, by finitely many sets \(N_{\bm a}(\alpha_i,\varepsilon)\) with
\(\alpha_i\in H\). Since \(n\alpha_i=0\) for every \(n\in Z_g\), it follows
that \(S\cap Z_g\) is finite. 
Conversely, if \(S\cap Z_g\) is finite, then S is contained,
modulo a finite set, in \(Z\setminus Z_g\). If \(g=1\), then \(Z_g=Z\), so
\(S\) is finite. If \(g\geq 2\), then $Z\setminus Z_g
\subseteq
N_{\bm a}\left(\nicefrac{1}{g},\nicefrac{1}{g}\right)
\in\mathcal I_{H,\bm a}$. 
Thus \(S\in\mathcal I_{H,\bm a}\), and the claim follows. 
Hence \(\mathcal I_{H,\bm a}\) is isomorphic to \(\mathrm{Fin}\) if \(g=1\),
and to \(\mathrm{Fin}\oplus\mathcal P(\omega)\) if \(g\geq2\). In particular,
\(\mathcal I_{H,\bm a}\) is a \(P\)-ideal.  

\smallskip

Thus, let us suppose hereafter that $H$ is infinite. We split the proof into two cases. 
Assume first that $H$ is not finitely generated. 
Then there is a sequence $(x_j:j\in\omega)$ in $H$ which is not contained in any finitely generated subgroup of $H$.
Define 
$$
A_j:=N_{\bm{a}}(x_j,\nicefrac{1}{4})
\qquad \text{ for all }j \in \omega. 
$$
Clearly $A_j\in \I_{H,\bm{a}}$. Suppose for the sake
of contradiction that $\I_{H,\bm{a}}$ is a $P$-ideal. Then there is $A\in \I_{H,\bm{a}}$ such that
$A_j\setminus A$ is finite for every $j\in\omega$. 
By Definition \ref{def:IHa}, there are $k,m,N\in\omega$ and $\alpha_0,\ldots,\alpha_{k-1}\in H$ such that, for every $n\in A\setminus[-N,N]$, there is $i\in k$ with $\|n\alpha_i\|\geq 2^{-m}$. 
Pick $j\in\omega$ such that $x_j\notin\langle\alpha_0,\ldots,\alpha_{k-1}\rangle$. Since
$Z\setminus([-N,N]\cup(A_j\setminus A))$ is thick, by Theorem \ref{thm:hardapproximation} there exists $n\in Z\setminus([-N,N]\cup(A_j\setminus A))$ such that 
$$
\|n\alpha_i\|<2^{-m}
\quad\text{for every }i=1,\ldots,k
\quad \text{ and }\quad 
\|nx_j\|>\nicefrac{1}{4}.
$$
Hence $n\in A_j$, and since
$n\notin A_j\setminus A$, we have $n\in A$. This contradicts the choice of
$\alpha_0,\ldots,\alpha_{k-1}$, $m$, and $N$.

It remains to consider the case where $H$ is an infinite subgroup which is finitely generated. 
Pick distinct $g_0,\ldots,g_{d-1} \in H$ such that $H=\langle g_0,\ldots,g_{d-1}\rangle$. Define 
$$
B_m:=\{n\in Z:\exists_{i\in d}\ \|ng_i\|\geq 2^{-m}\}
\qquad \text{ for all }m \in \omega.
$$
Then $B_m\in \I_{H,\bm{a}}$ for every $m\in\omega$. Suppose, similarly to before, that $\I_{H,\bm{a}}$ is a $P$-ideal, and choose $B\in \I_{H,\bm{a}}$ such that $B_m\setminus B$ is finite for every $m\in\omega$. 
There are
$r,M,N\in\omega$ and $\beta_0,\ldots,\beta_{r-1}\in H$ such that, for every
$n\in B\setminus[-N,N]$, there is $\ell\in r$ with $\|n\beta_\ell\|\geq2^{-M}$.
For each $\ell\in r$, write $\beta_\ell=\sum_{i\in d}c_{\ell,i}g_i$, with
$c_{\ell,i}\in\mathbb Z$. Choose $\delta>0$ such that, whenever
$\max_{i\in d}\|ng_i\|<\delta$, we have $\|n\beta_\ell\|<2^{-M}$ for every
$\ell\in r$.

Let $y:=(g_0,\ldots,g_{d-1})\in\mathbb T^d$ and set 
$C:=\overline{\{ny:n\in\mathbb Z\}}$. Since $H$ is infinite, the compact subgroup $C$
is infinite and hence $0$ is not isolated in $C$. Therefore, there is $m\in\omega$
such that the relatively open set
$$
U:=\left\{z\in C:2^{-m}<\max_{i\in d}\|z_i\|<\delta\right\}
$$
is nonempty. 
We claim that $R:=\{n\in\mathbb Z:ny\in U\}$ is syndetic. We proceed as in the proof of Theorem \ref{thm:hardapproximation}: since $U$ is a nonempty relatively open subset of $C$, choose $p\in\mathbb Z$ and an open neighbourhood $W$ of $0$ in $C$ such that $py+W\subseteq U$. 
Since $\{ny:n\in\mathbb Z\}$ is dense in $C$, the family $\{qy+W:q\in\mathbb Z\}$ covers $C$. 
By compactness, there are $q_0,\ldots,q_{\ell-1}\in\mathbb Z$ such that $C\subseteq \bigcup_{j\in \ell}(q_jy+W)$. 
Fix $q\in\mathbb Z$. Then $qy\in q_jy+W$ for some $j\in \ell$, and so $(q-q_j)y\in W$. Hence $(p+q-q_j)y\in py+W\subseteq U$, that is, $p+q-q_j\in R$. Therefore $q\in R+\{q_j-p:j<\ell\}$.  
Thus, $\mathbb Z=R+\{q_j-p:j<\ell\}$ and $R$ is syndetic.

Since $Z$ is thick, it follows that $Z\cap R\neq\emptyset$. 
Moreover, $Z\cap R$ is infinite, because deleting finitely many points from $Z$ leaves a thick set. 
%
Now, for every $n\in Z\cap R$, we have $n\in B_m$ and
$\max_{i\in d}\|ng_i\|<\delta$, thus $\|n\beta_\ell\|<2^{-M}$ for every
$\ell\in r$. Hence, for all but finitely many such $n$ we have
$n\notin B$. Thus $B_m\setminus B$ is infinite, contradicting the choice of $B$.

Therefore $\I_{H,\bm{a}}$ is not a $P$-ideal whenever $H$ is infinite.

\medskip

\ref{item:5complexIH}. This follows from item \ref{item:4complexIH} and Lemma \ref{lem:charactePideal}, by choosing the finite-to-one map $f: \omega\to Z$ given by $f(n):=a_n$ for all $n \in \omega$. 
\end{proof}

\medskip

\begin{proof}
[Proof of Corollary \ref{cor:notPolishable}]
    It follows from \cite[Theorem 2]{MR2388789} that there exists an $F_\sigma$ subgroup $H$ of $\mathbb{T}$ which is not Polishable, cf. also \cite[Theorem 1.20]{MR3461178}. Thanks to Proposition \ref{prop:HFsigmaIHFsigma}, $\mathcal{I}_H$ is an $F_\sigma$ ideal on $\mathrm{ran}(\bm{u})=\omega$. We conclude by Theorem \ref{thm:beigblock_representation} that $H$ is $\mathcal{I}_H$-characterized. 
\end{proof}

\medskip

\begin{proof}
[Proof of Proposition \ref{prop:basiccase}] 
    The \textsc{If} part is obvious. To show the \textsc{Only If} part, pick a sequence $\bm{a}$ with  $\mathrm{supp}(\bm{a})$ infinite and suppose for the sake of contradiction that $\mathsf{H}_{\bm{a}}(\mathrm{Fin})=\mathbb{T}$. It follows from Lemma \ref{lem:almostobvious} that $\bm{a}$ cannot be bounded. Since $\bm{a}$ is unbounded, it is possible to find a suitable subsequence $\bm{b}$ such that $\lim_n b_{n+1}/b_n=\infty$. It follows from Lemma \ref{lem:obviousinclusion} that $\mathsf{H}_{\bm{b}}(\mathrm{Fin})=\mathbb{T}$. Lastly, pick $c \in \mathbb{T}\setminus \{0\}$ and let $(t_n: n\in \omega)\in \mathbb{T}^\omega$ be the constant sequence $(c,c,\ldots)$. It follows from Proposition \ref{prop:interpolation} that there exists $x \in \mathbb{T}$ such that $\lim_nb_nx=c$. However, this contradicts that $\mathsf{H}_{\bm{b}}(\mathrm{Fin})=\mathbb{T}$. 
\end{proof}

\medskip

\begin{proof}
[Proof of Proposition \ref{prop:necessaryfullT}] 
Fix a subset $S\subseteq \mathrm{supp}(\bm{a})$ with $S\in\mathcal I^+$, and let $(q_n: n \in \omega)$ be the increasing enumeration of $(|a_n|: n \in S)$. 
Observe that, for every positive integer $q$, the set
$A_q:=\{n\in\omega: |a_n|=q\}$ belongs to $\mathcal I$. Indeed, if $A_q\in\mathcal I^+$,
then, letting $x:=1/(2q)$, we would have $\|a_nx\|=\nicefrac{1}{2}$ for every $n\in A_q$. Hence
$A_q\subseteq\{n\in\omega:\|a_nx\|\geq \nicefrac{1}{4}\}\notin\mathcal I$, contradicting
$x\in\mathsf H_{\bm a}(\mathcal I)=\mathbb T$. 

In particular, $\{|a_n|:n\in S\}$ is infinite. 
Now, let us suppose for the sake of contradiction that $\liminf_j q_{j+1}/q_j>1$. Then there are
$\lambda>1$ and $j_0\in\omega$ such that $q_{j+1}/q_j\geq\lambda$ for all $j\geq j_0$.
Observe that 
$$
S_1:=\{n\in S: |a_n|=q_j\text{ for some }j\geq j_0\}\in \mathcal{I}^+,
$$
and pick $r\geq1$ such that $\lambda^r>5$. 
Define 
$$
R_s:=\{n\in S_1: |a_n|=q_{j_0+s+tr}\text{ for some }t\in\omega\}
\qquad \text{ for all }s \in r.
$$
Since
$S_1=\bigcup_{s \in r}R_s$, there is $s \in r$ such that $R_s\in\mathcal I^+$.

At this point, define $b_t:=q_{j_0+s+tr}$ for all $t\in\omega$, so that $b_{t+1}/b_t\geq\lambda^r>5$ for
all $t\in\omega$. 
To complete the proof, it will be enough to show that that there exists $x\in\mathbb T$ such that
$\|b_tx\|\geq \nicefrac{1}{4}$ for every $t\in\omega$ (which would contradict $x\in\mathsf H_{\bm a}(\mathcal I)=\mathbb T$ as before). 

To this end, define $I_0:=[1/(4b_0),1/(2b_0)]$. Suppose that $I_t$ has been defined for some $t \in \omega$ so that $|I_t|=1/(4b_t)$ and $\|b_ty\|\geq \nicefrac{1}{4}$ for all $y\in I_t$. 
Since
$|I_t|=1/(4b_t)>5/(4b_{t+1})$, the interval $I_t$ contains some interval of the form
$[(\ell+\nicefrac{1}{4})/b_{t+1},(\ell+\nicefrac{1}{2})/b_{t+1}]$, with $\ell\in\mathbb Z$. Let this interval
be $I_{t+1}$. It follows by construction that $(I_k: k \in \omega)$ is a decreasing sequence of compact intervals of $\mathbb{T}$. By compactness it is possible to choose $x \in \bigcap_k I_k$. Therefore $\|b_tx\|\geq \nicefrac{1}{4}$ for every $t\in\omega$. This completes the proof. 
\end{proof}

\medskip

\begin{proof}
[Proof of Theorem \ref{thm:1P^-}]
The \textsc{If} part is obvious. To show the \textsc{Only If} part, suppose that $S:=\mathrm{supp}(\bm a)\in\mathcal I^+$. We shall prove that $\mathsf H_{\bm a}(\mathcal I)\neq\mathbb T$. There are two cases.

\medskip

\textsc{Case 1.} 
Assume that $\{n\in\omega:a_n=k\}\in\mathcal I^+$ for some $k\in\mathbb Z\setminus\{0\}$. 
Define $x:=\nicefrac{1}{2k}\in\mathbb T$. 
Then, for every $n\in\omega$ with $a_n=k$, we have $\|a_nx\|=\nicefrac{1}{2}$. 
Hence $\{n\in\omega:\|a_nx\|\geq \nicefrac{1}{4}\}$ contains $\{n\in\omega:a_n=k\}\in\mathcal I^+$. 
Therefore $x\notin\mathsf H_{\bm a}(\mathcal I)$, so  $\mathsf H_{\bm a}(\mathcal I)\neq\mathbb T$. 

\medskip

\textsc{Case 2.} 
Assume that $\{n\in\omega:a_n=k\}\in\mathcal I$ for every
$k\in\mathbb Z\setminus\{0\}$. 
Observe that we have $\{n\in S:|a_n|<m\} \in \I$ for every $m\in\omega$. 
Thus, with respect to the restricted ideal $\mathcal{J}:=\mathcal I\restriction S$, we have $\mathcal{J}\text{-}\lim_{n\in S}|a_n|=\infty$. 

Notice that $\mathcal{ED}\not\leq_{\mathrm{K}}\mathcal{J}\restriction A$ for every $A\in\mathcal{J}^+$.  
Applying Lemma \ref{lem:P^-} to $\mathcal J=\mathcal{I}\upharpoonright S$ and to the sequence $(a_n:n\in S)$, we obtain a sequence $(k(n):n\in\omega)$ of elements of $S$ such that $\{k(n):n\in\omega\}\in\mathcal I^+$ and, after possibly replacing $\bm a$ by $-\bm a$ on this set, the integers $a_{k(n)}$ are positive and
$$
\lim_{n\to \infty}\frac{a_{k(n+1)}}{a_{k(n)}}=\infty.
$$
Equivalently, the positive sequence $b_n:=|a_{k(n)}|$ satisfies $\lim_n b_{n+1}/b_n=\infty$. 

By Proposition \ref{prop:interpolation}, applied to the sequence $(b_n:n\in\omega)$ and to the constant sequence $\nicefrac{1}{2}$, there exists $x\in\mathbb T$ such that $\lim_n \|b_nx-\nicefrac{1}{2}\|=0$. 
Hence $\lim_n \|a_{k(n)}x\|= \nicefrac{1}{2}$. In particular, there exists a finite set $F \in \mathrm{Fin}$ such that $\{n\in\omega:\|a_nx\|\geq \nicefrac{1}{4}\}$ contains $\{k(n):n\in\omega\}\setminus F \in \mathcal{I}^+$. 
Therefore $x\notin\mathsf H_{\bm a}(\mathcal I)$, and again $\mathsf H_{\bm a}(\mathcal I)\neq\mathbb T$.
\end{proof}

\medskip

\begin{proof}
[Proof of Corollary \ref{cor:1P^-A}]
Let $\mathcal I$ be a nowhere tall ideal. 
Let us show that $\mathcal{I}$ satisfies \eqref{eq:technicalP-EDfin}. 
To this aim, fix a set $A_0\in\mathcal I^+$. 
Since $\mathcal I$ is nowhere tall, there is an infinite set $B\subseteq A_0$ such that $B\in\I^+$ and $\mathcal I\restriction B=[B]^{<\omega}$. 
Suppose for the sake of contradiction that $f:A_0\to\omega^2$ witnesses $\mathcal{ED}\leq_{\mathrm{K}}\mathcal I\restriction A_0$.
If $f[B]$ is finite, then $f[B]\in\mathcal{ED}$ and $B\subseteq f^{-1}[f[B]]$, contradicting $f^{-1}[f[B]]\in\mathcal I\restriction A_0$. 
If $f[B]$ is infinite, then, since $\mathcal{ED}$ is tall, there is an infinite set $D\subseteq f[B]$ such that $D\in\mathcal{ED}$. 
Then $f^{-1}[D]\cap B$ is infinite,  so $f^{-1}[D]\notin\mathcal I\restriction A_0$, again a contradiction. 
Hence $\mathcal{ED}\not\leq_{\mathrm{K}} \mathcal I\restriction A_0$.

The conclusion follows from Theorem \ref{thm:1P^-}.
\end{proof}

\medskip

\begin{proof}
[Proof of Corollary \ref{cor:1P^-B}]
 It is known that both $\mathsf{nwd}$ and $\mathsf{null}$ are homogeneous ideals, that is, for every $A\in \mathsf{nwd}^+$, the ideal $\mathsf{nwd}\restriction A$ is isomorphic to $\mathsf{nwd}$, and for
every $A\in \mathsf{null}^+$ the ideal $\mathsf{null}\restriction A$ is isomorphic to $\mathsf{null}$, see \cite{MR1955288}. 
 Moreover, $\mathcal{ED}\not\leq_{\mathrm{K}} \mathsf{nwd}$ and $\mathcal{ED}\not\leq_{\mathrm{K}} \mathsf{null}$, see \cite[Fig. 1]{MR3600759}. Hence both $\mathsf{nwd}$ and $\mathsf{null}$ satisfy condition \eqref{eq:technicalP-EDfin}. The conclusion follows from Theorem \ref{thm:1P^-}. 
\end{proof}

\medskip

\begin{proof}
[Proof of Theorem \ref{thm:2P^-}]
Hereafter, for each $n\in\mathbb Z\setminus\{0\}$ let $\nu_2(n)\in\omega$ be the
$2$-adic valuation of $n$, that is, the largest $k \in \omega$ such that $2^k$ divides
$n$. Now, fix an arbitrary integer sequence $\bm a\in\mathbb Z^\omega$. We need to show that
$$
\mathsf H_{\bm b}(\J)\neq\mathsf H_{\bm a}(\I).
$$

Let us suppose for the sake of contradiction that $\mathsf H_{\bm b}(\J)=\mathsf H_{\bm a}(\I)$. Observe also that $\mathsf H_{\bm b}(\J)\neq\mathbb T$,
since $\nicefrac13\notin\mathsf H_{\bm b}(\J)$. Indeed, $\|\nicefrac{b_n}{3}\|=\nicefrac{1}{3}$ for every
$n\in\omega$. Then also $\mathsf H_{\bm a}(\I)\neq \mathbb{T}$. It follows by the standing hypothesis on $\I$ and Theorem \ref{thm:1P^-} that $\mathrm{supp}(\bm a)\in\I^+$. 

First, suppose that there is
$k\in\omega$ such that
$\{n\in\mathrm{supp}(\bm a):\nu_2(a_n)=k\}\in\I^+$, and define $x:=2^{-k-1}$. Then
$x\notin\mathsf H_{\bm a}(\I)$, because
\[
\{n\in\omega:\|a_nx\|\geq\nicefrac12\}\supseteq
\{n\in\mathrm{supp}(\bm a):\nu_2(a_n)=k\}\in\I^+.
\]
At the same time, $x\in\mathsf H_{\bm b}(\J)$, since
$\{n\in\omega:\|2^nx\|\geq\varepsilon\}$ is finite for every $\varepsilon>0$.
Hence $\mathsf H_{\bm b}(\J)\neq\mathsf H_{\bm a}(\I)$ in this case.

Thus, we can assume that $\mathrm{supp}(\bm a)\in\I^+$ and
$\{n\in\mathrm{supp}(\bm a):\nu_2(a_n)=k\}\in\I$ for every $k\in\omega$. Since
$\mathcal{ED}\not\leq_{\mathrm{K}}\I\restriction \mathrm{supp}(\bm a)$, there is $A\subseteq\mathrm{supp}(\bm a)$ such that $A\in\I^+$ and $|\{n\in A:\nu_2(a_n)=k\}|\leq 1$ for every $k\in\omega$.

Since $\J\not\le_{\mathrm{K}}\I\restriction A$, the map $f:A\to\omega$ given by $f(n):=\nu_2(a_n)$ does not witness $\J\le_{\mathrm{K}}\I\restriction A$. 
Hence there is $B\subseteq A$ such that $B\in\I^+$ and $f[B]\in\J$. 
Let $(k(n):n\in\omega)$ be an enumeration of $B$ such that $f(k(n))\leq f(k(n+1))$ for every $n\in\omega$.

Next, we define an increasing sequence $(j_i:i\in\omega)$ in $\omega$. Let $j_0:=0$.
Suppose that $j_i$ has been defined for some $i \in \omega$. 
Since $\{n\in\omega:f(k(n))\leq j_i\}$ is finite, choose $j_{i+1}>j_i$ such that $j_{i+1}=f(k(m))$ for some $m\in B$ and
$$
|a_{k(n)}|\sum_{\ell>j_{i+1}}\frac{1}{2^{\ell+1}}<\frac14
$$
for every $n\in\omega$ with $f(k(n))\leq j_i$. 
Next, define
$$
F_i:=\{k(n):j_{i-1}<f(k(n))\leq j_i\}
\quad \text{ for every }i\in \omega,
$$
where $j_{-1}:=-1$ by convention. 
Then $(F_i:i\in\omega)$ is a partition of $B$ into finite sets. Since $\mathcal{ED}\not\le_{\mathrm{K}}\I\restriction B$, there is $C\subseteq B$ such that $C\in\I^+$ and $|C\cap F_i|=1$ for every $i\in\omega$. 

Therefore either $C\cap\bigcup_i F_{2i}\in\I^+$ or $C\cap\bigcup_i F_{2i+1}\in\I^+$. 
Without loss of generality, assume that 
$$
D:=C\cap\bigcup_{i\in\omega}F_{2i}\in\I^+.
$$
For every $t\in\omega$, let $m(t)$ be such that
$D\cap F_{2t}=\{k(m(t))\}$.

Define
$$
x:=\sum_{t\in\omega}\frac{1}{2^{f(k(m(t)))+1}}.
$$
First, we will show that $x\notin\mathsf H_{\bm a}(\I)$. Fix $t\in\omega$. Since
$k(m(t))\in F_{2t}$, we have $f(k(m(t)))\leq j_{2t}$. Moreover, if $s>s'>t$, then
$f(k(m(s)))>f(k(m(s')))>j_{2t+1}$. Hence
$$
|a_{k(m(t))}|\sum_{s>t}\frac{1}{2^{f(k(m(s)))+1}}
\leq
|a_{k(m(t))}|\sum_{\ell>j_{2t+1}}\frac{1}{2^{\ell+1}}
<\frac14.
$$
Also, $a_{k(m(t))}\sum_{s<t}2^{-f(k(m(s)))-1}$ is an integer, while
$\|a_{k(m(t))}2^{-f(k(m(t)))-1}\|=\nicefrac12$. Therefore $\|a_{k(m(t))}x\|\geq\nicefrac14$ for every $t\in\omega$. It follows that
$$
\{n\in\omega:\|a_nx\|\geq\nicefrac14\}\supseteq D\in\I^+,
$$
so $x\notin\mathsf H_{\bm a}(\I)$.

It remains to show that $x\in\mathsf H_{\bm b}(\J)$. Note that if $n\in(f(k(m(t-1))),f(k(m(t)))]$ for some $t\in\N$, then 
$$\|2^nx\|=\left\|2^{n-f(k(m(t)))-1} + 2^n\sum_{s>t}\frac{1}{2^{f(k(m(s)))+1}}\right\|$$
and 
$$2^n\sum_{s>t}\frac{1}{2^{f(k(m(s)))+1}}<2^{n-f(k(m(t)))-2}. $$
Therefore, for such $n$ and any $i\in\omega$ we can have $\|2^nx\|\geq 2^{-i}$  only if $f(k(m(t)))-n<i$. 
Since the sequence
$(f(k(m(t))):t\in\omega)$ is strictly increasing and has gaps at least $2$, 
it follows that 
$$
\{n\in\omega:\|2^nx\|\geq2^{-i}\}\subseteq\bigcup_{j \in i}(f[D]-j)
\quad \text{ for all }i\ge 1,
$$
where as usual $f[D]-j:=\{n\in\omega:n+j\in f[D]\}$. Since $f[D]\subseteq f[B]\in\J$ and
$\J$ is translation invariant, the set $\bigcup_{j\in i}(f[D]-j)$ belongs to $\J$ for
every $i\in\omega$. Hence
$\{n\in\omega:\|2^nx\|\geq2^{-i}\}\in\J$ for every $i\in\omega$, which proves that
$x\in\mathsf H_{\bm b}(\J)$.

Thus $\mathsf H_{\bm b}(\J)\neq\mathsf H_{\bm a}(\I)$ for every $\bm a\in\mathbb Z^\omega$. 
Therefore $\mathsf H_{\bm b}(\J)\notin\mathscr H(\I)$.
\end{proof}

\begin{remark}\label{rmk:notinclusion}
    An inspection of the proof of Theorem \ref{thm:2P^-} shows that something stronger holds, namely, $\mathsf{H}_{\bm{b}}(\mathcal{J})$ is not contained in any \emph{proper} $\mathcal{I}$-characterized subgroup of $\mathbb{T}$. 
\end{remark}

\begin{proof}
[Proof of Corollary \ref{cor:Hpowernotfin}] 
By the proof of Corollary \ref{cor:1P^-A}, $\mathcal{I}=\mathrm{Fin}$ is an ideal which satisfies \eqref{eq:technicalP-EDfin}. In addition, if $\mathcal{J}$ is tall then $\mathcal{J}\not\le_{\mathrm{K}} \mathrm{Fin}$. Since $\mathrm{Fin}\restriction A$ is isomorphic to $\mathrm{Fin}$, for every $A\in\mathrm{Fin}^+$, we have $\mathcal{J}\not\le_{\mathrm{K}} \mathrm{Fin}\restriction A$, for every $A\in\mathrm{Fin}^+$. The conclusion follows from Theorem \ref{thm:2P^-}. 
\end{proof}

\medskip

\begin{proof}
[Proof of Corollary \ref{cor:caseZ}] 
As observed in the proof of Corollary \ref{cor:1P^-B} , both $\mathcal{I}=\mathsf{nwd}$ and $\mathcal{I}=\mathsf{null}$ are homogeneous ideals which satisfy \eqref{eq:technicalP-EDfin}. In addition, $\mathcal{J}=\mathcal{Z}$ is a tall translation invariant ideal, and we have $\mathcal{Z}\not\leq_{\mathrm{K}} \mathsf{nwd}$ and $\mathcal{Z}\not\leq_{\mathrm{K}} \mathsf{null}$ by \cite[Fig. 1]{MR3600759}. By homogeneity, condition \eqref{eq:secondtechnicalcondition} also  holds. 
The conclusion follows from Theorem \ref{thm:2P^-}. 
\end{proof}

\medskip

\begin{proof}
    [Proof of Corollary \ref{cor:meager}] 
    The inclusion $\mathscr{H}(\mathrm{Fin})\subseteq \mathscr{H}(\mathcal{J})$ follows from Corollary \ref{RB:corollary}. On the other hand,  $\mathsf{H}_{\bm{b}}(\mathcal{J})\notin \mathscr{H}(\mathrm{Fin})$ by Corollary \ref{cor:Hpowernotfin}. Hence $\mathscr{H}(\mathrm{Fin})\neq \mathscr{H}(\mathcal{J})$. 
\end{proof}



\bibliographystyle{amsplain}

\end{document}